\theoremstyle{plain}
\newtheorem{thm}{Theorem}[section]
\newtheorem{lemme}[thm]{Lemma}
\newtheorem{prop}[thm]{Proposition}
\newtheorem{cor}[thm]{Corollary}
\theoremstyle{definition}
\newtheorem{df}[thm]{Definition}
\theoremstyle{remark}
\newtheorem{rmq}[thm]{Remark}
\numberwithin{equation}{section}
\newcommand{\C}{\mathbb{C}}
\newcommand{\Z}{\mathbb{Z}}
\newcommand{\N}{\mathbb{N}}
\newcommand{\calA}{\mathcal{A}}
\newcommand{\calC}{\mathcal{C}}
\newcommand{\calD}{\mathcal{D}}
\newcommand{\calF}{\mathcal{F}}
\newcommand{\calO}{\mathcal{O}}
\newcommand{\calP}{\mathcal{P}}
\newcommand{\calS}{\mathcal{S}}
\newcommand{\bfh}{\mathbf{h}}
\newcommand{\bfq}{\mathbf{q}}
\newcommand{\bfs}{\mathbf{s}}
\newcommand{\bfz}{\mathbf{z}}
\newcommand{\bfu}{\mathbf{u}}
\newcommand{\Lie}{\mathfrak}
\newcommand{\h}{\mathfrak{h}}
\newcommand{\al}{\alpha}
\newcommand{\vep}{\varepsilon}
\newcommand{\lam}{\lambda}
\newcommand{\sss}{\scriptscriptstyle}
\newcommand{\ra}{\rightarrow}
\newcommand{\leqs}{\leqslant}
\newcommand{\geqs}{\geqslant}
\newcommand{\lra}{\longrightarrow}
\newcommand{\simra}{\overset{\sim}\ra}
\newcommand{\simlra}{\overset{\sim}\lra}
\newcommand{\Hecke}{\mathscr{H}}
\newcommand{\pair}[1]{\langle{#1}\rangle}
\newcommand{\sle}{\widehat{\Lie{sl}}_e}
\newcommand{\Resh}{\sideset{^{\sss\Hecke}}{^W_{W'}}\Res}
\newcommand{\Indh}{\sideset{^{\sss\Hecke}}{^W_{W'}}\Ind}
\newcommand{\coIndh}{\sideset{^{\sss\Hecke}}{^W_{W'}}\coInd}
\DeclareMathOperator{\Hom}{\mathrm{Hom}}
\DeclareMathOperator{\Ind}{\mathrm {Ind}}
\DeclareMathOperator{\coInd}{\mathrm {coInd}}
\DeclareMathOperator{\Res}{\mathrm {Res}}
\DeclareMathOperator{\res}{\mathrm {res}}
\DeclareMathOperator{\soc}{\mathrm {soc}}
\DeclareMathOperator{\head}{\mathrm {head}}
\DeclareMathOperator{\op}{\mathrm{op}}
\DeclareMathOperator{\Id}{\mathrm{Id}}
\DeclareMathOperator{\pr}{\mathrm{pr}}
\DeclareMathOperator{\Proj}{\mathrm{Proj}}
\DeclareMathOperator{\End}{\mathrm{End}}
\DeclareMathOperator{\Fct}{\mathrm{Fct}}
\DeclareMathOperator{\wt}{\mathrm{wt}}
\DeclareMathOperator{\KZ}{\mathrm{KZ}}
\DeclareMathOperator{\eu}{\mathbf{eu}}
\DeclareMathOperator{\modu}{\mathrm{-mod}}
\DeclareMathOperator{\Irr}{\mathrm{Irr}}
\author{Peng Shan}
\address{Universit{\'e} Paris 7, Institut de Math{\'e}matiques de
Jussieu, Th{\'e}orie des groupes et des repr{\'e}sentations, Case
7012, 2 place Jussieu, 75251 Paris Cedex 05, France.}
\email{shan@math.jussieu.fr}
\title[Crystals and DAHA]{Crystals of Fock spaces and cyclotomic rational double affine Hecke algebras}
\thanks{\emph{Titre en fran\c{c}ais.} Cristaux
d'espaces de Fock et alg\`ebres de Hecke doublement affine
rationnelles cyclotomiques.}
\begin{document}

\begin{abstract}

We define the $i$-restriction and $i$-induction functors on the
category $\calO$ of the cyclotomic rational double affine Hecke
algebras. This yields a crystal on the set of isomorphism classes of
simple modules, which is isomorphic to the crystal of a Fock space.
\\
\\
\noindent\textsc{R\'esum\'e.} On d\'efinit les foncteurs de
$i$-restriction et $i$-induction sur la cat\'egorie $\calO$ des
alg\`ebres de Hecke doublement affine rationnelles cyclotomiques.
Ceci donne lieu \`a un cristal sur l'ensemble des classes
d'isomorphismes de modules simples, qui est isomorphe au cristal
d'un espace de Fock.

\end{abstract}

\maketitle

\section*{Introduction}

In \cite{A1}, S. Ariki defined the $i$-restriction and $i$-induction
functors for cyclotomic Hecke algebras. He showed that the
Grothendieck group of the category of finitely generated projective
modules of these algebras admits a module structure over the affine
Lie algebra of type $A^{(1)}$, with the action of Chevalley
generators given by the $i$-restriction and $i$-induction functors.

The restriction and induction functors for rational DAHA's(=double
affine Hecke algebras) were recently defined by R. Bezrukavnikov and
P. Etingof. With these functors, we give an analogue of Ariki's
construction for the category $\calO$ of cyclotomic rational DAHA's:
we show that as a module over the type $A^{(1)}$ affine Lie algebra,
the Grothendieck group of this category is isomorphic to a Fock
space. We also construct a crystal on the set of isomorphism classes
of simple modules in the category $\calO$. It is isomorphic to the
crystal of the Fock space. Recall that this Fock space also enters
in some conjectural description of the decomposition numbers for the
category $\calO$ considered here. See \cite{U}, \cite{Y}, \cite{R}
for related works.

\section*{Notation}

For $A$ an algebra, we will write $A\modu$ for the category of
finitely generated $A$-modules. For $f: A\ra B$ an algebra
homomorphism from $A$ to another algebra $B$ such that $B$ is
finitely generated over $A$, we will write
$$f_\ast: B\modu\ra A\modu$$ for the restriction functor and we write $$f^\ast: A\modu\ra B\modu,\quad M\mapsto B\otimes_AM.$$

A $\C$-linear category $\calA$ is called artinian if the Hom sets
are finite dimensional $\C$-vector spaces and every object has a
finite length. Given an object $M$ in $\calA$, we denote by
$\soc(M)$ (resp. $\head(M)$) the socle (resp. the head) of $M$,
which is the largest semi-simple subobject (quotient) of $M$.

Let $\calC$ be an abelian category. The Grothendieck group of
$\calC$ is the quotient of the free abelian group generated by
objects in $\mathcal{C}$ modulo the relations $M=M'+M''$ for all
objects $M,M',M''$ in $\mathcal{C}$ such that there is an exact
sequence $0\ra M'\ra M\ra M''\ra 0$. Let $K(\mathcal{C})$ denote the
complexified Grothendieck group, a $\C$-vector space. For each
object $M$ in $\mathcal{C}$, let $[M]$ be its class in
$K(\mathcal{C})$. Any exact functor $F: \mathcal{C}\ra\mathcal{C}'$
between two abelian categories induces a vector space homomorphism
$K(\mathcal{C})\ra K(\mathcal{C}')$, which we will denote by $F$
again. Given an algebra $A$ we will abbreviate $K(A)=K(A\modu)$.

Denote by $\Fct(\mathcal{C},\mathcal{C}')$ the category of functors
from a category $\mathcal{C}$ to a category $\mathcal{C}'$. For
$F\in\Fct(\mathcal{C},\mathcal{C}')$ write $\End(F)$ for the ring of
endomorphisms of the functor $F$. We denote by $1_F: F\ra F$ the
identity element in $\End(F)$. Let
$G\in\Fct(\mathcal{C'},\mathcal{C''})$ be a functor from
$\mathcal{C}'$ to another category $\mathcal{C}''$. For any
$X\in\End(F)$ and any $X'\in\End(G)$ we write $X'X:G\circ F\ra
G\circ F$ for the morphism of functors given by
$X'X(M)=X'(F(M))\circ G(X(M))$ for any $M\in\mathcal{C}$.

Let $e\geqs 2$ be an integer and $z$ be a formal parameter. Denote
by $\Lie{sl}_e$ the Lie algebra of traceless $e\times e$ complex
matrices. Write $E_{ij}$ for the elementary matrix with $1$ in the
position $(i,j)$ and $0$ elsewhere. The type $A^{(1)}$ affine Lie
algebra $\widehat{\Lie{sl}}_e$ is
$\Lie{sl}_e\otimes\C[z,z^{-1}]\oplus\C c$ with $c$ a central
element. The Lie bracket is the usual one. We will denote the
Chevalley generators of $\widehat{\Lie{sl}}_e$ as follows:
\begin{eqnarray*}
&e_i=E_{i,i+1}\otimes 1,\quad &f_i=F_{i+1,i}\otimes 1,\quad
h_i=(E_{ii}-E_{i+1,i+1})\otimes 1, \quad 1\leqs i\leqs e-1,\\
&e_0=E_{e1}\otimes z,\quad &f_0=E_{1e}\otimes z^{-1},\quad
h_0=(E_{ee}-E_{11})\otimes 1+c.
\end{eqnarray*}
For $i\in\Z/e\Z$ we will denote the simple root (resp. coroot)
corresponding to $e_i$ by $\al_i$ (resp. $\al\spcheck_i$). The
fundamental weights are $\{\Lambda_i: i\in\Z/e\Z\}$ with
$\al\spcheck_i(\Lambda_j)=\delta_{ij}$ for any $i,j\in\Z/e\Z$. We
will write $P$ for the weight lattice, the free abelian group
generated by the fundamental weights.

\section{Reminders on Hecke algebras, rational DAHA's and restriction
functors}\label{s:reminder}

\subsection{Hecke algebras.}\label{ss:Hecke}

Let $\h$ be a finite dimensional vector space over $\C$. Recall that
a pseudo-reflection is a non trivial element $s$ of $GL(\h)$ which
acts trivially on a hyperplane, called the reflecting hyperplane of
$s$. Let $W\subset GL(\h)$ be a finite subgroup generated by
pseudo-reflections. Let $\mathcal{S}$ be the set of
pseudo-reflections in $W$ and $\mathcal{A}$ be the set of reflecting
hyperplanes. We set $\h_{reg}=\h-\bigcup_{H\in\mathcal{A}}H$, it is
stable under the action of $W$. Fix $x_0\in \h_{reg}$ and identify
it with its image in $\h_{reg}/W$. By definition the braid group
attached to $(W,\h)$, denoted by $B(W,\h)$, is the fundamental group
$\pi_1(\h_{reg}/W, x_0).$

For any $H\in\mathcal{A}$, let $W_H$ be the pointwise stabilizer of
$H$. This is a cyclic group. Write $e_H$ for the order of $W_H$. Let
$s_H$ be the unique element in $W_H$ whose determinant is
$\exp(\frac{2\pi\sqrt{-1}}{e_H})$.
Let $q$ be a map from $\mathcal{S}$ to $\C^\ast$ that is constant on
the $W$-conjugacy classes. Following \cite[Definition 4.21]{BMR} the
Hecke algebra $\Hecke_q(W,\h)$ attached to $(W,\h)$ with parameter
$q$ is the quotient of the group algebra $\C B(W,\h)$ by the
relations:
\begin{equation}\label{heckerelation}
(T_{s_H}-1)\prod_{t\in W_H\cap\mathcal{S}}(T_{s_H}-q(t))=0,\quad
H\in\mathcal{A}.
\end{equation}
Here $T_{s_H}$ is a generator of the monodromy around $H$ in
$\h_{reg}/W$ such that the lift of $T_{s_H}$ in $\pi_1(W,\h_{reg})$
via the map $\h_{reg}\ra\h_{reg}/W$ is represented by a path from
$x_0$ to $s_H(x_0)$. See \cite[Section 2B]{BMR} for a precise
definition. When the subspace $\h^W$ of fixed points of $W$ in $\h$
is trivial, we abbreviate
$$B_W=B(W,\h), \quad \Hecke_q(W)=\Hecke_q(W,\h).$$

\subsection{Parabolic restriction and induction for Hecke algebras.}\label{ss:resHecke}

In this section we will assume that $\h^W=1$. A parabolic subgroup
$W'$ of $W$ is by definition the stabilizer of a point $b\in\h$. By
a theorem of Steinberg, the group $W'$ is also generated by
pseudo-reflections. Let $q'$ be the restriction of $q$ to
$\mathcal{S'}=W'\cap \mathcal{S}$. There is an explicit inclusion
$\imath_q: \Hecke_{q'}(W')\hookrightarrow \Hecke_q(W)$ given by
\cite[Section 2D]{BMR}. The restriction functor
\begin{equation*}
\Resh:\Hecke_q(W)\modu\ra\Hecke_{q'}(W')\modu
\end{equation*} is the functor
$(\imath_q)_\ast$. The induction functor
$$\Indh=\Hecke_q(W)\otimes_{\Hecke_{q'}(W')}-$$
is left adjoint to $\Resh$. The coinduction functor
$$\coIndh=\Hom_{\Hecke_{q'}(W')}(\Hecke_q(W),-)$$ is right adjoint to
$\Resh$. The three functors above are all exact.

Let us recall the definition of $\imath_q$. It is induced from an
inclusion $\imath: B_{W'}\hookrightarrow B_{W}$, which is in turn
the composition of three morphisms $\ell$, $\kappa$, $\jmath$
defined as follows. First, let $\mathcal{A}'\subset\mathcal{A}$ be
the set of reflecting hyperplanes of $W'$. Write
$$\overline{\h}=\h/\h^{W'},\quad\overline{\mathcal{A}}=\{\overline{H}=H/\h^{W'}:\,H\in
\mathcal{A}'\}, \quad
\overline{\h}_{reg}=\overline{\h}-\bigcup_{\overline{H}\in\overline{\mathcal{A}}}\overline{H},
\quad\h'_{reg}=\h-\bigcup_{H\in\mathcal{A}'}H.$$ The canonical
epimorphism $p: \h\ra\overline{\h}$ induces a trivial
$W'$-equivariant fibration $p: \h'_{reg}\ra \overline{\h}_{reg}$,
which yields an isomorphism
\begin{equation}\label{heckeres1}
\ell: B_{W'}=\pi_1(\overline{\h}_{reg}/{W'},
p(x_0))\overset{\sim}\ra \pi_1(\h'_{reg}/W',x_0).
\end{equation}

Endow $\h$ with a $W$-invariant hermitian scalar product. Let
$||\cdot||$ be the associated norm. Set
\begin{equation}\label{eq:omega}
\Omega=\{x\in\h:\,||x-b||< \varepsilon\},
\end{equation}
where $\varepsilon$ is a positive real number such that the closure
of $\Omega$ does not intersect any hyperplane that is in the
complement of $\mathcal{A}'$ in $\mathcal{A}$. Let $\gamma: [0,1]\ra
\h$ be a path such that $\gamma(0)=x_0$, $\gamma(1)=b$ and
$\gamma(t)\in\h_{reg}$ for $0<t<1$. Let $u\in[0,1[$ such that
$x_1=\gamma(u)$ belongs to $\Omega$, write $\gamma_u$ for the
restriction of $\gamma$ to $[0,u]$. Consider the homomorphism
\begin{equation*}
\sigma: \pi_1(\Omega\cap\h_{reg},x_1)\ra\pi_1(\h_{reg}, x_0),
\quad\lam\mapsto \gamma^{-1}_u\cdot\lam\cdot\gamma_u.
\end{equation*} The canonical
inclusion $\h_{reg}\hookrightarrow\h'_{reg}$ induces a homomorphism
$\pi_1(\h_{reg}, x_0)\ra \pi_1(\h'_{reg}, x_0)$. Composing it with
$\sigma$ gives an invertible homomorphism
$$\pi_1(\Omega\cap\h_{reg},x_1)\ra\pi_1(\h'_{reg}, x_0).$$ Since
$\Omega$ is $W'$-invariant, its inverse gives an isomorphism
\begin{equation}\label{heckeres2}
\kappa:\pi_1(\h'_{reg}/W',
x_0)\overset{\sim}\ra\pi_1((\Omega\cap\h_{reg})/W',x_1).
\end{equation}

Finally, we see from above that $\sigma$ is injective. So it induces
an inclusion
$$\pi_1((\Omega\cap\h_{reg})/W',x_1)\hookrightarrow\pi_1(\h_{reg}/W', x_0).$$
Composing it with the canonical inclusion $\pi_1(\h_{reg}/W',
x_0)\hookrightarrow \pi_1(\h_{reg}/W, x_0)$ gives an injective
homomorphism
\begin{equation}\label{heckeres3}
\jmath:\pi_1((\Omega\cap\h_{reg})/W',x_1)\hookrightarrow
\pi_1(\h_{reg}/W, x_0)=B_W.
\end{equation}
By composing $\ell$, $\kappa$, $\jmath$ we get the inclusion
\begin{equation}\label{heckeres4}
\imath=\jmath\circ\kappa\circ\ell: B_{W'}\hookrightarrow B_W.
\end{equation}
It is proved in \cite[Section 4C]{BMR} that $\imath$ preserves the
relations in (\ref{heckerelation}). So it induces an inclusion of
Hecke algebras which is the desired inclusion
\begin{equation*}
\imath_q: \Hecke_{q'}(W')\hookrightarrow \Hecke_q(W).
\end{equation*}

For $\imath$, $\imath': B_{W'}\hookrightarrow B_W$ two inclusions
defined as above via different choices of the path $\gamma$, there
exists an element $\rho\in P_W=\pi_1(\h_{reg},x_0)$ such that for
any $a\in B_{W'}$ we have $\imath(a)=\rho\imath'(a)\rho^{-1}$. In
particular, the functors $\imath_\ast$ and $(\imath')_\ast$ from
$B_W\modu$ to $B_{W'}\modu$ are isomorphic. Also, we have
$(\imath_q)_\ast\cong(\imath_q')_\ast.$ So there is a unique
restriction functor $\Resh$ up to isomorphisms.

\subsection{Rational DAHA's.}\label{ss:DAHA}

Let $c$ be a map from $\mathcal{S}$ to $\C$ that is constant on the
$W$-conjugacy classes. The rational DAHA attached to $W$ with
parameter $c$ is the quotient $H_c(W,\h)$ of the smash product of
$\C W$ and the tensor algebra of $\h\oplus\h^\ast$ by the relations
\begin{equation*}
[x,x']=0,\quad[y,y']=0,\quad
[y,x]=\pair{x,y}-\sum_{s\in\mathcal{S}}c_s\pair{\al_s,
y}\pair{x,\al_s\spcheck}s,
\end{equation*}
for all $x,x'\in\h^\ast$, $y,y'\in\h$. Here $\pair{\cdot,\cdot}$ is
the canonical pairing between $\h^\ast$ and $\h$, the element
$\al_s$ is a generator of $\mathrm{Im}(s|_{\h^\ast}-1)$ and
$\al_s\spcheck$ is the generator of $\mathrm{Im}(s|_{\h}-1)$ such
that $\pair{\al_s, \al_s\spcheck}=2$.

For $s\in\mathcal{S}$ write $\lam_s$ for the non trivial eigenvalue
of $s$ in $\h^\ast$. Let $\{x_i\}$ be a basis of $\h^\ast$ and let
$\{y_i\}$ be the dual basis. Let
\begin{equation}\label{euler1}
\mathbf{eu}=\sum_{i}x_iy_i+\frac{\dim(\h)}{2}-\sum_{s\in\mathcal{S}}\frac{2c_s}{1-\lam_s}s
\end{equation}
be the Euler element in $H_c(W,\h)$. Its definition is independent
of the choice of the basis $\{x_i\}$. We have
\begin{equation}\label{euler2}
[\mathbf{eu},x_i]=x_i,\quad [\mathbf{eu},y_i]=-y_i,\quad
[\mathbf{eu},s]=0.
\end{equation}

\subsection{}\label{ss:catO}

The category $\calO$ of $H_c(W,\h)$ is the full subcategory
$\calO_c(W,\h)$ of the category of $H_c(W,\h)$-modules consisting of
objects that are finitely generated as $\C[\h]$-modules and
$\h$-locally nilpotent. We recall from \cite[Section 3]{GGOR} the
following properties of $\calO_c(W,\h)$.

The action of the Euler element $\mathbf{eu}$ on a module in
$\calO_c(W,\h)$ is locally finite. The category $\calO_c(W,\h)$ is a
highest weight category. In particular, it is artinian. Write
$\Irr(W)$ for the set of isomorphism classes of irreducible
representations of $W$. The poset of standard modules in
$\calO_c(W,\h)$ is indexed by $\Irr(W)$ with the partial order given
by \cite[Theorem 2.19]{GGOR}. More precisely, for $\xi\in\Irr(W)$,
equip it with a $\C W\ltimes\C[\h^\ast]$-module structure by letting
the elements in $\h\subset\C[\h^\ast]$ act by zero, the standard
module corresponding to $\xi$ is
$$\Delta(\xi)=H_c(W,\h)\otimes_{\C W\ltimes\C[\h^\ast]}\xi.$$
It is an indecomposable module with a simple head $L(\xi)$. The set
of isomorphism classes of simple modules in $\calO_c(W,\h)$ is
$$\{[L(\xi)]:\xi\in\Irr(W)\}.$$
It is a basis of the $\C$-vector space $K(\calO_c(W,\h))$. The set
$\{[\Delta(\xi)]:\xi\in\Irr(W)\}$ gives another basis of
$K(\calO_c(W,\h))$.

We say a module $N$ in $\calO_c(W,\h)$ has a standard filtration if
it admits a filtration
$$0=N_0\subset N_1\subset\ldots\subset N_n=N$$ such that each
quotient $N_i/N_{i-1}$ is isomorphic to a standard module. We denote
by $\calO^\Delta_c(W,\h)$ the full subcategory of $\calO_c(W,\h)$
consisting of such modules.

\begin{lemme}\label{standfilt}
(1)Any projective object in $\calO_c(W,\h)$ has a standard
filtration.

(2)A module in $\calO_c(W,\h)$ has a standard filtration if and only
if it is free as a $\C[\h]$-module.
\end{lemme}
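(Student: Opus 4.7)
Part~(1) is a general consequence of the highest weight category structure: in any highest weight category every projective admits a filtration by standard objects (a classical fact of Cline--Parshall--Scott and Dlab--Ringel). So (1) reduces to the statement recalled from~\cite{GGOR} in \S\ref{ss:catO} that $\calO_c(W,\h)$ is highest weight with standards $\Delta(\xi)$.

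For the ``only if'' direction of~(2), I would use the PBW decomposition $H_c(W,\h)\cong\C[\h]\otimes_\C\C W\otimes_\C\C[\h^\ast]$ to identify
$$\Delta(\xi)\;=\;H_c(W,\h)\otimes_{\C W\ltimes\C[\h^\ast]}\xi\;\cong\;\C[\h]\otimes_\C\xi$$
as $\C[\h]$-modules; in particular every standard module is $\C[\h]$-free. Given a short exact sequence $0\to L\to M\to N\to 0$ in $\calO_c(W,\h)$ with $L$ and $N$ free over $\C[\h]$, the $\C[\h]$-projectivity of $N$ splits the sequence, so $M\cong L\oplus N$ is free; iterating on the length of the standard filtration gives the claim.

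For the ``if'' direction, let $M\in\calO_c(W,\h)$ be $\C[\h]$-free and fix a projective cover $P\twoheadrightarrow M$ in $\calO_c(W,\h)$ with kernel $K$. By~(1) and the forward direction just shown, $P$ is $\C[\h]$-free, and since $M$ is $\C[\h]$-projective the sequence $0\to K\to P\to M\to 0$ splits as $\C[\h]$-modules; hence $K$ is a direct summand of the free module $P$ and is itself $\C[\h]$-free (a finitely generated projective module over the polynomial ring $\C[\h]$ is free, by Quillen--Suslin).

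To finish, I would invoke the standard characterization of $\Delta$-filtered objects in highest weight categories: $M$ has a $\Delta$-filtration iff $\Ext^1_{\calO}(M,\nabla(\tau))=0$ for every $\tau\in\Irr(W)$, where $\nabla(\tau)$ denotes the costandard object. Using the description of $\nabla(\tau)$ from~\cite{GGOR} as a $\C[\h^\ast]$-coinduced module, one relates $\Ext^1_\calO(-,\nabla(\tau))$ to $\mathrm{Tor}_1^{\C[\h]}$-groups that vanish on $M$ by $\C[\h]$-freeness. The main obstacle will be making the $\Ext$/$\mathrm{Tor}$ bridge rigorous. A more hands-on alternative is an induction on the $\C[\h]$-rank of $M$: since $\h$ acts locally nilpotently, $M^\h$ is nonzero and decomposes as a $W$-module $\bigoplus_i\xi_i^{m_i}$; the components sitting in the expected $\eu$-eigenspace $c_{\xi_i}$ yield, via the universal property of $\Delta(\xi_i)$, a map $\bigoplus_i\Delta(\xi_i)^{m_i}\to M$ whose image one verifies to be a submodule with $\C[\h]$-free cokernel of strictly smaller rank, closing the induction.
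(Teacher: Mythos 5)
The paper disposes of this lemma with a citation to \cite[Proposition 2.21]{GGOR}, so the issue is whether your reconstruction is sound rather than whether it matches. Part (1), via the general structure theory of highest weight categories, and the ``only if'' half of (2), via $\Delta(\xi)\cong\C[\h]\otimes\xi$ together with the $\C[\h]$-splitting of each step of a standard filtration, are both correct and complete.

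The ``if'' half is where the content lies, and both of your proposed routes stop short. Option A ($\Ext^1$-vanishing against costandards, translated to $\mathrm{Tor}_1^{\C[\h]}$) is the right framework, but you leave the translation as an acknowledged hole. Option B, the rank induction, has a genuine gap at the decisive step. Your selection rule, ``the components of $M^\h$ sitting in the expected $\eu$-eigenspace $c_{\xi_i}$,'' in fact selects \emph{all} of $M^\h$: since $\sum_i x_iy_i$ annihilates $M^\h$, the element $\eu$ acts on every $W$-isotypic summand $\xi\subset M^\h$ by exactly $c_\xi$, so the criterion is vacuous. But the resulting map $H_c(W,\h)\otimes_{\C W\ltimes\C[\h^\ast]}M^\h\to M$ need not be injective: whenever $\Delta(\xi)$ is not simple, the space $\Delta(\xi)^\h$ strictly contains $\xi$ (it meets the radical), so already for $M=\Delta(\xi)$ the source has larger $\C[\h]$-rank than the target. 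Thus the phrase ``whose image one verifies to be a submodule with $\C[\h]$-free cokernel of strictly smaller rank'' is precisely where the lemma lives, and as stated it is false. The repair is to take $V$ to be the sum of the $\eu$-eigenspaces of $M$ whose eigenvalue $c$ is such that $c-1$ is not an eigenvalue of $M$. Then $V\subset M^\h$, and $V\cap\h^\ast M=0$ by eigenvalue comparison, so $V$ embeds as a $W$-submodule into the fibre $M/\h^\ast M$; choosing a $W$-stable complement, lifting a basis of $M/\h^\ast M$ through $V$ to a $\C[\h]$-basis of $M$ (using that $M$ is $\C[\h]$-free), one sees that the induced map $\Delta(V)\to M$ is a split injection of $\C[\h]\rtimes W$-modules with $\C[\h]$-free cokernel of strictly smaller rank. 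Since $\Delta(V)$ is a direct sum of standard modules, this closes the induction.
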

Both (1) and (2) are given by \cite[Proposition 2.21]{GGOR}.

The category $\calO_c(W,\h)$ has enough projective objects and has
finite homological dimension \cite[Section 4.3.1]{GGOR}. In
particular, any module in $\calO_c(W,\h)$ has a finite projective
resolution. Write $\Proj_c(W,\h)$ for the full subcategory of
projective modules in $\calO_c(W,\h)$. Let
\begin{equation*}
I: \Proj_c(W,\h)\ra \calO_c(W,\h)
\end{equation*}
be the canonical embedding functor. We have the following lemma.

\begin{lemme}\label{projiso}
For any abelian category $\calA$ and any right exact
functors $F_1$, $F_2$ from $\calO_c(W,\h)$ to $\mathcal{A}$, the
homomorphism of vector spaces
\begin{equation*}
r_I:\Hom(F_1, F_2)\ra \Hom(F_1\circ I, F_2\circ I),
\quad\gamma\mapsto \gamma1_I
\end{equation*}
is an isomorphism.
\end{lemme}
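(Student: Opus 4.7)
The plan is to exploit that $\calO_c(W,\h)$ has enough projective objects (so every $M$ admits a projective presentation $P_1 \to P_0 \to M \to 0$) together with the right exactness of $F_1,F_2$, so that morphisms of such functors are determined by, and can be reconstructed from, their restrictions to $\Proj_c(W,\h)$.

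First I would prove injectivity of $r_I$. Suppose $\gamma \in \Hom(F_1,F_2)$ satisfies $\gamma 1_I=0$, i.e.\ $\gamma(P)=0$ for every projective $P$. Given any $M\in\calO_c(W,\h)$, pick a projective presentation $P_1\xrightarrow{d} P_0\xrightarrow{\pi} M\to 0$. Right exactness of $F_i$ yields exact sequences $F_i(P_1)\to F_i(P_0)\xrightarrow{F_i(\pi)} F_i(M)\to 0$, and naturality of $\gamma$ gives the commutative square relating $\gamma(P_0)$ and $\gamma(M)$. Since $F_2(\pi)$ is an epimorphism and $\gamma(M)\circ F_1(\pi)=F_2(\pi)\circ\gamma(P_0)=0$, we conclude $\gamma(M)=0$. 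Hence $\gamma=0$.

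Next I would prove surjectivity. Given $\delta\in\Hom(F_1\circ I, F_2\circ I)$, I define $\gamma(M)$ as follows. Choose a presentation $P_1\xrightarrow{d}P_0\xrightarrow{\pi}M\to 0$. By right exactness we have the diagram
\begin{equation*}
\xymatrix{
F_1(P_1)\ar[r]^{F_1(d)}\ar[d]_{\delta(P_1)} & F_1(P_0)\ar[r]^{F_1(\pi)}\ar[d]_{\delta(P_0)} & F_1(M)\ar[r]\ar@{-->}[d]^{\gamma(M)} & 0\\
F_2(P_1)\ar[r]^{F_2(d)} & F_2(P_0)\ar[r]^{F_2(\pi)} & F_2(M)\ar[r] & 0
}
\end{equation*}
where the two left-hand squares commute by naturality of $\delta$. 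Since $F_2(\pi)\circ\delta(P_0)\circ F_1(d)=F_2(\pi)\circ F_2(d)\circ\delta(P_1)=0$, the map $F_2(\pi)\circ\delta(P_0)$ factors uniquely through $F_1(\pi)$, defining $\gamma(M)$.

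Finally I would verify that $\gamma(M)$ is independent of the chosen presentation and is natural in $M$. For independence, given two presentations one can always compare them via a third one that dominates both (lifting the identity on $M$ to a morphism of presentations, using projectivity of the terms in one); naturality of $\delta$ on the comparison morphism forces the two definitions of $\gamma(M)$ to agree. For naturality, given $f:M\to N$, lift $f$ to a morphism of projective presentations $P_\bullet\to Q_\bullet$ using projectivity; naturality of $\delta$ on the $P_i,Q_i$ plus uniqueness of the factorization yields $\gamma(N)\circ F_1(f)=F_2(f)\circ\gamma(M)$. By construction $\gamma(P)=\delta(P)$ for projective $P$ (using the trivial presentation $0\to P\to P\to 0$), so $r_I(\gamma)=\delta$.

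The main obstacle is purely bookkeeping: checking independence of the presentation and naturality of $\gamma$. Both reductions rest on standard lifting arguments via projectivity, and the uniqueness of the factorization through the cokernel $F_1(\pi)$ makes everything automatic. No use of the finite global dimension of $\calO_c(W,\h)$ is needed; enough projectives plus right exactness of $F_1,F_2$ suffices.
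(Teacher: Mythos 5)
Your proof is correct and follows essentially the same route as the paper: both arguments use that $\calO_c(W,\h)$ has enough projectives to write a presentation $P_1\to P_0\to M\to 0$, apply right exactness of $F_1,F_2$ to obtain exact rows, and define the extension of $\delta$ on $M$ via the unique factorization through the cokernel. The paper compresses injectivity into the uniqueness of this factorization, while you state it separately, but the underlying mechanism is identical.
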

In particular, if the functor $F_1\circ I$ is isomorphic to
$F_2\circ I$, then we have $F_1\cong F_2$.
\begin{proof}
We need to show that for any morphism of functors $\nu: F_1\circ
I\ra F_2\circ I$ there is a unique morphism $\tilde{\nu}: F_1\ra
F_2$ such that $\tilde{\nu}1_{I}=\nu$. Since $\calO_c(W,\h)$ has enough projectives, for any $M\in \calO_c(W,\h)$ there exists $P_0$, $P_1$ in $\Proj_c(W,\h)$ and an exact sequence in $\calO_c(W,\h)$
\begin{equation}\label{eq:projresolution}
P_1\overset{d_1}\lra P_0\overset{d_0}\lra M\lra 0.
\end{equation}
Applying the right exact functors $F_1$, $F_2$ to this sequence we get the two exact sequences in the diagram below. The morphism of functors $\nu:F_1\circ I\ra F_2\circ I$ yields well defined morphisms $\nu(P_1)$, $\nu(P_0)$ such that the square commutes
$$\xymatrix{F_1(P_1)\ar[r]^{F_1(d_1)}\ar[d]^{\nu(P_1)} & F_1(P_0)\ar[r]^{F_1(d_0)}\ar[d]^{\nu(P_0)} &F_1(M)\ar[r] \ar@{}[d] &0\ar@{}[d]\\F_2(P_1)\ar[r]^{F_2(d_1)} & F_2(P_0)\ar[r]^{F_2(d_0)} &F_2(M)\ar[r] &0.}$$
Define $\tilde{\nu}(M)$ to be the unique morphism $F_1(M)\ra F_2(M)$ that makes the diagram commute. Its definition is independent of the choice of $P_0$, $P_1$, and it is independent of the choice of the exact sequence (\ref{eq:projresolution}). The assignment $M\mapsto \tilde{\nu}(M)$ gives a morphism of functor $\tilde{\nu}: F_1\ra F_2$ such that $\tilde{\nu}1_{I}=\nu$. It is unique by the uniqueness of the morphism $\tilde{\nu}(M)$.
\end{proof}

\subsection{KZ functor.}\label{ss:KZ}

The Knizhnik-Zamolodchikov functor is an exact functor from the
category $\calO_c(W,\h)$ to the category $\Hecke_q(W,\h)\modu$,
where $q$ is a certain parameter associated with $c$. Let us recall
its definition from \cite[Section 5.3]{GGOR}.

Let $\mathcal{D}(\h_{reg})$ be the algebra of differential operators
on $\h_{reg}$. Write
$$H_c(W,\h_{reg})=H_c(W,\h)\otimes_{\C[\h]}\C[\h_{reg}].$$ We consider the Dunkl isomorphism, which is an
isomorphism of algebras
\begin{equation*}
H_c(W,\h_{reg})\overset{\sim}\ra \mathcal{D}(\h_{reg})\rtimes\C W
\end{equation*}
given by $x\mapsto x$, $w\mapsto w$ for $x\in\h^\ast$, $w\in W$, and
\begin{equation*}
y\mapsto
\partial_y+\sum_{s\in\mathcal{S}}\frac{2c_s}{1-\lam_s}\frac{\al_s(y)}{\al_s}(s-1),\quad\text{for }y\in\h.
\end{equation*}

For any $M\in \calO_c(W,\h)$, write
$$M_{\h_{reg}}=M\otimes_{\C[\h]}\C[\h_{reg}].$$
It identifies via the Dunkl isomorphism with a
$\mathcal{D}(\h_{reg})\rtimes W$-module which is finitely generated
over $\C[\h_{reg}]$. Hence $M_{\h_{reg}}$ is a $W$-equivariant
vector bundle on $\h_{reg}$ with an integrable connection $\nabla$
given by $\nabla_y(m)=\partial_ym$ for $m\in M$, $y\in\h$. It is
proved in \cite[Proposition 5.7]{GGOR} that the connection $\nabla$
has regular singularities. Now, regard $\h_{reg}$ as a complex
manifold endowed with the transcendental topology. Denote by
$\mathcal{O}^{an}_{\h_{reg}}$ the sheaf of holomorphic functions on
$\h_{reg}$. For any free $\C[\h_{reg}]$-module $N$ of finite rank,
we consider
$$N^{an}=N\otimes_{\C[\h_{reg}]}\mathcal{O}^{an}_{\h_{reg}}.$$ It is an
analytic locally free sheaf on $\h_{reg}$. For $\nabla$ an
integrable connection on $N$, the sheaf of holomorphic horizontal
sections
\begin{equation*}
N^{\nabla}=\{n\in N^{an}:\,\nabla_y(n)=0\text{ for all }y\in\h\}
\end{equation*}
is a $W$-equivariant local system on $\h_{reg}$. Hence it identifies
with a local system on $\h_{reg}/W$. So it yields a finite
dimensional representation of $\C B(W,\h)$. For $M\in \calO_c(W,\h)$
it is proved in \cite[Theorem 5.13]{GGOR} that the action of $\C
B(W,\h)$ on $(M_{\h_{reg}})^{\nabla}$ factors through the Hecke
algebra $\Hecke_q(W,\h)$. The formula for the parameter $q$ is given
in \cite[Section 5.2]{GGOR}.

The Knizhnik-Zamolodchikov functor is the functor
$$\KZ(W,\h): \calO_c(W,\h)\ra\Hecke_q(W,\h)\modu,\quad M\mapsto
(M_{\h_{reg}})^{\nabla}.$$ By definition it is exact. Let us recall
some of its properties following \cite{GGOR}. Assume in the rest of
this subsection that \emph{the algebras $\Hecke_q(W,\h)$ and $\C W$
have the same dimension over $\C$}. We abbreviate $\KZ=\KZ(W,\h)$. The functor $\KZ$ is represented by a projective object $P_{\KZ}$ in $\calO_c(W,\h)$. More precisely, there is an algebra homomorphism
\begin{equation*}
  \rho:\Hecke_q(W,\h)\ra\End_{\calO_c(W,\h)}(P_{\KZ})^{\op}
\end{equation*}
such that $\KZ$ is isomorphic to the functor $\Hom_{\calO_c(W,\h)}(P_{\KZ},-)$. By \cite[Theorem 5.15]{GGOR} the homomorphism $\rho$ is an isomorphism. In particular $\KZ(P_{\KZ})$ is isomorphic to $\Hecke_q(W,\h)$ as $\Hecke_q(W,\h)$-modules.

Now, recall that the center of a category $\calC$ is the
algebra $Z(\calC)$ of endomorphisms of the identity functor
$Id_{\calC}$. So there is a canonical map
$$Z(\calO_c(W,\h))\ra\End_{\calO_c(W,\h)}(P_{\KZ}).$$
The composition of this map with $\rho^{-1}$ yields an algebra
homomorphism
\begin{equation*}
\gamma: Z(\calO_c(W,\h))\ra Z(\Hecke_q(W,\h)),
\end{equation*}
where $Z(\Hecke_q(W,\h))$ denotes the center of $\Hecke_q(W,\h)$.

\begin{lemme}\label{lem:center}
(1) The homomorphism $\gamma$ is an isomorphism.

(2) For a module $M$ in $\calO_c(W,\h)$ and an element $f$ in
$Z(\calO_c(W,\h))$ the morphism
$$\KZ(f(M)): \KZ(M)\ra\KZ(M)$$ is the multiplication by $\gamma(f)$.
\end{lemme}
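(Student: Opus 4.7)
The plan is to settle (2) first by a direct computation, then use (2) together with the fully faithfulness of $\KZ$ on projective objects of $\calO_c(W,\h)$ to obtain (1).

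For (2), since $\KZ\cong \Hom_{\calO_c(W,\h)}(P_{\KZ},-)$ and since $f\in Z(\calO_c(W,\h))$ is, by definition, a natural transformation of $\Id_{\calO_c(W,\h)}$, naturality applied to any morphism $\phi\in\Hom_{\calO_c(W,\h)}(P_{\KZ},M)=\KZ(M)$ yields
\[\KZ(f(M))(\phi)=f(M)\circ\phi=\phi\circ f(P_{\KZ}).\]
By the definition of $\gamma$, $\rho(\gamma(f))=f(P_{\KZ})$, and since $\rho$ takes values in $\End(P_{\KZ})^{\op}$ the formula $h\cdot\phi=\phi\circ\rho(h)$ is precisely the left $\Hecke_q(W,\h)$-action on $\KZ(M)$. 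Thus $\KZ(f(M))$ equals multiplication by $\gamma(f)$; in particular, $f(P_{\KZ})$ commutes with every element of $\End(P_{\KZ})$, so $\gamma(f)$ is indeed central in $\Hecke_q(W,\h)$ and $\gamma$ is well defined.

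For (1), injectivity goes as follows. If $\gamma(f)=0$ then $f(P_{\KZ})=0$, and hence by (2) $\KZ(f(P))=0$ for every projective $P$. Here I would invoke the result of \cite{GGOR} that $\KZ$ restricted to $\Proj_c(W,\h)$ is fully faithful, so that $\Hom_{\calO_c(W,\h)}(P,Q)\simra\Hom_{\Hecke_q(W,\h)}(\KZ(P),\KZ(Q))$ for projectives $P,Q$. This forces $f(P)=0$ for every projective $P$, and Lemma \ref{projiso} applied with $F_1=F_2=\Id_{\calO_c(W,\h)}$ then gives $f=0$. For surjectivity, given $z\in Z(\Hecke_q(W,\h))$, multiplication by $z$ is a natural endomorphism of $\KZ$. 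Using fully faithfulness again, for each projective $P$ this produces a unique $\eta_z(P)\in\End_{\calO_c(W,\h)}(P)$ with $\KZ(\eta_z(P))=z\cdot 1_{\KZ(P)}$, and these assemble into a natural endomorphism of the inclusion $I:\Proj_c(W,\h)\hookrightarrow\calO_c(W,\h)$. A second application of Lemma \ref{projiso} (to $F_1=F_2=\Id$) extends $\eta_z$ to a unique $\tilde\eta_z\in \End(\Id)=Z(\calO_c(W,\h))$, and by construction $\gamma(\tilde\eta_z)=z$.

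The main obstacle is the fully faithfulness of $\KZ$ on $\Proj_c(W,\h)$, which is a delicate consequence of the double-centralizer statement $\End_{\calO_c(W,\h)}(P_{\KZ})^{\op}\cong\Hecke_q(W,\h)$ combined with the highest weight structure on $\calO_c(W,\h)$; this is exactly what allows natural transformations between the $\KZ$-images of projectives to be lifted uniquely to natural transformations of the projectives themselves, and hence what produces the inverse of $\gamma$.
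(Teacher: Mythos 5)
Your proof is correct. The paper establishes part (1) by citing \cite[Corollary 5.18]{GGOR} and remarks that (2) follows from the construction of $\gamma$; your argument simply unwinds that citation using precisely the ingredients the paper already relies on — full faithfulness of $\KZ$ on $\Proj_c(W,\h)$ from Proposition \ref{KZ}(1) and the passage between $\End(\Id_{\calO_c(W,\h)})$ and $\End(I)$ via Lemma \ref{projiso} — so the approach is essentially the same as the one underlying the cited result.
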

See \cite[Corollary 5.18]{GGOR} for (1). Part (2) follows from the
construction of $\gamma$.

The functor $\KZ$ is a quotient functor, see \cite[Theorem 5.14]{GGOR}. Therefore it has a right adjoint $S:\Hecke_q(W,\h)\ra\calO_c(W,\h)$ such that the canonical adjunction map $\KZ\circ S\ra\Id_{\Hecke_q(W,\h)}$ is an isomorphism of functors. We have the following proposition.

\begin{prop}\label{KZ}
Let $Q$ be a projective object in $\calO_c(W,\h)$.

(1) For any object $M\in\calO_c(W,\h)$, the following morphism of $\C$-vector spaces is an isomorphism
$$\Hom_{\calO_c(W,\h)}(M,Q)\simlra \Hom_{\Hecke_q(W)}(\KZ(M),\KZ(Q)),\quad f\mapsto\KZ(f).$$
In particular, the functor $\KZ$ is fully faithful over $\Proj_c(W,\h)$.

(2)The canonical adjunction map gives an isomorphism $Q\simra S\circ \KZ (Q)$.
\end{prop}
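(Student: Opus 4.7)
The plan is to first observe that parts (1) and (2) are equivalent, using the $(\KZ, S)$-adjunction, and then prove (2) by first handling $Q = P_{\KZ}$ and extending by a direct summand argument. Precisely, the map $f \mapsto \KZ(f)$ of (1) factors as
$$
\Hom_{\calO_c(W,\h)}(M, Q) \xrightarrow{(\eta_Q)_\ast} \Hom_{\calO_c(W,\h)}(M, S\KZ(Q)) \overset{\sim}\lra \Hom_{\Hecke_q(W,\h)}(\KZ(M), \KZ(Q)),
$$
where $\eta: \Id \to S\KZ$ is the adjunction unit and the second arrow is the adjunction isomorphism. By Yoneda's lemma, the composition is an iso for all $M$ if and only if $\eta_Q$ is an iso, i.e.\ (2) holds. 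So it suffices to prove (2).

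The unit $\eta_Q$ satisfies $\KZ(\eta_Q) = \id_{\KZ(Q)}$ by the triangle identity, combined with the hypothesis that the counit $\epsilon: \KZ\circ S \simra \Id$ is an iso. Hence $\ker \eta_Q$ and $\Coker \eta_Q$ both lie in $\ker \KZ$. For the key case $Q = P_{\KZ}$, we exploit the identification $\KZ \cong \Hom_{\calO_c(W,\h)}(P_{\KZ}, -)$ and the isomorphism $\rho: \Hecke_q(W,\h) \simra \End_{\calO_c(W,\h)}(P_{\KZ})^{\op}$. Applying the exact functor $\KZ$ to
$$
0 \to \ker \eta_{P_{\KZ}} \to P_{\KZ} \xrightarrow{\eta_{P_{\KZ}}} S\KZ(P_{\KZ}) \to \Coker \eta_{P_{\KZ}} \to 0
$$
kills the outer terms and, via $\rho$, realizes the middle map as the identity on $\Hecke_q(W,\h)$; combining this with the projectivity of $P_{\KZ}$ and the iso $\rho$ should force $\ker \eta_{P_{\KZ}} = \Coker \eta_{P_{\KZ}} = 0$.

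For general projective $Q$, decompose $Q$ into indecomposable projective summands $P(\xi)$. Since both the identity and $S\KZ$ preserve finite direct sums, (2) reduces to each $P(\xi)$. The summands $P(\xi)$ with $\KZ(L(\xi)) \neq 0$ appear as direct summands of $P_{\KZ}$, hence inherit (2) from the previous step. For summands $P(\xi)$ with $\KZ(L(\xi)) = 0$ (if any), a separate argument using the highest-weight structure of $\calO_c(W,\h)$ and the behavior of $\KZ$ on the corresponding block is needed.

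The main obstacle will be the final step of the $Q = P_{\KZ}$ argument: extracting the vanishing of $\ker \eta_{P_{\KZ}}$ and $\Coker \eta_{P_{\KZ}}$ from the weaker fact that they lie in $\ker \KZ$. Since $\Hom_{\calO_c(W,\h)}(P_{\KZ}, -) = \KZ$ cannot directly detect these objects, one must invoke either the standard filtration of $P_{\KZ}$ (Lemma \ref{standfilt}) together with the explicit action of $\rho$, or a more structural property of the localization $\KZ$ identifying the essential image of $S$ with the full subcategory of projectives.
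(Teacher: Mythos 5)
Your reduction of (1) to (2) via Yoneda and the $(\KZ,S)$-adjunction is correct and reflects the actual logical structure: (1) and (2) are equivalent. The observation that $\ker\eta_Q$ and $\Coker\eta_Q$ lie in $\ker\KZ$ (since $\KZ(\eta_Q)$ is inverse to the counit iso) is also correct. But, as you yourself flag at the end, this is where the argument stalls, and the mechanisms you sketch do not close it.

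The issue is that $\ker\KZ$ is far from trivial: it consists of all modules supported away from $\h_{reg}$, and contains nonzero objects (e.g.\ every $L(\xi)$ with proper support). So lying in $\ker\KZ$ does not force vanishing. Your proposed fix for $Q=P_{\KZ}$ does not work: projectivity of $P_{\KZ}$ is irrelevant because $P_{\KZ}$ sits as a \emph{sub}object, not a quotient, in the four-term sequence, and the isomorphism $\rho$ only identifies $\End(P_{\KZ})$ with $\Hecke_q(W,\h)^{\op}$, i.e.\ it controls $\Hom(P_{\KZ},P_{\KZ})$ but says nothing about $\Hom(M,P_{\KZ})$ for other test objects $M$, which is what Yoneda requires. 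Moreover the summand reduction cannot be completed: the indecomposable projectives $P(\xi)$ with $\KZ(L(\xi))=0$ genuinely exist in general, are not direct summands of $P_{\KZ}$, and you leave them entirely open.

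The missing inputs are structural, and they make the summand bookkeeping unnecessary. For $\ker\eta_Q=0$: by Lemma \ref{standfilt}(1) any projective $Q$ has a standard filtration, hence is free (in particular torsion-free) over $\C[\h]$; every object of $\ker\KZ$ is $\C[\h]$-torsion, so a torsion submodule of $Q$ is zero. This works uniformly for all projective $Q$. For $\Coker\eta_Q=0$, which is the real content, one must show $Q$ is ``$\KZ$-closed'' in the sense that $\Ext^1_{\calO_c(W,\h)}(T,Q)=0$ for all $T\in\ker\KZ$; mere torsion-ness of the cokernel is not enough. This is where \cite[Theorems 5.3, 5.16]{GGOR} (the source the paper cites) invoke the highest weight structure and the contravariant duality $(-)\spcheck$ taking projectives to projectives. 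Without an argument of this type your proof cannot be completed, and indeed you have not supplied one.
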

See \cite[Theorems 5.3, 5.16]{GGOR}.

\subsection{Parabolic restriction and induction for rational
DAHA's.}\label{ss:resDAHA}

From now on we will always assume that $\h^W=1$. Recall from Section
\ref{ss:resHecke} that $W'\subset W$ is the stabilizer of a point
$b\in\h$ and that $\overline{\h}=\h/\h^{W'}$. Let us recall from
\cite{BE} the definition of the parabolic restriction and induction
functors
$$\Res_b:\calO_c(W,\h)\ra\calO_{c'}(W',\overline{\h})\,,\quad
\Ind_b:\calO_{c'}(W',\overline{\h})\ra\calO_c(W,\h).$$ First we need some notation. For any point $p\in\h$ we write
$\C[[\h]]_p$ for the completion of $\C[\h]$ at $p$, and we write $\widehat{\C[\h]}_p$ for the completion of $\C[\h]$ at the $W$-orbit of $p$ in $\h$. Note that we have $\C[[\h]]_0=\widehat{\C[\h]}_0$. For any
$\C[\h]$-module $M$ let $$\widehat{M}_p=\widehat{\C[\h]}_p\otimes_{\C[\h]}M.$$ The completions $\widehat{H}_{c}(W,\h)_b$, $\widehat{H}_{c'}(W',\h)_0$ are well defined algebras.
We denote by $\widehat{\calO}_c(W,\h)_b$ the
category of $\widehat{H}_{c}(W,\h)_b$-modules that are finitely
generated over $\widehat{\C[\h]}_b$, and we denote by $\widehat{\calO}_{c'}(W',\h)_0$ the
category of $\widehat{H}_{c'}(W',\h)_0$-modules that are finitely
generated over $\widehat{\C[\h]}_0$. Let
$P=\mathrm{Fun}_{W'}(W,\widehat{H}_{c}(W',\h)_0)$ be the set of
$W'$-invariant maps from $W$ to $\widehat{H}_{c}(W',\h)_0$. Let
$Z(W,W',\widehat{H}_{c}(W',\h)_0)$ be the ring of endomorphisms of the
right $\widehat{H}_{c}(W',\h)_0$-module $P$.
We have the following proposition given by \cite[Theorem 3.2]{BE}.
\begin{prop}\label{BEiso}
There is an isomorphism of algebras $$\Theta:
\widehat{H}_{c}(W,\h)_b\lra Z(W,W', \widehat{H}_{c'}(W',\h)_0)$$
defined as follows: for $f\in P$, $\al\in\h^\ast$, $a\in\h$, $u\in
W$,
\begin{eqnarray*}
(\Theta(u)f)(w)&=&f(w u),\\
(\Theta(x_{\al})f)(w)&=&(x^{(b)}_{w\al}+\al(w^{-1}b))f(w),\\
(\Theta(y_a)f)(w)&=&y^{(b)}_{wa}f(w)+\sum_{s\in\mathcal{S}, s\notin
W'}\frac{2c_s}{1-\lam_s}\frac{\al_s(wa)}{x^{(b)}_{\al_s}+\al_s
(b)}(f(sw)-f(w)),
\end{eqnarray*}
where $x_\al\in\h^\ast\subset H_{c}(W,\h)$,
$x^{(b)}_{\al}\in\h^\ast\subset H_{c'}(W',\h)$, $y_a\in\h\subset
H_{c}(W,\h)$, $y_a^{(b)}\in\h\subset H_{c'}(W',\h)$.
\end{prop}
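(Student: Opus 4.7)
The plan is to proceed in three steps: (i) verify that the given formulas define an algebra homomorphism from $H_c(W,\h)$ into $Z(W,W',\widehat{H}_{c'}(W',\h)_0)$; (ii) show this homomorphism extends continuously to the completion $\widehat{H}_c(W,\h)_b$; and (iii) prove that the extended map is a bijection.

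For step (i), I would check the defining relations of $H_c(W,\h)$: the group relations of $\C W$; the commutations $[x,x']=0$ and $[y,y']=0$; the $W$-equivariance $uxu^{-1}=u(x)$ and $uyu^{-1}=u(y)$; and the Dunkl-type relation for $[y,x]$. The group relations follow from $(\Theta(u)\Theta(v)f)(w)=f(wuv)$, the $W$-equivariance relations by a direct unwinding of $(wu)^{-1}b=u^{-1}w^{-1}b$, and $[x,x']=0$ is immediate since the $x^{(b)}_\beta$ mutually commute. The commutation $[y,y']=0$ and the Dunkl relation for $[y_a,x_\al]$ are the substantive computations. For the latter, after splitting $\mathcal{S}=(W'\cap\mathcal{S})\sqcup(\mathcal{S}\setminus W')$, the contribution of reflections $s\in W'$ is produced by the Dunkl relation already satisfied by $y^{(b)}_{wa}$ and $x^{(b)}_{w\al}$ inside $\widehat{H}_{c'}(W',\h)_0$ (one uses the identity $\al_s(wa)=\al_s(a)$ for $s\in W'$ after replacing $f(sw)=f(w)$ by $W'$-invariance of $f$); the contribution from $s\notin W'$ is produced by commuting the correction term $\tfrac{2c_s}{1-\lam_s}\tfrac{\al_s(wa)}{x^{(b)}_{\al_s}+\al_s(b)}(f(sw)-f(w))$ of $\Theta(y_a)$ against $\Theta(x_\al)$, and tracking how $\Theta(x_\al)$ acts on $f(sw)$ versus $f(w)$.

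For step (ii), the fractions $(x^{(b)}_{\al_s}+\al_s(b))^{-1}$ for $s\notin W'$ make sense in $\widehat{H}_{c'}(W',\h)_0$ precisely because $\al_s(b)\neq 0$, and can be expanded as geometric series in the powers of $x^{(b)}_{\al_s}$; this is the only place where a completion is required. On the $H_c(W,\h)$-side, the decomposition
\begin{equation*}
\widehat{\C[\h]}_b\;\simlra\;\bigoplus_{w\in W/W'}\C[[\h]]_{wb}
\end{equation*}
coming from the Chinese remainder theorem identifies $\widehat{\C[\h]}_b$ with the $\widehat{\C[\h]}_0$-valued component of $P$ after translating each $wb$ to $0$, so $\Theta$ extends to a map defined on $\widehat{H}_c(W,\h)_b$ by continuity. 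For step (iii), I would compare associated gradeds with respect to the PBW filtration by degree in $\h$: on the source the associated graded is $\widehat{\C[\h]}_b\otimes \C W\otimes S(\h^*)$; on the target, using that $P$ is free of rank $|W/W'|$ over $\widehat{H}_{c'}(W',\h)_0$, one gets a matrix algebra whose associated graded is $\bigoplus_{w\in W/W'}\C[[\h]]_0\otimes\C W\otimes S(\h^*)$, and $\gr\Theta$ is visibly an isomorphism. Hence $\Theta$ itself is an isomorphism.

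The main obstacle is the bookkeeping in step (i) for the Dunkl commutation. The contributions from reflections in $W'$ and outside $W'$ must be disentangled carefully: one has to track how the shift $\al(w^{-1}b)$ in $\Theta(x_\al)$ interacts with the argument $wa$ of the correction term in $\Theta(y_a)$, use the expansion of $(x^{(b)}_{\al_s}+\al_s(b))^{-1}$ modulo higher-order terms, and exploit $W'$-invariance of $f\in P$ to collapse the sum over $s\in W'$ into the intrinsic Dunkl relation of $H_{c'}(W',\h)$. Once these cancellations go through, the remaining steps are essentially formal.
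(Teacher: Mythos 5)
The paper does not prove this proposition: it is quoted directly from Bezrukavnikov--Etingof \cite[Theorem 3.2]{BE} and cited as such, so there is no internal argument to compare against. Your three-step outline (check the relations on generators, pass to the completion, prove bijectivity) is the natural shape of a proof, and steps (ii)--(iii) are essentially right in outline. But the sketch of step (i), which you correctly identify as the crux, contains errors that would derail the computation.

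The condition defining $P=\mathrm{Fun}_{W'}(W,\widehat{H}_{c'}(W',\h)_0)$ is $W'$-\emph{equivariance}, $f(sw)=s\,f(w)$ for $s\in W'$ with $s$ acting through the $\widehat{H}_{c'}(W',\h)_0$-module structure; it is not $f(sw)=f(w)$. This is not cosmetic: in $[\Theta(y_a),\Theta(x_\al)]$ the intrinsic Dunkl relation for $y^{(b)}_{wa}$ produces reflections $s'\in W'\cap\mathcal{S}$ acting on $f(w)$, and it is precisely the equivariance $s'f(w)=f(s'w)=f\bigl(w\cdot(w^{-1}s'w)\bigr)$ that converts these into terms $(\Theta(t)f)(w)$ with $t=w^{-1}s'w$; if $f(sw)=f(w)$ these terms would degenerate and the relation could not close. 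Similarly, the identity $\al_s(wa)=\al_s(a)$ you invoke for $s\in W'$ is false; what is true, after the change of variable $s'=wsw^{-1}$, is $\al_{s'}(wa)=\al_s(a)$ together with $\langle w\al,\al_{s'}\spcheck\rangle=\langle\al,\al_s\spcheck\rangle$ and $c_{s'}=c_s$. Consequently the partition of $\mathcal{S}$ that actually organizes the verification at a fixed $w$ is $\{t:wtw^{-1}\in W'\}\sqcup\{t:wtw^{-1}\notin W'\}$, which is $w$-dependent, and it is related to the $w$-independent split $s\in W'$ versus $s\notin W'$ appearing in the formula only via the reparametrization $f(sw)=(\Theta(w^{-1}sw)f)(w)$. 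This is exactly the bookkeeping you flag as the main obstacle, and the sketch as written does not navigate it. A small further point on step (iii): with the filtration by $y$-degree, $\gr$ of the source is $\widehat{\C[\h]}_b\otimes\C W\otimes S(\h)$ rather than $\widehat{\C[\h]}_b\otimes\C W\otimes S(\h^*)$ (which double-counts the $x$-variables); the decomposition $\widehat{\C[\h]}_b\rtimes W\cong\mathrm{Mat}_r(\C[[\h]]_0\rtimes W')$ with $r=|W/W'|$ is what makes the comparison of associated gradeds with the target coherent.
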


Using $\Theta$ we will identify $\widehat{H}_{c}(W,\h)_b$-modules with $Z(W,W', \widehat{H}_{c'}(W',\h)_0)$-modules. So the module $P=\mathrm{Fun}_{W'}(W,\widehat{H}_{c}(W',\h)_0)$ becomes an
$(\widehat{H}_{c}(W,\h)_b,\widehat{H}_{c'}(W',\h)_0)$-bimodule. Hence
for any $N\in \widehat{\calO}_{c'}(W',\h)_0$ the module
$P\otimes_{\widehat{H}_{c'}(W',\h)_0}N$ lives in
$\widehat{\calO}_c(W,\h)_b$. It is naturally identified with
$\mathrm{Fun}_{W'}(W,N)$, the set of $W'$-invariant maps from $W$ to
$N$. For any $\C[\h^\ast]$-module $M$ write $E(M)\subset M$ for the
locally nilpotent part of $M$ under the action of $\h$.

The ingredients for defining the functors $\Res_b$ and $\Ind_b$
consist of:
\begin{itemize}
\item the adjoint pair of functors $(\widehat{\quad}_b, E^b)$ with
$$\widehat{\quad}_b:\calO_{c}(W,\h)\ra\widehat{\calO}_c(W,\h)_b,\quad
M\mapsto\widehat{M}_b,$$
$$E^b: \widehat{\calO}_c(W,\h)_b\ra \calO_{c}(W,\h),\quad N\ra
E(N),$$
\item the Morita equivalence
$$J:\widehat{\calO}_{c'}(W',\h)_0\ra\widehat{\calO}_c(W,\h)_b,\quad
N\mapsto \mathrm{Fun}_{W'}(W,N),$$ and its quasi-inverse
$R$ given in Section \ref{ss:BEiso} below,
\item the equivalence of categories
$$E: \widehat{\calO}_{c'}(W',\h)_0\ra\calO_{c'}(W',\h), \quad M\mapsto E(M)$$
and its quasi-inverse given by $N\mapsto\widehat{N}_0$,
\item the equivalence of categories
\begin{equation}\label{zeta}
\zeta: \calO_{c'}(W',\h)\ra \calO_{c'}(W',\overline{\h}),\quad
M\mapsto\{v\in M:\,yv=0,\,\text{ for all }y\in \h^{W'}\}
\end{equation}
and its quasi-inverse $\zeta^{-1}$ given in Section
\ref{rmq:resDAHA} below.
\end{itemize}
For $M\in\calO_c(W,\h)$ and $N\in\calO_{c'}(W',\overline{\h})$ the
functors $\Res_b$ and $\Ind_b$ are defined by
\begin{eqnarray}
\Res_b(M)=\zeta\circ E\circ R(\widehat{M}_b),\label{Resb}\\
\Ind_b(N)=E^b\circ J(\widehat{\zeta^{-1}(N)}_0).\nonumber
\end{eqnarray}
We refer to \cite[Section 2,3]{BE} for details.

\subsection{The idempotent $x_{\pr}$ and the functor $R$.}\label{ss:BEiso}

We give some details on the isomorphism $\Theta$ for a future use. Fix elements
$1=u_1, u_2,\ldots, u_r$ in $W$ such that $W=\bigsqcup_{i=1}^r
W'u_i$. Let $\mathrm{Mat}_r(\widehat{H}_{c'}(W',\h)_0)$ be the
algebra of $r\times r$ matrices with coefficients in
$\widehat{H}_{c'}(W',\h)_0$. We have an algebra isomorphism
\begin{eqnarray}
\Phi:Z(W,W', \widehat{H}_{c'}(W',\h)_0)&\ra&
\mathrm{Mat}_r(\widehat{H}_{c'}(W',\h)_0),\label{Phi}\\
A&\mapsto& (\Phi(A)_{ij})_{1\leqs i,j\leqs r}\nonumber
\end{eqnarray}
such that
\begin{equation*}
(Af)(u_i)=\sum_{j=1}^r\Phi(A)_{ij}f(u_j), \quad \text{ for all }f\in
P, \,1\leqs i\leqs r.\end{equation*}
Denote by $E_{ij}$, $1\leqs i,j\leqs r$, the elementary matrix in $\mathrm{Mat}_r(\widehat{H}_{c'}(W',\h)_0)$ with coefficient $1$ in the position $(i,j)$ and zero elsewhere. Note that the algebra isomorphism
$$\Phi\circ\Theta: \widehat{H}_{c}(W,\h)_b\simlra \mathrm{Mat}_r(\widehat{H}_{c'}(W',\h)_0)$$
restricts to an isomorphism of subalgebras
\begin{equation}\label{eq:phithetax}
 \widehat{\C[\h]}_b\cong\bigoplus_{i=1}^r \C[[\h]]_0E_{ii}.
\end{equation}
Indeed, there is an unique isomorphism of algebras
\begin{equation}\label{eq:varpi}
\varpi:\widehat{\C[\h]}_b\cong\bigoplus_{i=1}^r\C[[\h]]_{u_i^{-1}b}.
\end{equation}
extending the algebra homomorphism
$$\C[\h]\ra\bigoplus_{i=1}^r\C[\h],\quad x\mapsto (x,x,\ldots, x),\quad \forall\ x\in\h^\ast.$$
For each $i$ consider the isomorphism of algebras
$$\phi_i: \C[[\h]]_{u_i^{-1}b}\ra\C[[\h]]_0,\quad x\mapsto u_ix+x(u_i^{-1}b),\quad\forall\ x\in\h^\ast.$$
The isomorphism (\ref{eq:phithetax}) is exactly the composition of $\varpi$ with the direct sum $\oplus_{i=1}^r\phi_i.$ Here $E_{ii}$ is the image of the idempotent in $\widehat{\C[\h]}_b$ corresponding to the component $\C[[\h]]_{u_i^{-1}b}$. We will denote by $x_{\pr}$ the idempotent in $\widehat{\C[\h]}_b$ corresponding to $\C[[\h]]_b$, i.e., $\Phi\circ\Theta(x_{\pr})=E_{11}$. Then the following functor
$$R:\widehat{\calO}_c(W,\h)_b\ra \widehat{\calO}_{c'}(W',\h)_0,\quad M\mapsto x_{\pr}M$$
is a quasi-inverse of $J$. Here, the action of $\widehat{H}_{c'}(W',\h)_0$ on $R(M)=x_{\pr}M$ is given by the following formulas deduced from Proposition \ref{BEiso}. For any $\al\in\h^\ast$, $w\in W'$,
$a\in\h^\ast$, $m\in M$ we have
\begin{eqnarray}
 x_\al^{(b)}x_{\pr}(m)&=&x_{\pr}((x_{\al}-\al(b))m), \label{xform}\\
wx_{\pr}(m)&=&x_{\pr}(wm), \label{wform}\\
y_a^{(b)}x_{\pr}(m)&=&x_{\pr}((y_a+\sum_{s\in\mathcal{S},\,s\notin
W'}\frac{2c_s}{1-\lam_s}\frac{\al_s(a)}{x_{\al_s}})m). \label{yform}
\end{eqnarray}
In particular, we have
\begin{equation}\label{eq:R(M)}
  R(M)=\phi_1^\ast(x_{\pr}(M))
\end{equation}
as $\C[[\h]]_0\rtimes W'$-modules. Finally, note that the following equality holds in $\widehat{H}_c(W,\h)_b$
\begin{equation}\label{killwform}
x_{\pr}ux_{\pr}=0, \quad \forall\ u\in W-W'.
\end{equation}

\subsection{A quasi-inverse of $\zeta$.}\label{rmq:resDAHA}

Let us recall from \cite[Section 2.3]{BE} the following facts. Let
$\h^{\ast W'}$ be the subspace of $\h^\ast$ consisting of fixed
points of $W'$. Set $$(\h^{\ast W'})^\bot=\{v\in\h: f(v)=0\text{ for
all } f\in\h^{\ast W'}\}.$$ We have a $W'$-invariant decomposition
$$\h=(\h^{\ast W'})^\bot\oplus\h^{W'}.$$ The $W'$-space $(\h^{\ast
W'})^\bot$ is canonically identified with $\overline{\h}$. Since the
action of $W'$ on $\h^{W'}$ is trivial, we have an obvious algebra
isomorphism
\begin{equation}\label{isobete}
H_{c'}(W',\h)\cong H_{c'}(W',\overline{\h})\otimes
\mathcal{D}(\h^{W'}).
\end{equation}
It maps an element $y$ in the subset $\h^{W'}$ of $H_{c'}(W',\h)$ to
the operator $\partial_y$ in $\calD(\h^{W'})$. Write
$\calO(1,\h^{W'})$ for the category of finitely generated
$\calD(\h^{W'})$-modules that are $\partial_y$-locally nilpotent for
all $y\in\h^{W'}$. The algebra isomorphism above yields an
equivalence of categories
\begin{equation*}
\calO_{c'}(W',\h)\cong\calO_{c'}(W',\overline{\h})\otimes\calO(1,\h^{W'}).
\end{equation*}
The functor $\zeta$ in (\ref{zeta}) is an equivalence, because it is
induced by the functor
$$\calO(1,\h^{W'})\simra\C\modu,\quad M\ra\{m\in M, \partial_y(m)=0\text{ for all }y\in\h^{W'}\},$$
which is an equivalence by Kashiwara's lemma upon taking Fourier
transforms. In particular, a quasi-inverse of $\zeta$ is given by
\begin{equation}\label{eq:zetainverse}
\zeta^{-1}: \calO_{c'}(W',\overline{\h})\ra \calO_{c'}(W',\h),\quad N\mapsto
N\otimes\C[\h^{W'}],\end{equation}
where $\C[\h^{W'}]\in \calO(1,\h^{W'})$ is
the polynomial representation of $\mathcal{D}(\h^{W'})$.

Moreover, the functor $\zeta$ maps a standard module in
$\calO_{c'}(W',\h)$ to a standard module in
$\calO_{c'}(W',\overline{\h})$. Indeed, for any $\xi\in\Irr(W')$, we
have an isomorphism of $H_{c'}(W',\h)$-modules
\begin{equation*}
H_{c'}(W',\h)\otimes_{\C[\h^\ast]\rtimes
W'}\xi=(H_{c'}(W',\overline{\h})\otimes_{\C[(\overline{\h})^\ast]\rtimes
W'}\xi)\otimes(\calD(\h^{W'})\otimes_{\C[(\h^{W'})^\ast]}\C).
\end{equation*}
On the right hand side $\C$ denotes the trivial module of
$\C[(\h^{W'})^\ast]$, and the latter is identified with the
subalgebra of $\calD(\h^{W'})$ generated by $\partial_y$ for all
$y\in\h^{W'}$. We have
$$\calD(\h^{W'})\otimes_{\C[(\h^{W'})^\ast]}\C\cong\C[\h^{W'}]$$ as
$\calD(\h^{W'})$-modules. So $\zeta$ maps the standard module
$\Delta(\xi)$ for $H_{c'}(W',\h)$ to the standard module
$\Delta(\xi)$ for $H_{c'}(W',\overline{\h})$.

\subsection{}\label{ss:resprop}

Here are some properties of $\Res_b$ and $\Ind_b$.
\begin{prop}\label{Res}
\begin{itemize}
\item[(1)] Both functors $\Res_b$ and $\Ind_b$ are exact. The functor $\Res_b$ is left
adjoint to $\Ind_b$. In particular the functor $\Res_b$ preserves
projective objects and $\Ind_b$ preserves injective objects.
\item[(2)] Let $\Res^W_{W'}$
and $\Ind^W_{W'}$ be respectively the restriction and induction
functors of groups. We have the following commutative diagram
\begin{equation*}
\xymatrix{K(\calO_c(W,\h))\ar[r]_{\sim}^{\omega}\ar@<1ex>[d]^{\Res_b} & K(\C W)\ar@<1ex>[d]^{\Res^W_{W'}}\\
      K(\calO_{c'}(W',\overline{\h}))\ar[r]_{\sim}^{\omega'}\ar@<1ex>[u]^{\Ind_b} & K(\C
W')\ar@<1ex>[u]^{\Ind^W_{W'}}.}
\end{equation*}
Here the isomorphism $\omega$ (resp. $\omega'$) is given by mapping
$[\Delta(\xi)]$ to $[\xi]$ for any $\xi\in\Irr(W)$ (resp.
$\xi\in\Irr(W')$).
\end{itemize}
\end{prop}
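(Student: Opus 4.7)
\emph{Part (1).} Decompose the two functors following Section \ref{ss:resDAHA}:
\[
\Res_b=\zeta\circ E\circ R\circ\widehat{\quad}_b,\qquad \Ind_b=E^b\circ J\circ\widehat{\quad}_0\circ\zeta^{-1}.
\]
Each ingredient is exact: the completions by flatness of $\widehat{\C[\h]}_b$ and $\widehat{\C[\h]}_0$ over $\C[\h]$; the functor $R=x_{\pr}(-)$ because $x_{\pr}$ is an idempotent; $J$, $\zeta$, $\zeta^{-1}$ and $E$ because they are equivalences; and $E^b$ because on any module in $\widehat{\calO}_c(W,\h)_b$ the Euler element acts locally finitely and $E^b$ is the projection onto the sum of the ``admissible'' generalised eigenspaces. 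For the adjunction, chain the four adjunctions $(\widehat{\quad}_b,E^b)$ (recalled in Section \ref{ss:resDAHA}), $(R,J)$, $(E,\widehat{\quad}_0)$ and $(\zeta,\zeta^{-1})$, the last three coming from quasi-inverse equivalences. The final sentence is then standard: a left adjoint of an exact functor preserves projectives, and a right adjoint of an exact functor preserves injectives.

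\emph{Part (2).} Since $\{[\Delta(\xi)]\}_{\xi\in\Irr(W)}$ and $\{[\Delta(\eta)]\}_{\eta\in\Irr(W')}$ are bases of the respective Grothendieck groups, commutativity reduces to the two identities
\[
[\Res_b\Delta(\xi)]=\sum_\eta[\Res^W_{W'}\xi:\eta][\Delta(\eta)],\qquad [\Ind_b\Delta(\eta)]=\sum_\xi[\Ind^W_{W'}\eta:\xi][\Delta(\xi)].
\]
For the restriction identity, begin with $\Delta(\xi)\cong\C[\h]\otimes\xi$ as a $\C[\h]\rtimes W$-module. The isomorphisms (\ref{eq:phithetax}), (\ref{eq:varpi}), (\ref{eq:R(M)}) together with the action formulas (\ref{xform})--(\ref{yform}) show that $R(\widehat{\Delta(\xi)}_b)$ is isomorphic, as an $\widehat{H}_{c'}(W',\h)_0$-module, to $\widehat{\Delta(\xi|_{W'})}_0$, where $\Delta(\xi|_{W'})=\C[\h]\otimes\xi|_{W'}$ denotes the standard module attached to the $W'$-representation $\xi|_{W'}$ in $\calO_{c'}(W',\h)$. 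Applying the equivalence $E$ recovers $\Delta(\xi|_{W'})$, and the discussion at the end of Section \ref{rmq:resDAHA} shows that $\zeta$ sends this to the standard module $\Delta(\xi|_{W'})$ in $\calO_{c'}(W',\overline{\h})$, whose class is precisely $\sum_\eta[\Res^W_{W'}\xi:\eta][\Delta(\eta)]$. For the induction identity, $\zeta^{-1}\Delta(\eta)=\Delta(\eta)\otimes\C[\h^{W'}]$ is by (\ref{eq:zetainverse}) the standard module over $H_{c'}(W',\h)$; its completion at $0$ is $\C[[\h]]_0\otimes\eta$; the Morita equivalence $J$ combined with the $W$-action described in Proposition \ref{BEiso} identifies the result with $\widehat{\Delta(\Ind^W_{W'}\eta)}_b$; and $E^b$ recovers $\Delta(\Ind^W_{W'}\eta)$, whose class is $\sum_\xi[\Ind^W_{W'}\eta:\xi][\Delta(\xi)]$.

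The main obstacle is the verification, at the step where $E$ (respectively $E^b$) is applied, that the $\h$-locally nilpotent subspace of the completed module, endowed with the twisted Dunkl-type action given by (\ref{yform}) and Proposition \ref{BEiso}, is genuinely the standard module $\Delta(V)$ for $V=\xi|_{W'}$ (respectively $V=\Ind^W_{W'}\eta$), and not merely a module admitting a standard filtration. Concretely this reduces to checking that the subspace in question is free over $\C[\h]$ of rank $\dim V$ and that the $y$-action reduces on it to the honest DAHA formula; Lemma \ref{standfilt}(2) then forces the standard filtration, and the multiplicities can be read off from the $W'$-decomposition of $V$.
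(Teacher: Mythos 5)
The adjunction in Part~(1) is correctly derived by chaining the four adjunctions $(\widehat{\quad}_b,E^b)$, $(R,J)$, $(E,\widehat{\quad}_0)$, $(\zeta,\zeta^{-1})$, and the exactness of every ingredient except $E^b$ is handled correctly. The exactness argument for $E^b$, however, is wrong. The Euler element does not act locally finitely on modules in $\widehat{\calO}_c(W,\h)_b$: already on $\widehat{\C[\h]}_0$, the operator $\eu$ is essentially $\sum x_i\partial_{x_i}$ up to lower-order corrections, and its locally finite vectors are exactly the polynomial (equivalently, $\h$-nilpotent) elements, a proper subspace of the completion. The content of [BE, Proposition~2.4] is that the $\eu$-finite and $\h$-nilpotent parts coincide (for completion at~$0$); it does not say the whole module is $\eu$-locally finite, and $E^b$ is therefore not a projection onto generalised eigenspaces. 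For $b\neq 0$ things are worse: from $[\eu,x-x(b)]=x=(x-x(b))+x(b)$ one sees $\eu$ fails even to respect the $\mathfrak{m}_b$-adic filtration. The exactness of $\Ind_b$ is precisely the nontrivial content of [BE, Theorem~3.10(i)], and the proposal does not establish it.

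For Part~(2), the intermediate claim that $R(\widehat{\Delta(\xi)}_b)\cong\widehat{\Delta(\xi|_{W'})}_0$ as $\widehat{H}_{c'}(W',\h)_0$-modules (and the analogous claim on the induction side) is too strong: the identification holds as $\C[[\h]]_0\rtimes W'$-modules, but the $y$-action on the two sides differs by the extra summand in (\ref{yform}). Accordingly $\Res_b\Delta(\xi)$ is in general not isomorphic to the decomposable module $\Delta(\xi|_{W'})$, and the expectation voiced at the end of your proposal --- that the $y$-action ``reduces to the honest DAHA formula'' on the locally nilpotent part --- does not hold and is not needed. The correct fallback, which you essentially sketch, is: show $\Res_b\Delta(\xi)$ is free over $\C[\overline{\h}]$ with generating $W'$-module isomorphic to $\xi|_{W'}$ (this follows from (\ref{eq:R(M)}), (\ref{wform}) and Lemma~\ref{lem:eufinite}); apply Lemma~\ref{standfilt}(2) to get a standard filtration; and read off multiplicities from the $W'$-structure of the generating space, which determines them because any filtration of a $\C[\overline{\h}]\rtimes W'$-module by modules of the form $\C[\overline{\h}]\otimes\eta$ splits ($\C W'$ is semisimple). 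For the induction side the corresponding freeness of $\Ind_b\Delta(\eta)$ over $\C[\h]$ is harder to extract directly, since $E^b$ is not an equivalence, and this is where [BE, Proposition~3.14] does real work; your one-line appeal to the Morita equivalence plus ``recovering $\Delta(\Ind^W_{W'}\eta)$'' again overstates what $E^b$ produces.
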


See \cite[Proposition $3.9$, Theorem $3.10$]{BE} for (1),
\cite[Proposition $3.14$]{BE} for (2).

\subsection{Restriction of modules having a standard filtration}\label{ss:standardres}

In the rest of Section 1, we study the actions of the restriction functors on modules having a standard filtration in
$\calO_c(W,\h)$ (Proposition \ref{standard}). We will need the following lemmas.

\begin{lemme}\label{lem:MV}
Let $M$ be a module in $\calO^\Delta_c(W,\h)$.

(1) There is a finite dimensional subspace $V$ of $M$ such that $V$
is stable under the action of $\C W$ and the map
\begin{equation*}
\C[\h]\otimes V\ra M,\quad p\otimes v\mapsto pv
\end{equation*}
is an isomorphism of $\C[\h]\rtimes W$-modules.

(2) The map $\omega:K(\calO_c(W,\h))\ra K(\C W)$ in Proposition
\ref{Res}(2) satisfies
\begin{equation}\label{eq:omegakgp}
\omega([M])=[V].
\end{equation}
\end{lemme}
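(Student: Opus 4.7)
The plan is to establish (1) and (2) simultaneously by induction on the length $n$ of a standard filtration of $M$, shortening from the top at each step. The essential ingredients are the PBW description of standard modules as $\C[\h]\rtimes W$-modules, an averaging argument to promote a linear section into a $W$-equivariant one, and the five lemma to propagate the isomorphism $\C[\h]\otimes V\simra M$ across the filtration.

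For the base case $n=1$, so $M=\Delta(\xi)$ with $\xi\in\Irr(W)$, the PBW theorem for $H_c(W,\h)$ furnishes an isomorphism $\Delta(\xi)\cong\C[\h]\otimes\xi$ of $\C[\h]\rtimes W$-modules with $W$ acting diagonally, so I take $V=1\otimes\xi$; part (2) is then immediate from the definition of $\omega$ in Proposition \ref{Res}(2). For the inductive step, fix an exact sequence $0\to M'\to M\overset{\pi}\lra\Delta(\xi)\to 0$ with $M'$ admitting a standard filtration of length $n-1$, and let $V'\subset M'$ be as supplied by the induction hypothesis. To lift $V_\xi=1\otimes\xi\subset\Delta(\xi)$ into $M$, I choose any $\C$-linear section $s_0:V_\xi\to M$ of $\pi$ and average,
$$s(v)=\frac{1}{|W|}\sum_{w\in W}w\cdot s_0(w^{-1}v),$$
producing a $W$-equivariant map $s:V_\xi\to M$ with $\pi\circ s=\id_{V_\xi}$. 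Setting $\tilde V=s(V_\xi)$ gives a finite dimensional $W$-stable subspace of $M$ with $\pi|_{\tilde V}:\tilde V\simra V_\xi$, and $\tilde V\cap M'=0$, so $V:=V'\oplus\tilde V$ is the desired finite dimensional $W$-stable subspace of $M$.

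To verify (1), I apply the five lemma to the commutative diagram with exact rows
$$\xymatrix{0\ar[r]&\C[\h]\otimes V'\ar[r]\ar[d]&\C[\h]\otimes V\ar[r]\ar[d]&\C[\h]\otimes\tilde V\ar[r]\ar[d]&0\\ 0\ar[r]& M'\ar[r]& M\ar[r]^\pi&\Delta(\xi)\ar[r]& 0,}$$
whose left vertical is an isomorphism by induction and whose right vertical is an isomorphism by the base case composed with $\pi|_{\tilde V}:\tilde V\simra V_\xi$. Part (2) then follows from additivity of $\omega$ on the relation $[M]=[M']+[\Delta(\xi)]$, giving $\omega([M])=[V']+[\xi]=[V'\oplus\tilde V]=[V]$ in $K(\C W)$. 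The only delicate point is arranging $V$ to be simultaneously finite dimensional, $W$-stable, and a free $\C[\h]$-basis of $M$; the combination of the averaging construction with the five lemma delivers all three at once, so no serious further obstacle is anticipated.
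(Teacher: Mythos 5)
Your proof is correct and follows essentially the same inductive strategy as the paper: induct on the length of a standard filtration, split off the top subquotient $\Delta(\xi)$, and assemble $V=V'\oplus\tilde V$. The one technical difference is in how the section is produced. You average a $\C$-linear lift of $V_\xi$ over $W$ to obtain a $\C W$-equivariant section of $\pi$ on the finite dimensional subspace $V_\xi$ alone, then invoke the five lemma to conclude that $\C[\h]\otimes V\to M$ is an isomorphism. The paper instead observes that $\Delta(\xi)\cong\C[\h]\otimes\xi\cong(\C[\h]\rtimes W)\otimes_{\C W}\xi$ is projective as a $\C[\h]\rtimes W$-module (Maschke already being baked in), so the full quotient map $M\to\Delta(\xi)$ splits as $\C[\h]\rtimes W$-modules; taking $V=V'\oplus s(\xi)$ then gives $M=M_{l-1}\oplus s(\Delta(\xi))$ directly, with no need for the five lemma. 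Both routes are valid and use the same underlying input (Maschke plus PBW); the paper's is marginally shorter because the stronger section makes the surjectivity check immediate, while yours separates the two tasks (get a $W$-stable lift, then verify the isomorphism diagrammatically). Either way, the argument for (2) via additivity of $\omega$ and $[\tilde V]=[\xi]$ is identical.
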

\begin{proof}
Let $$0=M_0\subset M_1\subset \ldots\subset M_l=M$$ be a filtration
of $M$ such that for any $1\leqs i\leqs l$ we have
$M_i/M_{i-1}\cong\Delta(\xi_i)$ for some $\xi_i\in\Irr(W)$. We
prove (1) and (2) by recurrence on $l$. If $l=1$, then $M$ is a
standard module. Both (1) and (2) hold by definition. For $l>1$, by
induction we may suppose that there is a subspace $V'$ of $M_{l-1}$
such that the properties in (1) and (2) are satisfied for $M_{l-1}$
and $V'$. Now, consider the exact sequence
$$0\lra M_{l-1}\lra M\overset{j}\lra \Delta(\xi_l)\lra 0$$
From the isomorphism of $\C[\h]\rtimes W$-modules
$\Delta(\xi_l)\cong\C[\h]\otimes \xi$ we see that $\Delta(\xi_l)$
is a projective $\C[\h]\rtimes W$-module. Hence there exists a
morphism of $\C[\h]\rtimes W$-modules $s: \Delta(\xi_l)\ra M$ that
provides a section of $j$. Let $V=V'\oplus s(\xi_l)\subset M$. It is
stable under the action of $\C W$. The map $\C[\h]\otimes V\ra M$ in
(1) is an injective morphism of $\C[\h]\rtimes W$-modules. Its image
is $M_{l-1}\oplus s(\Delta(\xi))$, which is equal to $M$. So it is
an isomorphism. We have
$$\omega([M])=\omega([M_{l-1}])+\omega([\Delta(\xi_l)]),$$ by
assumption $\omega([M_{l-1}])=[V']$, so
$\omega([M])=[V']+[\xi_l]=[V]$.
\end{proof}

\begin{lemme}\label{lem:eufinite}
(1) Let $M$ be a
$\widehat{H}_{c}(W,\h)_0$-module free over $\C[[\h]]_0$. If there
exist generalized eigenvectors $v_1,\ldots v_n$ of $\eu$ which form
a basis of $M$ over $\C[[\h]]_0$, then for $f_1,\ldots,
f_n\in\C[[\h]]_0$ the element $m=\sum_{i=1}^nf_iv_i$ is $\eu$-finite
if and only if $f_1,\ldots,f_n$ all belong to $\C[\h]$.

(2) Let $N$ be an object in $\calO_c(W,\h)$. If $\widehat{N}_0$ is a
free $\C[[\h]]_0$-module, then $N$ is a free $\C[\h]$-module. It
admits a basis consisting of generalized eigenvectors
$v_1,\ldots,v_n$ of $\eu$.
\end{lemme}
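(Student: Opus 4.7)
The plan is to handle (1) by a direct calculation with the Euler grading on $\C[\h]$, and (2) by reducing to (1) via a carefully chosen basis of $\widehat{N}_0$ coming from $N/\mathfrak{m} N$, where $\mathfrak{m}\subset\C[\h]$ is the maximal ideal at $0$.

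For (1), I start from $[\eu,x_i]=x_i$ of (\ref{euler2}), which extends by derivation to $[\eu,p]=dp$ for any $p\in\C[\h]$ homogeneous of degree $d$. Writing $f_i=\sum_{d\geqs 0}f_{i,d}$ in homogeneous pieces and using $(\eu-\lambda_i)^Nv_i=0$ for $N$ large, a short computation yields $(\eu-\lambda_i-d)^N(f_{i,d}v_i)=f_{i,d}(\eu-\lambda_i)^Nv_i=0$, so each $f_{i,d}v_i$ is a generalized $\eu$-eigenvector of eigenvalue $\lambda_i+d$. Regrouping $m=\sum_{i,d}f_{i,d}v_i$ by eigenvalue gives $m=\sum_\mu m_\mu$ with $m_\mu=\sum_{i:\mu-\lambda_i\in\Z_{\geqs 0}}f_{i,\mu-\lambda_i}v_i$, a finite sum lying in the generalized $\mu$-eigenspace. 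Local finiteness of $\eu$ on $m$ is equivalent to only finitely many $m_\mu$ being nonzero. Since $v_1,\ldots,v_n$ are $\C[[\h]]_0$-linearly independent, $m_\mu=0$ iff every $f_{i,\mu-\lambda_i}$ vanishes; hence the set of nonzero weights equals $\bigcup_i(\lambda_i+\{d: f_{i,d}\neq 0\})$, which is finite iff each $f_i$ has finite homogeneous support, i.e., $f_i\in\C[\h]$.

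For (2), I produce a $\C[[\h]]_0$-basis of $\widehat{N}_0$ consisting of generalized $\eu$-eigenvectors that live in $N$, then apply (1). Since $N\in\calO_c(W,\h)$, $\eu$ acts locally finitely on $N$, yielding $N=\bigoplus_\mu N_\mu$ with each $N_\mu$ finite-dimensional (standard modules $\Delta(\xi)=\C[\h]\otimes\xi$ have finite-dimensional weight spaces and $N$ has finite length), and with weights bounded below. The relation $[\eu,\mathfrak{m}]\subset\mathfrak{m}$ makes $\mathfrak{m} N$ an $\eu$-stable subspace, so the weight decomposition descends to $N/\mathfrak{m} N=\widehat{N}_0/\mathfrak{m}\widehat{N}_0$. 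I then pick a basis $\bar w_1,\ldots,\bar w_n$ of $N/\mathfrak{m} N$ consisting of generalized $\eu$-eigenvectors with eigenvalues $\lambda_1,\ldots,\lambda_n$ and lift each $\bar w_i$ to $w_i\in N_{\lambda_i}$, using surjectivity of $N_{\lambda_i}\twoheadrightarrow(N/\mathfrak{m} N)_{\lambda_i}$. By complete-local Nakayama applied to the free rank-$n$ $\C[[\h]]_0$-module $\widehat{N}_0$, the $w_i$'s form a $\C[[\h]]_0$-basis of $\widehat{N}_0$.

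To conclude, (1) identifies the $\eu$-locally finite part of $\widehat{N}_0$ with $\bigoplus_i\C[\h]w_i$. The natural map $N\to\widehat{N}_0$ is injective: $\mathfrak{m}^k$ raises $\eu$-weight by at least $k$ and weights of $N$ are bounded below, so $\bigcap_k\mathfrak{m}^k N=0$. Since $N$ consists of $\eu$-finite elements inside $\widehat{N}_0$, we get $N\subseteq\bigoplus_i\C[\h]w_i$, and the reverse inclusion is immediate from $w_i\in N$. Hence $N=\bigoplus_i\C[\h]w_i$ is free over $\C[\h]$ with a basis of generalized $\eu$-eigenvectors. I expect the main subtlety to be in (1): making rigorous that the formal sum $\sum_\mu m_\mu$ recovers $m$ in the completion, and that $\C[[\h]]_0$-linear independence of the $v_i$'s separates the individual homogeneous components of distinct $f_i$'s; the rest is careful weight-decomposition bookkeeping together with Nakayama.
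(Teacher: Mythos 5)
Your argument is correct, and it departs from the paper's proof in a useful way: it is self-contained where the paper defers to external results. For part (1), the paper simply cites the proof of \cite[Theorem 2.3]{BE}, whereas you carry out the computation directly using $[\eu,x_i]=x_i$, the homogeneous decomposition of $\C[[\h]]_0$, and the observation that $\C[[\h]]_0$-linear independence of the $v_i$ together with the degree decomposition lets you separate the generalized-eigenvalue components of $m$; the point you flag as the ``main subtlety'' (that a convergent sum of generalized eigenvectors with distinct eigenvalues vanishes iff each term does, read off degree-by-degree) is exactly the crux and your handling of it is fine. For part (2), the skeleton is identical to the paper's (surjection onto $\widehat{N}_0/\mathfrak{m}\widehat{N}_0$, lift to $\eu$-generalized eigenvectors, Nakayama, then apply (1) to identify the $\eu$-finite part with $\bigoplus_i\C[\h]w_i$), but the paper then closes the argument by invoking \cite[Proposition 2.4]{BE} ($\h$-nilpotence $=\eu$-finiteness in $\widehat\calO_0$) and \cite[Theorem 3.2]{BE} ($N=E(\widehat N_0)$), while you replace both citations by a direct sandwich: $N\hookrightarrow\widehat{N}_0$ is injective because weights of $N$ are bounded below and $\mathfrak{m}^k$ raises $\eu$-weight by at least $k$ (so $\bigcap_k\mathfrak{m}^kN=0$), the image of $N$ lands in the $\eu$-finite part since the map is $\eu$-equivariant and $N$ is $\eu$-locally finite, and $\bigoplus_i\C[\h]w_i\subset N$ is obvious. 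Each approach has its virtue: the paper's is shorter given that it already has \cite{BE} at hand, while yours is more elementary and makes visible exactly which structural facts about $\calO$ (local finiteness of $\eu$, weights bounded below with finite-dimensional weight spaces) are doing the work.
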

\begin{proof}
(1) It follows from the proof of \cite[Theorem 2.3]{BE}.

(2) Since $N$ belongs to $\calO_c(W,\h)$, it is finitely generated
over $\C[\h]$. Denote by $\mathfrak{m}$ the maximal ideal of $\C[[\h]]_0$.
The canonical map $N\ra\widehat{N}_0/\mathfrak{m}\widehat{N}_0$ is
surjective. So there exist $v_1,\ldots,v_n$
in $N$ such that their images form a basis of
$\widehat{N}_0/\mathfrak{m}\widehat{N}_0$ over $\C$. Moreover, we may choose $v_1,\ldots,v_n$ to be generalized eigenvectors of $\eu$, because the $\eu$-action on $N$ is locally finite. Since
$\widehat{N}_0$ is free over $\C[[\h]]_0$, Nakayama's lemma yields
that $v_1,\ldots,v_n$ form a basis of $\widehat{N}_0$ over
$\C[[\h]]_0$. By part (1) the set $N'$ of $\eu$-finite elements in
$\widehat{N}_0$ is the free $\C[\h]$-submodule generated by
$v_1,\ldots, v_n$. On the other hand, since $\widehat{N}_0$ belongs
to $\widehat{\calO}_c(W,\h)_0$, by \cite[Proposition 2.4]{BE} an
element in $\widehat{N}_0$ is $\h$-nilpotent if and only if it is
$\eu$-finite. So $N'=E(\widehat{N}_0).$ On the other hand, the canonical inclusion $N\subset E(\widehat{N}_0)$ is an equality by \cite[Theorem 3.2]{BE}. Hence $N=N'$. This implies that $N$ is free over
$\C[\h]$, with a basis given by $v_1,\ldots,v_n$, which are
generalized eigenvectors of $\eu$.
\end{proof}

\begin{prop}\label{standard}
Let $M$ be an object in $\calO^\Delta_c(W,\h)$.

(1) The object $\Res_b(M)$ has a standard filtration.

(2) Let $V$ be a subspace of $M$ that has the properties of Lemma
\ref{lem:MV}(1). Then there is an isomorphism of
$\C[\overline{\h}]\rtimes W'$-modules
\begin{equation*}
\Res_b(M)\cong \C[\overline{\h}]\otimes\Res^{W}_{W'}(V).
\end{equation*}
\end{prop}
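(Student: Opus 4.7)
The plan is to trace $\Res_b=\zeta\circ E\circ R\circ\widehat{\quad}_b$ on the concrete model of $M$ supplied by Lemma \ref{lem:MV}(1), and then extract the standard filtration of $\Res_b(M)$ from freeness of its completion at $0$. Fix $V\subseteq M$ as in Lemma \ref{lem:MV}(1), so that $M\cong\C[\h]\otimes V$ as a $\C[\h]\rtimes W$-module, and hence $\widehat{M}_b\cong\widehat{\C[\h]}_b\otimes V$. Combining the block decomposition (\ref{eq:varpi}) of $\widehat{\C[\h]}_b$, the action of the idempotent $x_{\pr}$, and the shift $\phi_1^\ast$ with formulas (\ref{xform}) and (\ref{wform}), one obtains an isomorphism
\begin{equation*}
R(\widehat{M}_b)\cong\C[[\h]]_0\otimes V
\end{equation*}
of $\C[[\h]]_0\rtimes W'$-modules, where $W'$ acts diagonally through $\Res^W_{W'}$. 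In particular $R(\widehat{M}_b)$ is free of finite rank over $\C[[\h]]_0$.

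For part (1), let $N=E\circ R(\widehat{M}_b)\in\calO_{c'}(W',\h)$. Since $\widehat{N}_0=R(\widehat{M}_b)$ is free over $\C[[\h]]_0$, Lemma \ref{lem:eufinite}(2) (applied to $(W',\h,c')$) implies that $N$ is free over $\C[\h]$, so Lemma \ref{standfilt}(2) gives $N$ a standard filtration in $\calO_{c'}(W',\h)$. Because the equivalence $\zeta$ sends standard modules to standard modules (end of Section \ref{rmq:resDAHA}), $\Res_b(M)=\zeta(N)$ inherits a standard filtration, proving (1).

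For part (2), having established that $\Res_b(M)\in\calO^\Delta_{c'}(W',\overline{\h})$, apply Lemma \ref{lem:MV}(1) to this module to obtain a $\C W'$-stable subspace $U\subset\Res_b(M)$ inducing an isomorphism $\Res_b(M)\cong\C[\overline{\h}]\otimes U$ of $\C[\overline{\h}]\rtimes W'$-modules. It remains to identify $U$. By Lemma \ref{lem:MV}(2) and Proposition \ref{Res}(2),
\begin{equation*}
[U]=\omega'([\Res_b(M)])=\Res^W_{W'}(\omega([M]))=\Res^W_{W'}([V])=[\Res^W_{W'}(V)]
\end{equation*}
in $K(\C W')$, whence $U\cong\Res^W_{W'}(V)$ as $W'$-representations. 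This gives the desired isomorphism.

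The main obstacle lies in part (1): once $\Res_b(M)$ is known to have a standard filtration, part (2) reduces essentially to the character computation furnished by Proposition \ref{Res}(2) and Lemma \ref{lem:MV}(2). The work is thus concentrated in the completion-theoretic step, where one must check carefully that the restriction via $x_{\pr}$ followed by the shift $\phi_1^\ast$ transforms $\widehat{M}_b\cong\widehat{\C[\h]}_b\otimes V$ into a \emph{free} $\C[[\h]]_0$-module of the expected rank, so that Lemma \ref{lem:eufinite}(2) can be applied to descend freeness from the completion back to $N$ itself.
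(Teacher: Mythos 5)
Your proposal is correct and follows essentially the same route as the paper's own proof: for part (1) you use the model $M\cong\C[\h]\otimes V$, the idempotent $x_{\pr}$ and the shift $\phi_1^\ast$ to show $R(\widehat{M}_b)$ is free over $\C[[\h]]_0$, then invoke Lemma \ref{lem:eufinite}(2), Lemma \ref{standfilt}(2) and the fact that $\zeta$ preserves standard modules; for part (2) you reapply Lemma \ref{lem:MV} and identify the resulting $W'$-module by the Grothendieck-group computation from Proposition \ref{Res}(2). The only extra touch is your observation that the $W'$-action on $\C[[\h]]_0\otimes V$ is the diagonal one through $\Res^W_{W'}$, which is correct but which you (like the paper) do not end up needing, since part (2) is handled via the $K$-group argument anyway.
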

\begin{proof}
(1) By the end of Section \ref{rmq:resDAHA} the equivalence
$\zeta$ maps a standard module in $\calO_{c'}(W',\h)$ to a standard
one in $\calO_{c'}(W',\overline{\h})$. Hence to prove that
$\Res_b(M)=\zeta\circ E\circ R(\widehat{M}_b)$ has a standard
filtration, it is enough to show that $N=E\circ R(\widehat{M}_b)$
has one. We claim that the module $N$ is free over $\C[\h]$. So the
result follows from Lemma \ref{standfilt}(2).

Let us prove the claim. Recall from (\ref{eq:R(M)}) that we have $R(\widehat{M}_b)=\phi_1^\ast(x_{\pr}\widehat{M}_b)$ as $\C[[\h]]_0\rtimes W'$-modules. Using the isomorphism of $\C[\h]\rtimes W$-modules
$M\cong\C[\h]\otimes V$ given in Lemma \ref{lem:MV}(1), we deduce an isomorphism of $\C[[\h]]_0\rtimes W'$-modules
\begin{eqnarray*}
R(\widehat{M}_b)&\cong&\phi_1^\ast(x_{\pr}(\widehat{\C[\h]}_b\otimes V))\nonumber\\
&\cong & \C[[\h]]_0\otimes V.\label{completeiso}
\end{eqnarray*}
So the module $R(\widehat{M}_b)$ is free over $\C[[\h]]_0$. The completion
of the module $N$ at $0$ is isomorphic to $R(\widehat{M}_b)$. By
Lemma \ref{lem:eufinite}(2) the module $N$ is free over $\C[\h]$.
The claim is proved.

(2) Since $\Res_b(M)$ has a standard filtration, by Lemma
\ref{lem:MV} there exists a finite dimensional vector space
$V'\subset \Res_b(M)$ such that $V'$ is stable under the action of
$\C W'$ and we have an isomorphism of $\C[\overline{\h}]\rtimes
W'$-modules
\begin{equation*}
\Res_b(M)\cong \C[\overline{\h}]\otimes V'.
\end{equation*}
Moreover, we have $\omega'([\Res_b(M)])=[V']$ where $\omega'$ is the
map in Proposition \ref{Res}(2). The same proposition yields that
$\Res^W_{W'}(\omega[M])=\omega'([\Res_b(M)])$. Since
$\omega([M])=[V]$ by (\ref{eq:omegakgp}), the $\C W'$-module $V'$ is
isomorphic to $\Res^W_{W'}(V)$. So we have an isomorphism of
$\C[\overline{\h}]\rtimes W'$-modules
$$\Res_b(M)\cong \C[\overline{\h}]\otimes\Res^{W}_{W'}(V).$$
\end{proof}

\section{KZ commutes with restriction functors}\label{s:KZcommute}

In this section, we relate the restriction and induction functors
for rational DAHA's to the corresponding functors for Hecke algebras
via the functor $\KZ$. We will always assume that the Hecke algebras
have the same dimension as the corresponding group algebras. Thus
the Knizhnik-Zamolodchikov functors admit the properties recalled in
Section \ref{ss:KZ}.

\subsection{}\label{ss: thmiso}

Let $W$ be a complex reflection group acting on $\h$. Let $b$ be a
point in $\h$ and let $W'$ be its stabilizer in $W$. We will
abbreviate $\KZ=\KZ(W,\h)$, $\KZ'=\KZ(W',\overline{\h})$.
\begin{thm}\label{iso}
There is an isomorphism of functors
\begin{equation*}
 \KZ'\circ\Res_b\cong\Resh\circ\KZ.
\end{equation*}
\end{thm}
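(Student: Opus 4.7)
The plan is to reduce, via Lemma \ref{projiso}, to constructing the natural isomorphism on the subcategory $\Proj_c(W,\h)$, and then to produce the isomorphism geometrically by matching the monodromy of the KZ local system $P^\nabla_{\h_{reg}}$ restricted to a small $W'$-invariant ball around $b$ with that of the Dunkl local system defining $\KZ'(\Res_b(P))$.

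Both compositions $\KZ'\circ\Res_b$ and $\Resh\circ\KZ$ are exact, hence right exact, functors from $\calO_c(W,\h)$ to $\Hecke_{q'}(W')\modu$. By Lemma \ref{projiso} it therefore suffices to construct a natural isomorphism on the full subcategory $\Proj_c(W,\h)$; a standard right-exactness argument applied to a two-term projective presentation of an arbitrary module extends this to an isomorphism on all of $\calO_c(W,\h)$. So fix $P\in\Proj_c(W,\h)$. By definition $\Resh(\KZ(P))$ is the pullback of the $B_W$-representation $P^\nabla_{\h_{reg}}$ along $\imath=\jmath\circ\kappa\circ\ell: B_{W'}\hookrightarrow B_W$ of Section \ref{ss:resHecke}, and by the very construction of $\jmath$, $\kappa$ and $\ell$ this is canonically the monodromy representation of the $W'$-equivariant local system $P^\nabla_{\h_{reg}}|_{\Omega\cap\h_{reg}}$, with $\Omega$ the ball from (\ref{eq:omega}), transported to $\overline{\h}_{reg}$ through the trivial fibration $p:\h'_{reg}\ra\overline{\h}_{reg}$. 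On the other hand, by Lemma \ref{standfilt}(1) and Proposition \ref{standard}(1), $\Res_b(P)$ is free over $\C[\overline{\h}]$, and hence $\KZ'(\Res_b(P))$ is simply the space of horizontal sections of the Dunkl connection on $\Res_b(P)_{\overline{\h}_{reg}}$.

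The heart of the proof is to identify these two $W'$-equivariant local systems analytically on $\Omega\cap\h_{reg}$. Starting from the Bezrukavnikov--Etingof isomorphism $\Theta$ of Proposition \ref{BEiso} together with the formulas (\ref{xform})--(\ref{yform}) for the action of $\widehat{H}_{c'}(W',\h)_0$ on $R(\widehat{P}_b)=x_{\pr}\widehat{P}_b$, the Dunkl operator for $y^{(b)}_a$ differs from the naive restriction of the operator for $y_a$ on $P_{\h_{reg}}$ only by the summation over $s\in\mathcal{S}\setminus W'$ in (\ref{yform}); those extra terms involve factors $1/(x_{\al_s}+\al_s(b))$ which are single-valued holomorphic on $\Omega\cap\h_{reg}$ (since $\Omega$ avoids every hyperplane outside $\mathcal{A}'$), and therefore contribute nothing to the monodromy. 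Applying $E$ and $\zeta$ identifies what remains with the Dunkl connection defining $\KZ'(\Res_b(P))$ on the image of $\Omega\cap\h_{reg}$ in $\overline{\h}_{reg}$. Passing to horizontal sections and using Proposition \ref{KZ} to transport the $B_{W'}$-action to the $\Hecke_{q'}(W')$-action via the isomorphism $\rho$ yields the desired isomorphism $\KZ'(\Res_b(P))\cong\Resh(\KZ(P))$, manifestly natural in $P$.

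I expect the main obstacle to be precisely this analytic matching of connections: one has to be vigilant about the interaction between the formal completion $\widehat{\,}_b$, the analytic topology on $\Omega\cap\h_{reg}$ entering the definition of the KZ functor, the idempotent $x_{\pr}$, and the equivalence $\zeta$, and in particular to verify rigorously that the ``extra'' terms indexed by reflections in $\mathcal{S}\setminus W'$ truly drop out of the monodromy computation. The other ingredients---the reduction to projectives, the topological description of $\imath$ and of $\Resh$, and the passage from monodromy of the Dunkl local system to a Hecke action via $\rho$---are essentially bookkeeping from the earlier sections of the paper.
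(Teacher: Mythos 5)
Your overall architecture is the same as the paper's: reduce to $\Proj_c(W,\h)$ via Lemma \ref{projiso}, use the freeness of $\Res_b(P)$ over $\C[\overline{\h}]$ from Lemma \ref{standfilt}(1) and Proposition \ref{standard}(1), then identify the two local systems on $\Omega\cap\h_{reg}$ by an explicit comparison of Dunkl connections via the formulas (\ref{xform})--(\ref{yform}). But the step you flag as ``the main obstacle'' --- the actual analytic matching --- is precisely where the argument is missing, and the one-sentence hint you offer for why the extra terms in (\ref{yform}) drop out does not hold up as stated.

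Concretely, the extra terms $\sum_{s\notin W'}\frac{2c_s}{1-\lam_s}\frac{\al_s(a)}{x_{\al_s}}s$ do \emph{not} disappear from the comparison merely because the coefficients $1/(x_{\al_s}+\al_s(b))$ are holomorphic on $\Omega$: the factor $s$ for $s\notin W'$ does not even preserve the subspace $x_{\pr}\widehat{P}_b$ on which the $\widehat{H}_{c'}(W',\h)_0$-module structure lives. What actually kills these terms is the algebraic identity $x_{\pr}u\,x_{\pr}=0$ for all $u\in W\setminus W'$ (equation (\ref{killwform}) of Section \ref{ss:BEiso}); this is the pivot of the computation (\ref{conncal}) in the paper's Step 4, which establishes the exact equality $\nabla^{(b)}_a(x_{\pr}m)=x_{\pr}\nabla_a(m)$ rather than an equality ``up to a holomorphic correction''. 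Your argument, as written, never invokes the idempotent identity and so never closes this gap.

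A second omission: even once compatibility of connections is established on the formal disc at $0$ (the paper's $\widehat{\Psi}$ in Step 4), one still needs to pass from a formal isomorphism to an isomorphism of local systems. This is the content of Lemma \ref{monodromie}, which rests on the Comparison Theorem of Kashiwara--Kawai for regular holonomic systems. That step is absent from your sketch. Without it, having agreement of Taylor expansions at one point does not yield agreement of monodromy representations on $\pi_1((\Omega\cap\h_{reg})/W')$. In short, your plan correctly names the ingredients (the Bezrukavnikov--Etingof isomorphism, the idempotent $x_{\pr}$, the ball $\Omega$, the equivalence $\zeta$, the reduction to projectives) and correctly locates the difficulty, but it neither carries out the Dunkl-operator computation with the idempotent nor supplies the formal-to-analytic comparison, both of which are essential.
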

\begin{proof}
We will regard $\KZ: \calO_c(W,\h)\ra \Hecke_q(W)\modu$ as a functor
from $\calO_c(W,\h)$ to $B_W\modu$ in the obvious way. Similarly we
will regard $\KZ'$ as a functor to $B_{W'}\modu$. Recall the
inclusion $\imath: B_{W'}\hookrightarrow B_W$ from
(\ref{heckeres4}). The theorem amounts to prove that for any $M\in
\calO_c(W,\h)$ there is a natural isomorphism of $B_{W'}$-modules
\begin{equation}\label{eq:thm}
\KZ'\circ\Res_b(M)\cong\imath_\ast\circ\KZ(M).\end{equation}

\emph{Step 1.} Recall the functor $\zeta:
\calO_{c'}(W',\h)\ra\calO_{c'}(W',\overline{\h})$ from (\ref{zeta}) and its quasi-inverse $\zeta^{-1}$ in (\ref{eq:zetainverse}).
Let $$N=\zeta^{-1}(\Res_b(M)).$$ We have
$N\cong\Res_b(M)\otimes\C[\h^{W'}].$ Since the canonical
epimorphism $\h\ra\overline{\h}$ induces a fibration
$\h'_{reg}\ra\overline{\h}_{reg}$, see Section \ref{ss:resHecke}, we have
\begin{equation}\label{h}
N_{\h'_{reg}}\cong
\Res_b(M)_{\overline{\h}_{reg}}\otimes\C[\h^{W'}].\end{equation} By
Dunkl isomorphisms, the left hand side is a $\mathcal{D}(\h'_{reg})\rtimes
W'$-module while the right hand side is a
$(\mathcal{D}(\overline{\h}_{reg})\rtimes
W')\otimes\mathcal{D}(\h^{W'})$-module. Identify these two algebras
in the obvious way. The isomorphism (\ref{h}) is compatible with the
$W'$-equivariant $\mathcal{D}$-module structures. Hence we have
$$(N_{\h'_{reg}})^\nabla\cong(\Res_b(M)_{\overline{\h}_{reg}})^\nabla\otimes
\C[\h^{W'}]^\nabla.$$ Since $\C[\h^{W'}]^\nabla=\C$, this yields a
natural isomorphism
\begin{equation*}
\ell_\ast\circ\KZ(W',\h)(N)\cong\KZ'\circ\Res_b(M),
\end{equation*}
where $\ell$ is the homomorphism defined in (\ref{heckeres1}).

\emph{Step 2.} Consider the $W'$-equivariant algebra isomorphism
$$\phi: \C[\h]\ra\C[\h],\quad x\mapsto x+x(b)\text{ for } x\in\h^\ast.$$
It induces an isomorphism
$\hat{\phi}:\C[[\h]]_b\overset{\sim}\ra\C[[\h]]_0$. The latter
yields an algebra isomorphism
$$\C[[\h]]_b\otimes_{\C[\h]}\C[\h_{reg}]\simeq
\C[[\h]]_0\otimes_{\C[\h]}\C[\h'_{reg}].$$ To see this note first
that by definition, the left hand side is $\C[[\h]]_b[\al_{s}^{-1},
s\in\mathcal{S}]$. For $s\in\mathcal{S}$, $s\notin W'$ the element
$\al_{s}$ is invertible in $\C[[\h]]_b$, so we have
$$\C[[\h]]_b\otimes_{\C[\h]}\C[\h_{reg}]=\C[[\h]]_b[\al_{s}^{-1},
s\in\mathcal{S}\cap W'].$$ For $s\in \mathcal{S}\cap W'$ we have
$\al_{s}(b)=0$, so $\hat{\phi}(\al_s)=\al_s$. Hence
\begin{eqnarray*}
\hat{\phi}(\C[[\h]]_b)[\hat{\phi}(\al_{s})^{-1},
s\in\mathcal{S}\cap W']&=&\C[[\h]]_0[\al_{s}^{-1},
s\in\mathcal{S}\cap
W']\\
&=&\C[[\h]]_0\otimes_{\C[\h]}\C[\h'_{reg}].
\end{eqnarray*}

\emph{Step 3.} We will assume in Steps 3, 4, 5 that $M$ is a module
in $\calO^\Delta_c(W,\h)$. In this step we prove that $N$ is
isomorphic to $\phi^\ast(M)$ as $\C[\h]\rtimes W'$-modules. Let
$V$ be a subspace of $M$ as in Lemma \ref{lem:MV}(1). So we have an
isomorphism of $\C[\h]\rtimes W$-modules
\begin{equation}\label{inducefilt}
M\cong \C[\h]\otimes V.
\end{equation}
Also, by Proposition \ref{standard}(2) there is an isomorphism of
$\C[\h]\rtimes W'$-modules
\begin{eqnarray*}
N&\cong&\C[\h]\otimes\Res^W_{W'}(V).
\end{eqnarray*}
So $N$ is isomorphic to $\phi^\ast(M)$ as $\C[\h]\rtimes
W'$-modules.

\emph{Step 4.} In this step we compare
$(\widehat{(\phi^\ast(M))}_0)_{\h'_{reg}}$ and
$(\widehat{N}_0)_{\h'_{reg}}$ as
$\widehat{\calD(\h'_{reg})}_0$-modules. The definition of these
$\widehat{\calD(\h'_{reg})}_0$-module structures will be given below
in terms of connections. By (\ref{Resb}) we have $N=E\circ
R(\widehat{M}_b)$, so we have $\widehat{N}_0\cong
R(\widehat{M}_b).$ Next, by (\ref{eq:R(M)}) we have an isomorphism of $\C[[\h]]_0\rtimes W'$-modules
\begin{eqnarray*}
  R(\widehat{M}_b)&=&\hat{\phi}^\ast (x_{\pr}(\widehat{M}_b))\\
  &=&\widehat{(\phi^\ast(M))}_0.
\end{eqnarray*}
So we get an isomorphism of $\C[[\h]]_0\rtimes W'$-modules
$$\hat\Psi: \widehat{(\phi^\ast(M))}_0\ra\widehat{N}_0.$$
Now, let us consider connections on these modules. Note that by Step $2$ we have
$$(\widehat{(\phi^\ast(M))}_0)_{\h'_{reg}}
=\hat{\phi}^\ast(x_{\pr}(\widehat{M}_b)_{\h_{reg}}).$$
Write $\nabla$ for the connection on $M_{\h_{reg}}$ given by the Dunkl isomorphism for $H_c(W,\h_{reg})$. We equip
$(\widehat{(\phi^\ast(M))}_0)_{\h'_{reg}}$ with the connection $\tilde{\nabla}$ given by
$$\tilde{\nabla}_a(x_{\pr}m)=x_{\pr}(\nabla_a(m)),\quad\forall\ m\in(\widehat{M}_b)_{\h_{reg}},\ a\in\h.$$ Let $\nabla^{(b)}$ be the connection on $N_{\h'_{reg}}$ given by the Dunkl isomorphism for $H_{c'}(W',\h'_{reg})$. This restricts to a connection on $(\widehat{N}_0)_{\h'_{reg}}$. We claim that $\Psi$ is compatible with these connections, i.e., we have
\begin{equation}\label{but}
\nabla_a^{(b)}(x_{\pr} m)=x_{\pr}\nabla_a(m),\quad\forall\ m\in (\widehat{M}_b)_{\h_{reg}}.
\end{equation} Recall the subspace $V$ of $M$ from Step 3. By
Lemma \ref{lem:MV}(1) the map
$$(\widehat{\C[\h]}_b\otimes_{\C[\h]}\C[\h_{reg}])\otimes V\ra
(\widehat{M_b})_{\h_{reg}},\quad p\otimes v\mapsto pv$$ is a
bijection. So it is enough to prove (\ref{but}) for $m=pv$ with
$p\in\widehat{\C[\h]}_b\otimes_{\C[\h]}\C[\h_{reg}]$, $v\in V$. We have
\begin{eqnarray}\label{conncal}
\nabla^{(b)}_a(x_{\pr}p v)&=&(y^{(b)}_a-\sum_{s\in\mathcal{S}\cap
W'}\frac{2c_s}{1-\lam_s}\frac{\al_s(a)}
{x_{\al_s}^{(b)}}(s-1))(x_{\pr}p v)\nonumber\\
&=&x_{\pr}(y_a+\sum_{s\in\mathcal{S},s\notin
W'}\frac{2c_s}{1-\lam_s}\frac{\al_s(a)}{x_{\al_s}}-\nonumber\\
&&-\sum_{s\in\mathcal{S}\cap W'}\frac{2c_s}{1-\lam_s}\frac{\al_s(a)}{x_{\al_s}}(s-1))(x_{\pr}p v)\nonumber\\
&=&x_{\pr}(\nabla_a+\sum_{s\in\mathcal{S},s\notin W'}\frac{2c_s}{1-\lam_s}\frac{\al_s(a)}{x_{\al_s}}s)(x_{\pr}p v)\nonumber\\
&=&x_{\pr}\nabla_a(x_{\pr}p v) .
\end{eqnarray}
Here the first equality is by the Dunkl isomorphism for
$H_{c'}(W',\h'_{reg})$. The second is by (\ref{xform}),
(\ref{wform}), (\ref{yform}) and the fact that $x_{\pr}^2=x_{\pr}$.
The third is by the Dunkl isomorphism for $H_{c}(W,\h_{reg})$. The last is by (\ref{killwform}). Next, since $x_{\pr}$ is the idempotent in $\widehat{\C[\h]}_b$ corresponding to the component $\C[[\h]]_b$ in the decomposition (\ref{eq:varpi}), we have
\begin{eqnarray*}
\nabla_a(x_{\pr}p
v)&=&(\partial_a(x_{\pr}p))v+x_{\pr}p\,(\nabla_av)\\
&=&x_{\pr}(\partial_a(p))v+x_{\pr}p\,(\nabla_av)\\
&=&x_{\pr}\nabla_a(p v).
\end{eqnarray*}
Together with (\ref{conncal}) this implies that
$$\nabla^{(b)}_a(x_{\pr}p v)=x_{\pr}\nabla_a(p v).$$
So (\ref{but}) is proved.

\emph{Step 5.} In this step we prove isomorphism (\ref{eq:thm}) for
$M\in\calO^\Delta_c(W,\h)$. Here we need some more notation. For
$X=\h$ or $\h'_{reg}$, let $U$ be an open analytic subvariety of $X$, write
$i:U\hookrightarrow X$ for the canonical embedding. For $F$ an analytic coherent sheaf
on $X$ we write $i^\ast (F)$ for the restriction of $F$ to $U$. If
$U$ contains $0$, for an analytic locally free sheaf $E$ over $U$,
we write $\widehat{E}$ for the restriction of $E$ to the formal disc
at $0$.

Let $\Omega\subset \h$ be the open ball defined in (\ref{eq:omega}).
Let $f:\h\ra\h$ be the morphism defined by $\phi$. It maps
$\Omega$ to an open ball $\Omega_0$ centered at $0$. We have
$$f(\Omega\cap\h_{reg})=\Omega_0\cap\h'_{reg}.$$
Let $u:\Omega_0\cap\h'_{reg}\hookrightarrow\h$ and $v:
\Omega\cap\h_{reg}\hookrightarrow\h$ be the canonical embeddings. By
Step 3 there is an isomorphism of $W'$-equivariant analytic locally
free sheaves over $\Omega_0\cap\h'_{reg}$
$$u^\ast (N^{an})\cong
\phi^\ast(v^\ast (M^{an})).$$ By Step 4 there is an isomorphism
$$\widehat{u^\ast (N^{an})}\simra\widehat{\phi^\ast(v^\ast
(M^{an}))}$$ which is compatible with their connections. It follows
from Lemma \ref{monodromie} below that there is an isomorphism
$$(u^\ast (N^{an}))^{\nabla^{(b)}}
\cong \phi^\ast((v^\ast (M^{an}))^{\nabla}).$$ Since
$\Omega_0\cap\h'_{reg}$ is homotopy equivalent to $\h'_{reg}$ via
$u$, the left hand side is isomorphic to $(N_{\h'_{reg}})^{\nabla^{(b)}}$. So
we have
\begin{equation*}
\kappa_\ast\circ\jmath_\ast\circ\KZ(M)\cong\KZ(W',\h)(N),
\end{equation*}
where $\kappa$, $\jmath$ are as in (\ref{heckeres2}), (\ref{heckeres3}). Combined with Step 1 we have the following isomorphisms
\begin{eqnarray}\label{i}
\KZ'\circ\Res_b(M)&\cong&\ell_\ast\circ\KZ(W',\h)(N)\nonumber\\
&\cong&\ell_\ast\circ\kappa_\ast\circ\jmath_\ast\circ\KZ(M)\\
&=&\imath_\ast\circ\KZ(M).\nonumber
\end{eqnarray}
They are functorial on $M$.

\begin{lemme}\label{monodromie}
Let $E$ be an analytic locally free sheaf over the complex manifold
$\h'_{reg}$. Let $\nabla_1$, $\nabla_2$ be two integrable
connections on $E$ with regular singularities. If there exists an
isomorphism $\hat{\psi}:(\widehat{E},\nabla_1)\ra
(\widehat{E},\nabla_2)$, then the local systems $E^{\nabla_1}$ and
$E^{\nabla_2}$ are isomorphic.
\end{lemme}
\begin{proof}
Write $\End(E)$ for the sheaf of endomorphisms of $E$. Then
$\End(E)$ is a locally free sheaf over $\h'_{reg}$. The connections
$\nabla_1$, $\nabla_2$ define a connection $\nabla$ on $\End(E)$ as follows,
$$\nabla: \End(E)\ra\End(E),\quad f\mapsto \nabla_2\circ f-f\circ\nabla_1.$$
So the isomorphism $\hat{\psi}$ is a horizontal section of
$(\widehat{\End(E)},\nabla)$. Let $(\End(E)^\nabla)_0$ be the set of
germs of horizontal sections of $(\End(E),\nabla)$ on zero. By the
Comparison theorem \cite[Theorem 6.3.1]{KK} the canonical map
$(\End(E)^\nabla)_0\ra (\widehat{\End(E)})^\nabla$ is bijective.
Hence there exists a holomorphic isomorphism $\psi: (E,\nabla_1)\ra
(E,\nabla_2)$ which maps to $\hat{\psi}$. Now, let $U$ be an open
ball in $\h'_{reg}$ centered at $0$ with radius $\varepsilon$ small
enough such that the holomorphic isomorphism $\psi$ converges in
$U$. Write $E_U$ for the restriction of $E$ to $U$. Then $\psi$
induces an isomorphism of local systems $(E_U)^{\nabla_1}\cong
(E_U)^{\nabla_2}$. Since $\h'_{reg}$ is homotopy equivalent to $U$,
we have $$E^{\nabla_1}\cong E^{\nabla_2}.$$
\end{proof}

\emph{Step 6.} Finally, write $I$ for the inclusion of
$\Proj_c(W,\h)$ into $\calO_c(W,\h)$. By Lemma \ref{standfilt}(1)
any projective object in $\calO_c(W,\h)$ has a standard filtration,
so (\ref{i}) yields an isomorphism of functors
$$\KZ'\circ\Res_b\circ I\ra \imath_\ast\circ\KZ\circ I.$$ Applying Lemma \ref{projiso} to the exact functors $\KZ'\circ\Res_b$
and $\imath_\ast\circ\KZ$ yields that there is an isomorphism of
functors $$\KZ'\circ\Res_b\cong\imath_\ast\circ\KZ.$$
\end{proof}

\subsection{}\label{ss:coriso}

We give some corollaries of Theorem \ref{iso}.
\begin{cor}\label{indiso}
There is an isomorphism of functors
\begin{equation*}
\KZ\circ\Ind_b\cong\coIndh\circ\KZ'.
\end{equation*}
\end{cor}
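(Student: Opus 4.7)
The plan is to deduce this corollary from Theorem \ref{iso} by passing to right adjoints. Recall the adjoint pairs $(\Res_b,\Ind_b)$ from Proposition \ref{Res}(1), $(\Resh,\coIndh)$ from Section \ref{ss:resHecke}, and the right adjoints $S$ of $\KZ$ and $S'$ of $\KZ'$ from Section \ref{ss:KZ}. The right adjoint of the composition $\KZ'\circ\Res_b$ is $\Ind_b\circ S'$, and the right adjoint of $\Resh\circ\KZ$ is $S\circ\coIndh$. Hence Theorem \ref{iso}, combined with the uniqueness of right adjoints up to canonical isomorphism, yields a natural isomorphism
$$\Ind_b\circ S'\cong S\circ\coIndh.$$

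I would then post-compose with $\KZ$ on the left and pre-compose with $\KZ'$ on the right, and evaluate at a projective object $N$ in $\calO_{c'}(W',\overline{\h})$. The counit isomorphism $\KZ\circ S\simeq\Id$ recalled in Section \ref{ss:KZ} simplifies the right-hand side to $\coIndh\circ\KZ'(N)$, while the unit isomorphism $N\simeq S'\circ\KZ'(N)$ from Proposition \ref{KZ}(2) simplifies the left-hand side to $\KZ\circ\Ind_b(N)$. This produces a natural isomorphism
$$\KZ\circ\Ind_b(N)\cong\coIndh\circ\KZ'(N)$$
for every projective $N$ in $\calO_{c'}(W',\overline{\h})$.

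Both functors $\KZ\circ\Ind_b$ and $\coIndh\circ\KZ'$ from $\calO_{c'}(W',\overline{\h})$ to $\Hecke_q(W)\modu$ are exact, hence right exact: this follows from Proposition \ref{Res}(1), the exactness of $\coIndh$ stated in Section \ref{ss:resHecke}, and the exactness of the $\KZ$ functors. Applying Lemma \ref{projiso} in the setting $(W',\overline{\h},c')$, the isomorphism on projectives lifts uniquely to an isomorphism $\KZ\circ\Ind_b\cong\coIndh\circ\KZ'$ on all of $\calO_{c'}(W',\overline{\h})$.

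The principal obstacle is essentially bookkeeping: one must ensure that the natural transformation produced by stringing together the adjunction isomorphisms coincides with the geometrically meaningful one, and that the $\Hecke_q(W)$-module structures match on each step of the chain. This is guaranteed by the naturality of units and counits of the adjunctions, so no geometric input beyond Theorem \ref{iso} is required.
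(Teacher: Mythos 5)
Your proposal is correct and follows essentially the same strategy as the paper: both dualize Theorem \ref{iso} through the adjunctions $(\Res_b,\Ind_b)$ and $(\Resh,\coIndh)$, verify the resulting morphism of functors is an isomorphism on projective objects using Proposition \ref{KZ}, and then lift to all of $\calO_{c'}(W',\overline{\h})$ via Lemma \ref{projiso} and the exactness of $\KZ\circ\Ind_b$ and $\coIndh\circ\KZ'$. The only cosmetic difference is that the paper builds the natural transformation $\chi$ explicitly through the projective generator $P_{\KZ}$ representing $\KZ$, while you reach the same morphism more abstractly by taking right adjoints of both sides of Theorem \ref{iso} and inserting the unit $\Id_{\calO'}\to S'\circ\KZ'$.
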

\begin{proof}
To simplify notation let us write $$\calO=\calO_c(W,\h),
\quad\calO'=\calO_{c'}(W',\overline{\h}), \quad\Hecke=\Hecke_q(W),
\quad\Hecke'=\Hecke_{q'}(W').$$ Recall that the functor $\KZ$ is
represented by a projective object $P_{\KZ}$ in $\calO$. So for any $N\in
\calO'$ we have a morphism of $\Hecke$-modules
\begin{eqnarray}
\KZ\circ\Ind_b(N)&\cong&\Hom_{\calO}(P_{\KZ},\Ind_b(N))\nonumber\\
&\cong&\Hom_{\calO'}(\Res_b(P_{\KZ}),N)\nonumber\\
\quad&\ra&\Hom_{\Hecke'}(\KZ'(\Res_b (P_{\KZ})), \KZ'(N)).\label{a}
\end{eqnarray}
By Theorem \ref{iso} we have
$$\KZ'\circ\Res_b(P_{\KZ})\cong
\sideset{^{\sss\Hecke}}{^W_{W'}}\Res\circ\KZ(P_{\KZ}).$$ Recall from Section \ref{ss:KZ} that the $\Hecke$-module
$\KZ(P_{\KZ})$ is isomorphic to $\Hecke$. So as $\Hecke'$-modules $\KZ'(\Res_b(P_{\KZ}))$ is also isomorphic to $\Hecke$. Therefore the morphism (\ref{a})
rewrites as
\begin{equation}\label{b}
\chi(N):\KZ\circ\Ind_b(N)\ra\Hom_{\Hecke'}(\Hecke,\KZ'(N)).
\end{equation}
It yields a morphism of functors
$$\chi: \KZ\circ\Ind_b\ra \coIndh\circ
\KZ'.$$ Note that if $N$ is a projective object in $\calO'$, then
$\chi(N)$ is an isomorphism by Proposition \ref{KZ}(1). So Lemma \ref{projiso} implies that $\chi$ is an isomorphism of
functors, because
both functors $\KZ\circ\Ind_b$ and $\coIndh\circ \KZ'$ are exact.
\end{proof}

\subsection{}

The following lemma will be useful to us.

\begin{lemme}\label{fullyfaithful}
Let $K$, $L$ be two right exact functors from $\calO_1$ to $\calO_2$,
where $\calO_1$ and $\calO_2$ can be either $\calO_c(W,\h)$ or
$\calO_{c'}(W',\overline{\h})$. Suppose that $K$, $L$ map projective objects to projective ones. Then the vector space homomorphism
\begin{equation}\label{y}
\Hom(K,L)\ra\Hom(\KZ_2\circ K, \KZ_2\circ L),\quad f\mapsto
1_{\KZ_2}f,
\end{equation}
is an isomorphism.
\end{lemme}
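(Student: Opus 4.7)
The plan is a two-step reduction: first use Lemma \ref{projiso} to restrict both sides to the full subcategories of projective objects, then apply Proposition \ref{KZ}(1) pointwise.

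Denote by $I_1$ the inclusion of $\Proj_{c_1}(W_1,\h_1)$ into $\calO_1$, where $c_1,W_1,\h_1$ are the data defining $\calO_1$. Since $\KZ_2$ is exact and $K$, $L$ are right exact, the four functors $K$, $L$, $\KZ_2\circ K$, $\KZ_2\circ L$ are all right exact. Lemma \ref{projiso} applied to each pair therefore provides isomorphisms
\begin{equation*}
r_{I_1}:\Hom(K,L)\simra\Hom(K\circ I_1,L\circ I_1),
\end{equation*}
\begin{equation*}
r_{I_1}:\Hom(\KZ_2\circ K,\KZ_2\circ L)\simra\Hom(\KZ_2\circ K\circ I_1,\KZ_2\circ L\circ I_1).
\end{equation*}
These fit into an evident commutative square whose top arrow is the map (\ref{y}) and whose bottom arrow is the analogous map $g\mapsto 1_{\KZ_2}g$. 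It thus suffices to prove that the bottom arrow is bijective.

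By the hypothesis that $K$ and $L$ preserve projectives, $K(P)$ and $L(P)$ both lie in $\Proj_{c_2}(W_2,\h_2)$ for every $P\in \Proj_{c_1}(W_1,\h_1)$. Proposition \ref{KZ}(1) applied in $\calO_2$ with the projective object $L(P)$ therefore yields a bijection
\begin{equation*}
\Hom_{\calO_2}(K(P),L(P))\simra\Hom_{\Hecke_{q_2}(W_2)}(\KZ_2 K(P),\KZ_2 L(P)),\quad h\mapsto\KZ_2(h),
\end{equation*}
for every such $P$, and this bijection is natural in $P$ since it is given by applying the functor $\KZ_2$. A morphism of functors $K\circ I_1\ra L\circ I_1$ is precisely a compatible family of such $h$'s, so assembling these pointwise bijections produces a bijection between morphisms of functors $K\circ I_1\ra L\circ I_1$ and morphisms of functors $\KZ_2\circ K\circ I_1\ra\KZ_2\circ L\circ I_1$; by construction this bijection is the bottom arrow $g\mapsto 1_{\KZ_2}g$.

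The main obstacle is essentially bookkeeping: checking that the two reductions, namely restriction to the projective subcategory via Lemma \ref{projiso} and the pointwise application of Proposition \ref{KZ}(1), are compatible. Both compatibilities are automatic from the naturality of $\KZ_2$ and of the restriction map $r_{I_1}$.
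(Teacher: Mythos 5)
Your proposal is correct and follows essentially the same route as the paper: both proofs first invoke Lemma \ref{projiso} to reduce to the restrictions of $K$, $L$ to the projective subcategory, and then use the full faithfulness of $\KZ_2$ on projectives (Proposition \ref{KZ}(1)) to conclude; the paper states this last step as the full faithfulness of the post-composition functor $\Fct(\Proj_1,\Proj_2)\ra\Fct(\Proj_1,\Hecke_2\modu)$, which is exactly what your pointwise-assembly argument unpacks.
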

Notice that if $K=L$, this is even an isomorphism of rings.

\begin{proof}
Let $\Proj_1$, $\Proj_2$ be respectively the subcategory of projective objects in $\calO_1$, $\calO_2$. Write $\tilde{K}$, $\tilde{L}$ for the functors from $\Proj_1$ to $\Proj_2$ given by the restrictions of $K$, $L$, respectively.
Let $\Hecke_2$ be the Hecke algebra corresponding to $\calO_2$.
Since the functor $\KZ_2$ is fully faithful over $\Proj_2$ by Proposition \ref{KZ}(1), the following
functor
$$\Fct(\Proj_1,\Proj_2)\ra\Fct(\Proj_1,
\Hecke_2\modu)\,,\quad G\mapsto \KZ_2\circ G$$ is also fully
faithful. This yields an isomorphism
$$\Hom(\tilde{K},\tilde{L})\simra\Hom(\KZ_2\circ\tilde{K},\KZ_2\circ\tilde{L}),\quad f\mapsto 1_{\KZ_2}f.$$
Next, by Lemma \ref{projiso} the canonical morphisms
$$\Hom(K,L)\ra\Hom(\tilde{K}, \tilde{L)},\quad\Hom(\KZ_2\circ
K,\KZ_2\circ L)\ra \Hom(\KZ_2\circ\tilde{K},\KZ_2\circ\tilde{L})$$
are isomorphisms. So the map (\ref{y}) is also an isomorphism.
\end{proof}

Let $b(W,W'')$ be a point in $\h$ whose stabilizer is $W''$. Let
$b(W',W'')$ be its image in $\overline{\h}=\h/\h^{W'}$ via the
canonical projection. Write $b(W,W')=b$.
\begin{cor}\label{corcom}
There are isomorphisms of functors
\begin{eqnarray*}
\Res_{b(W',W'')}\circ\Res_{b(W,W')}&\cong&\Res_{b(W,W'')},\\
\Ind_{b(W,W')}\circ\Ind_{b(W',W'')}&\cong&\Ind_{b(W,W'')}.
\end{eqnarray*}
\end{cor}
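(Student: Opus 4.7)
The idea is to push the statement through the $\KZ$ functor to a transitivity statement for Hecke restrictions (which is essentially topological), apply Lemma \ref{fullyfaithful} to pull the isomorphism back to the DAHA side, and then handle the induction half by adjunction.

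For the restriction isomorphism, I apply Theorem \ref{iso} three times to obtain natural isomorphisms
\begin{eqnarray*}
\KZ''\circ\Res_{b(W',W'')}\circ\Res_{b(W,W')}&\cong& \Resh^{W'}_{W''}\circ\Resh^{W}_{W'}\circ\KZ,\\
\KZ''\circ\Res_{b(W,W'')}&\cong& \Resh^{W}_{W''}\circ\KZ,
\end{eqnarray*}
where $\KZ''$ denotes the Knizhnik-Zamolodchikov functor associated to $(W'',\overline{\h}/\overline{\h}^{W''})$. The problem reduces to constructing a natural isomorphism $\Resh^{W'}_{W''}\circ\Resh^{W}_{W'}\cong\Resh^{W}_{W''}$. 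Since these are the pullback functors along the braid group inclusions $\imath^W_{W'}\circ\imath^{W'}_{W''}$ and $\imath^W_{W''}$ respectively, it suffices to show these two inclusions $B_{W''}\hookrightarrow B_W$ are conjugate by an element of the pure braid group $P_W=\pi_1(\h_{reg},x_0)$. This is a topological verification: if I choose the defining path for $\imath^W_{W''}$ to be the concatenation of the defining path of $\imath^W_{W'}$ with a path contained in $\Omega\cap\h'_{reg}$ whose image under $p:\h\to\overline{\h}$ is the defining path of $\imath^{W'}_{W''}$, then the equality $\imath^W_{W''}=\imath^W_{W'}\circ\imath^{W'}_{W''}$ follows directly from the construction (\ref{heckeres4}), and the path-independence statement at the end of Section \ref{ss:resHecke} absorbs the pure-braid ambiguity. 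To transfer back to the DAHA side, both $\Res_{b(W',W'')}\circ\Res_{b(W,W')}$ and $\Res_{b(W,W'')}$ are exact and preserve projectives by Proposition \ref{Res}(1), so Lemma \ref{fullyfaithful} lifts the Hecke-side isomorphism to the desired DAHA-side one.

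For the induction isomorphism, I invoke adjunction. By Proposition \ref{Res}(1) each $\Res_b$ is left adjoint to $\Ind_b$, so $\Res_{b(W',W'')}\circ\Res_{b(W,W')}$ is left adjoint to $\Ind_{b(W,W')}\circ\Ind_{b(W',W'')}$, while $\Res_{b(W,W'')}$ is left adjoint to $\Ind_{b(W,W'')}$. Since right adjoints are unique up to natural isomorphism, the restriction isomorphism immediately forces
$$\Ind_{b(W,W')}\circ\Ind_{b(W',W'')}\cong\Ind_{b(W,W'')}.$$

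The main obstacle is the Hecke-side transitivity. Conceptually it is transparent---concatenate paths---but one has to track three different quotient spaces ($\h_{reg}/W$, $\h'_{reg}/W'$, and $\overline{\h}_{reg}/W'$) and check that the topological composition produced there matches $\imath^W_{W''}$ only up to the conjugation indeterminacy already present in the construction of $\imath$. Once this bookkeeping is carried out, the rest of the argument is formal.
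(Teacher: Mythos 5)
Your proof takes essentially the same route as the paper: reduce to the Hecke side via Theorem \ref{iso}, establish transitivity of parabolic restriction for Hecke algebras, transfer back with Lemma \ref{fullyfaithful} (using that the restriction functors are exact and preserve projectives by Proposition \ref{Res}(1)), and obtain the induction statement by uniqueness of right adjoints. The one point where you diverge is in \emph{how} you propose to establish the Hecke-side isomorphism $\Resh^{W'}_{W''}\circ\Resh^{W}_{W'}\cong\Resh^{W}_{W''}$: the paper simply cites the explicit presentations of the parabolic Hecke subalgebras in \cite[Proposition 4.22]{BMR}, under which the transitivity of $\imath_q$ is visible on generators, whereas you outline a direct topological argument based on concatenating the defining paths in the construction of the braid-group embeddings \eqref{heckeres4}. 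Your route is plausible but, as you acknowledge, the bookkeeping is nontrivial: the two embeddings one wishes to compose are defined with respect to two different spaces ($\h$ and $\overline{\h}=\h/\h^{W'}$) and two different small balls $\Omega$, so one must carefully match up the fibration $\ell$, the "pushing-into-$\Omega$" step $\kappa$, and the inclusion $\jmath$ across the quotient $p:\h\to\overline{\h}$ before the pure-braid ambiguity absorbs the remaining discrepancy. The citation used in the paper bypasses this entirely, which is why it is the cleaner choice, but your approach would also work if carried out in full.
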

\begin{proof}
Since the restriction functors map projective objects to projective ones by Proposition \ref{Res}(1), Lemma
\ref{fullyfaithful} applied to the categories
$\calO_1=\calO_c(W,\h)$, $\calO_2=\calO_{c''}(W'',\h/\h^{W''})$ yields an
isomorphism
\begin{eqnarray*}
&&\Hom(\Res_{b(W',W'')}\circ\Res_{b(W,W')},\Res_{b(W,W'')})\\
&&\cong\Hom(\KZ''\circ\Res_{b(W',W'')}\circ\Res_{b(W,W')},\KZ''\circ\Res_{b(W,W'')}).
\end{eqnarray*}
By Theorem \ref{iso} the set on the second row is
\begin{equation}\label{d}
\Hom(\sideset{^{\sss\Hecke}}{^{W'}_{W''}}\Res\circ\Resh\circ\KZ,
\sideset{^{\sss\Hecke}}{^W_{W''}}\Res\circ\KZ). \end{equation}

By the presentations of Hecke algebras in \cite[Proposition
4.22]{BMR}, there is an isomorphism
$$\sigma:\sideset{^{\sss\Hecke}}{^{W'}_{W''}}\Res\circ\Resh\simra\sideset{^{\sss\Hecke}}{^{W}_{W''}}\Res.$$
Hence the element $\sigma 1_{\KZ}$ in the set (\ref{d}) maps to an
isomorphism
$$\Res_{b(W',W'')}\circ\Res_{b(W,W')}\cong\Res_{b(W,W'')}.$$
This proves the first isomorphism in the corollary. The second one
follows from the uniqueness of right adjoint functor.
\end{proof}

\subsection{Biadjointness of $\Res_b$ and $\Ind_b$.}\label{ss:biadjoint}

Recall that a finite dimensional $\C$-algebra $A$ is
symmetric if $A$ is isomorphic to $A^\ast=\Hom_{\C}(A,\C)$
as $(A,A)$-bimodules.

\begin{lemme}\label{heckeind}
  Assume that $\Hecke_{q}(W)$ and $\Hecke_{q'}(W')$ are symmetric
  algebras. Then the functors $\Indh$ and $\coIndh$ are isomorphic,
  i.e., the functor $\Indh$ is biadjoint to $\Resh$.
\end{lemme}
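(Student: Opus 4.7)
The plan is to construct a natural $(\Hecke_q(W), \Hecke_{q'}(W'))$-bimodule isomorphism
\[
  \phi: \Hecke_q(W) \simra \Hom_{\Hecke_{q'}(W')}(\Hecke_q(W), \Hecke_{q'}(W'))
\]
and then combine it with the standard tensor-hom identification. Recall from Section \ref{ss:resHecke} that $\Indh$ and $\coIndh$ are exact, which forces $\Hecke_q(W)$ to be finitely generated projective as a left $\Hecke_{q'}(W')$-module. Consequently, for any $M\in\Hecke_{q'}(W')\modu$, the canonical natural map
\[
  \Hom_{\Hecke_{q'}(W')}\bigl(\Hecke_q(W),\Hecke_{q'}(W')\bigr)\otimes_{\Hecke_{q'}(W')}M \simra \Hom_{\Hecke_{q'}(W')}\bigl(\Hecke_q(W),M\bigr)
\]
is an isomorphism. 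Tensoring $\phi$ with $M$ over $\Hecke_{q'}(W')$ then yields the desired natural isomorphism $\Indh(M)\simra\coIndh(M)$; and since $\coIndh$ is already right adjoint to $\Resh$ (Section \ref{ss:resHecke}), this isomorphism of functors is precisely the biadjointness statement.

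To produce $\phi$, I will exploit both symmetric algebra structures. Let $t:\Hecke_q(W)\ra\C$ and $t':\Hecke_{q'}(W')\ra\C$ denote the symmetrizing traces. Nondegeneracy of the bilinear form $(x,y)\mapsto t'(xy)$ on $\Hecke_{q'}(W')$ yields a unique $\C$-linear map $\pi:\Hecke_q(W)\ra\Hecke_{q'}(W')$ characterized by
\[
  t'\bigl(a\,\pi(b)\bigr) = t(ab), \qquad \forall\ a\in\Hecke_{q'}(W'),\ b\in\Hecke_q(W).
\]
The cyclic invariance of $t$ and $t'$ forces $\pi$ to be a $(\Hecke_{q'}(W'),\Hecke_{q'}(W'))$-bimodule homomorphism -- this is the \emph{relative trace}. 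Define
\[
  \phi(b)(b') \,=\, \pi(b'b).
\]
Direct inspection shows $\phi$ is a $(\Hecke_q(W),\Hecke_{q'}(W'))$-bimodule map. Setting $a=1$ in the defining relation of $\pi$, one sees that $\phi(b)=0$ forces $t(b'b)=0$ for all $b'$, hence $b=0$ by nondegeneracy of $t$; so $\phi$ is injective. A dimension count, valid since $\Hecke_q(W)$ is finitely generated projective over $\Hecke_{q'}(W')$, then gives surjectivity.

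The main delicate point I expect is the verification that $\pi$ is genuinely a $(\Hecke_{q'}(W'),\Hecke_{q'}(W'))$-bimodule map, because this is what transports the symmetric structure of $\Hecke_q(W)$ across the inclusion $\imath_q$. Once this is granted, the remaining identifications are essentially formal. The whole argument is the classical proof that an inclusion of finite-dimensional symmetric $\C$-algebras, whose target is one-sided projective over its source, is a symmetric Frobenius extension, and hence that induction and coinduction coincide.
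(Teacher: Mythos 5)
Your argument is correct, and it is a genuinely different route from the one in the paper. The paper's proof is a short abstract chain of bimodule isomorphisms: it starts from the tensor-hom identification $\Hom_{\Hecke'}(\Hecke,\Hecke')\otimes_{\Hecke'}M\simra\Hom_{\Hecke'}(\Hecke,M)$, then computes $\Hom_{\Hecke'}(\Hecke,\Hecke')\cong\Hom_{\Hecke'}(\Hecke,(\Hecke')^\ast)\cong\Hom_\C(\Hecke'\otimes_{\Hecke'}\Hecke,\C)\cong\Hecke^\ast\cong\Hecke$ as $(\Hecke,\Hecke')$-bimodules, using the symmetric structures of $\Hecke'$ and $\Hecke$ in the first and last links respectively. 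You instead construct the isomorphism $\Hecke\simra\Hom_{\Hecke'}(\Hecke,\Hecke')$ by hand, via the relative trace map $\pi$ defined by $t'(a\,\pi(b))=t(ab)$, and you verify directly that $\pi$ is a $\Hecke'$-bimodule map (this does follow correctly from the cyclicity of $t$ and $t'$ and nondegeneracy of $t'$) and that $\phi(b)(b')=\pi(b'b)$ is an injective $(\Hecke,\Hecke')$-bimodule map, surjective by dimension count. Both proofs of course encode the same classical fact that an inclusion of symmetric algebras with the target projective over the source is a symmetric Frobenius extension; the paper's version is more economical, while yours produces an explicit Frobenius system $(\pi, \text{dual bases})$, which is potentially useful for later computations. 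Two small remarks: the paper takes as input that $\Hecke$ is \emph{free} over $\Hecke'$, whereas you only invoke projectivity deduced from the stated exactness of $\Indh$ and $\coIndh$; this is a valid and slightly weaker hypothesis. And for the dimension count it is worth noting that $\dim_\C\Hom_{\Hecke'}(\Hecke,\Hecke')=\dim_\C\Hecke$ does itself use the symmetry of $\Hecke'$ (via $\Hom_{\Hecke'}(\Hecke,(\Hecke')^\ast)\cong\Hom_\C(\Hecke,\C)$), so in effect you reuse the first two links of the paper's chain at that step.
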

\begin{proof}
  We abbreviate $\Hecke=\Hecke_{q}(W)$ and $\Hecke'=\Hecke_{q'}(W')$. Since $\Hecke$ is free as a left $\Hecke'$-module,
  for any $\Hecke'$-module $M$ the map
  \begin{equation}\label{eq:proj}
  \Hom_{\Hecke'}(\Hecke,\Hecke')\otimes_{\Hecke'}M\ra
  \Hom_{\Hecke'}(\Hecke,M)
  \end{equation} given by multiplication is an
  isomorphism of $\Hecke$-modules. By assumption $\Hecke'$ is isomorphic to $(\Hecke')^\ast$ as $(\Hecke',\Hecke')$-bimodules.
  Thus we have the following $(\Hecke,\Hecke')$-bimodule isomorphisms
\begin{eqnarray*}
\Hom_{\Hecke'}(\Hecke,\Hecke')&\cong&\Hom_{\Hecke'}(\Hecke,(\Hecke')^\ast)\\
&\cong&\Hom_{\C}(\Hecke'\otimes_{\Hecke'}\Hecke,\C)\\
&\cong&\Hecke^\ast\\
&\cong&\Hecke.
\end{eqnarray*}
The last isomorphism follows from the fact the $\Hecke$ is
symmetric. Thus, by (\ref{eq:proj}) the functors $\Indh$ and $\coIndh$ are isomorphic.
\end{proof}

\begin{rmq}\label{rmq:symmetric}
It is proved that $\Hecke_{q}(W)$ is a symmetric algebra for all
irreducible complex reflection group $W$ except for some of the 34
exceptional groups in the Shephard-Todd classification. See
\cite[Section 2A]{BMM} for details.
\end{rmq}

The biadjointness of $\Res_b$ and $\Ind_b$ was conjectured in
\cite[Remark 3.18]{BE} and was announced by I. Gordon and M. Martino. We give a proof in Proposition \ref{leftadjunction} since it seems not yet to be available in the literature. Let us first consider the following lemma.

\begin{lemme}\label{lem:adjunction}
  (1) Let $A$, $B$ be noetherian algebras and $T$ be a functor $$T:A\modu\ra B\modu.$$ If $T$ is right exact and commutes with direct sums, then it has a right adjoint.

  (2) The functor $$\Res_b:\calO_{c}(W,\h)\ra\calO_{c'}(W',\overline{\h})$$
  has a left adjoint.
\end{lemme}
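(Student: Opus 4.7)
For part (1), the plan is to invoke a version of the Eilenberg--Watts theorem. Set $M := T(A)$ equipped with its natural $(B,A)$-bimodule structure: the left $B$-action coming from $T(A) \in B\modu$ and the right $A$-action from $T$ applied to the identification $A \cong \End_A({}_AA)^{\op}$. Since $T$ is right exact and commutes with direct sums, applying $T$ to any finite presentation $A^m \to A^n \to N \to 0$ of $N \in A\modu$ and comparing with the result of applying $M \otimes_A -$ to the same presentation yields a natural isomorphism $T \cong M \otimes_A -$. The right adjoint of $T$ is then $\Hom_B(M, -)$; the slightly delicate point is to verify that $\Hom_B(M, N)$ lies in $A\modu$ rather than only in $A\Modu$, which should follow from the noetherianity of $B$ together with the finite generation of $M$ over $B$.

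For part (2), my strategy is to reduce to the situation of (1) via projective generators and then use linear duality to convert the search for a left adjoint of $\Res_b$ into that of a right adjoint of a dualized functor. By Section \ref{ss:catO}, each of $\calO_c(W,\h)$ and $\calO_{c'}(W',\overline{\h})$ admits a projective generator; writing $P$, $P'$ for these and setting $A = \End_{\calO_c(W,\h)}(P)^{\op}$, $A' = \End_{\calO_{c'}(W',\overline{\h})}(P')^{\op}$, we obtain finite-dimensional $\C$-algebras and equivalences $\calO_c(W,\h) \simeq A\modu$, $\calO_{c'}(W',\overline{\h}) \simeq A'\modu$. Under these equivalences $\Res_b$ corresponds to an exact functor $T\colon A\modu \to A'\modu$.

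Because $A$ and $A'$ are finite dimensional, linear duality $D := \Hom_\C(-,\C)$ provides a contravariant equivalence $A\modu \simeq A^{\op}\modu$, and similarly for $A'$. Conjugating $T$ through $D$ produces a covariant exact functor $T^\vee\colon A^{\op}\modu \to (A')^{\op}\modu$ characterized by $T^\vee \circ D \cong D \circ T$; being exact and additive between module categories over finite-dimensional algebras, it is right exact and commutes with the (automatically finite) direct sums, so by (1) it has a right adjoint $G^\vee\colon (A')^{\op}\modu \to A^{\op}\modu$. Transporting $G^\vee$ back through $D$ yields a covariant functor $L\colon A'\modu \to A\modu$ with $D \circ L \cong G^\vee \circ D$, and the chain of natural isomorphisms
\begin{align*}
\Hom_{A'}(N, T M) &\cong \Hom_{(A')^{\op}}(D(T M), D N) \cong \Hom_{(A')^{\op}}(T^\vee(D M), D N) \\
&\cong \Hom_{A^{\op}}(D M, G^\vee(D N)) \cong \Hom_{A^{\op}}(D M, D(L N)) \cong \Hom_A(L N, M)
\end{align*}
exhibits $L$ as a left adjoint of $T$. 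Pulling back along the projective-generator equivalences then produces the desired left adjoint of $\Res_b$.

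The main obstacle is really part (1): the Eilenberg--Watts step is standard for cocomplete module categories but requires some care in the finitely generated setting to ensure that the right adjoint preserves finite generation. For the application in (2) this subtlety is harmless, since both $A$ and $A^{\op}$ are finite dimensional over $\C$ and every module in sight is a finite dimensional $\C$-vector space, so the $A$-action on $\Hom_B(M,N)$ automatically yields a finite-dimensional (hence finitely generated) $A$-module.
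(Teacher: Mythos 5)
Your proof of part (1) is essentially identical to the paper's: both build the bimodule $M = T(A)$, use right exactness and a finite presentation $A^{\oplus n}\to A^{\oplus m}\to N\to 0$ (available by noetherianity) to conclude $T\cong M\otimes_A -$, and then take $\Hom_B(M,-)$ as the right adjoint.

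For part (2) you use the same overall strategy as the paper --- conjugate $\Res_b$ through a contravariant duality, apply part (1) to the conjugate to get a right adjoint, and conjugate back to obtain a left adjoint --- but you instantiate it with a different duality. The paper conjugates $\Res_b$ by the naive duality $(-)\spcheck:\calO_c(W,\h)\to\calO_{c^\dag}(W,\h^\ast)$ of \cite[Section 4.2]{GGOR} to obtain $\Res_b\spcheck=(-)\spcheck\circ\Res_b\circ(-)\spcheck$, then passes to the Morita-equivalent finite-dimensional algebra to apply part (1). You instead pass to finite-dimensional algebras $A$, $A'$ first and conjugate by the $\C$-linear duality $D=\Hom_\C(-,\C):A\modu\to A^{\op}\modu$. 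Both routes are correct; yours is somewhat more self-contained, as it only uses Morita theory and linear duality of finite-dimensional algebras, while the paper leans on the existence of the GGOR duality $(-)\spcheck$ and its target category $\calO_{c^\dag}(W,\h^\ast)$. The chain of adjunction isomorphisms you write is valid and exhibits $L$ as a left adjoint. Your closing remark about why $\Hom_B(M,N)$ is finitely generated in the application (everything finite-dimensional over $\C$) is the correct way to handle the only delicate point, and matches what the paper implicitly relies on.
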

\begin{proof}
(1) Consider the $(B,A)$-bimodule $M=T(A)$. We claim that the functor $T$ is isomorphic to the functor $M\otimes_A-$. Indeed, by definition we have $T(A)\cong M\otimes_AA$ as $B$ modules. Now, for any $N\in A\modu$, since $N$ is finitely generated and $A$ is noetherian there exists $m$, $n\in\N$ and an exact sequence
$$A^{\oplus n}\lra A^{\oplus m}\lra N\lra 0.$$
Since both $T$ and $M\otimes_A-$ are right exact and they commute with direct sums, the fact that $T(A)\cong M\otimes_AA$ implies that $T(N)\cong M\otimes_AN$ as $B$-modules. This proved the claim. Now, the functor $M\otimes_A-$ has a right adjoint $\Hom_B(M,-)$, so $T$ also has a right adjoint.

(2) Recall that for any complex reflection group $W$, a contravariant duality functor
$$(-)\spcheck:\calO_{c}(W,\h)\ra\calO_{c^\dag}(W,\h^\ast)$$
was defined in \cite[Section 4.2]{GGOR}, here $c^\dag:\calS\ra\C$ is another parameter explicitly determined by $c$. Consider the functor $$\Res_b\spcheck=(-)\spcheck\circ\Res_b\circ(-)\spcheck: \calO_{c^\dag}(W,\h^\ast)\ra \calO_{{c'}^\dag}(W',(\overline{\h})^\ast).$$
The category $\calO_{c^\dag}(W,\h^\ast)$ has a projective generator $P$. The algebra $\End_{\calO_{c^\dag}(W,\h^\ast)}(P)^{\op}$ is finite dimensional over $\C$ and by Morita theory we have an equivalence of categories $$\calO_{c^\dag}(W,\h^\ast)\cong
\End_{\calO_{c^\dag}(W,\h^\ast)}(P)^{\op}\modu.$$ Since the functor $\Res_b\spcheck$ is exact and obviously commutes with direct sums, by part (1) it has a right adjoint $\Psi$. Then it follows that $(-)\spcheck\circ\Psi\circ(-)\spcheck$ is left adjoint to $\Res_b$. The lemma is proved.
\end{proof}
\begin{prop}\label{leftadjunction}
Under the assumption of Lemma \ref{heckeind}, the functor
$\Ind_b$ is left adjoint to $\Res_b$.
\end{prop}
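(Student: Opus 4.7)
The plan is to invoke Lemma~\ref{lem:adjunction}(2) to obtain a left adjoint $F\colon\calO_{c'}(W',\overline{\h})\to\calO_c(W,\h)$ of $\Res_b$, and then to establish a natural isomorphism $F\cong\Ind_b$. Since $\Res_b$ is exact, $F$ is right exact and sends projective objects to projective objects, while $\Ind_b$ is exact by Proposition~\ref{Res}(1). By Lemma~\ref{projiso} it suffices to produce a natural isomorphism $F(P)\cong\Ind_b(P)$ for $P$ ranging over projective objects of $\calO_{c'}(W',\overline{\h})$; this conveniently avoids the question of whether $\Ind_b$ itself preserves projectives.

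For such a projective $P$ the first step is to identify $\KZ(F(P))$ with $\Indh(\KZ'(P))$ as $\Hecke_q(W)$-modules. Since $F(P)$ is projective, Proposition~\ref{KZ}(1) together with the adjunction $F\dashv\Res_b$ gives, for every projective $Q\in\calO_c(W,\h)$,
\[
 \Hom_{\Hecke_q(W)}(\KZ(F(P)),\KZ(Q))\cong\Hom_{\calO_c}(F(P),Q)\cong\Hom_{\calO_{c'}}(P,\Res_b(Q)).
\]
Applying Proposition~\ref{KZ}(1) to the projective $\Res_b(Q)$, then Theorem~\ref{iso} and the adjunction $\Indh\dashv\Resh$ rewrites the right hand side as $\Hom_{\Hecke_q(W)}(\Indh(\KZ'(P)),\KZ(Q))$. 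Taking $Q=P_{\KZ}$ so that $\KZ(Q)\cong\Hecke_q(W)$, and using that the symmetric algebra $\Hecke_q(W)$ makes $\Hom_{\Hecke_q(W)}(-,\Hecke_q(W))$ a duality of finite dimensional left modules, we conclude $\KZ(F(P))\cong\Indh(\KZ'(P))$.

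The second step compares $\Hom_{\calO_c}(X,F(P))$ and $\Hom_{\calO_c}(X,\Ind_b(P))$ for arbitrary $X\in\calO_c(W,\h)$. Since $F(P)$ is projective, Proposition~\ref{KZ}(1) combined with the previous paragraph identifies the first with $\Hom_{\Hecke_q(W)}(\KZ(X),\Indh(\KZ'(P)))$. For the second, the adjunction $\Res_b\dashv\Ind_b$, Proposition~\ref{KZ}(1) applied to the projective $P$, Theorem~\ref{iso} and the adjunction $\Resh\dashv\coIndh$ yield $\Hom_{\Hecke_q(W)}(\KZ(X),\coIndh(\KZ'(P)))$, which equals $\Hom_{\Hecke_q(W)}(\KZ(X),\Indh(\KZ'(P)))$ by Lemma~\ref{heckeind}. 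Naturality in $X$ and the Yoneda lemma then give $F(P)\cong\Ind_b(P)$, natural in $P$, and Lemma~\ref{projiso} promotes this to the desired isomorphism of functors $F\cong\Ind_b$.

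The main obstacle is the identification $\KZ(F(P))\cong\Indh(\KZ'(P))$: the functor $F$ is defined only by its universal property, so the only handle on $\KZ\circ F$ comes from testing against $\KZ(Q)$ for $Q$ projective, and the symmetry assumption on the Hecke algebras (Lemma~\ref{heckeind}) is indispensable to convert this partial information into a genuine Hecke-module isomorphism. Without it, the left and right adjoints of $\Resh$ would disagree and the two chains of isomorphisms computing $\Hom(X,F(P))$ and $\Hom(X,\Ind_b(P))$ could not be merged.
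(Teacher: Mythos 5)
Your proposal is correct, and while it begins the same way as the paper's proof---obtaining the left adjoint $F$ of $\Res_b$ from Lemma~\ref{lem:adjunction}(2) and then using the symmetric-algebra duality to convert an isomorphism of $\Hom(-,\Hecke_q(W))$'s into a genuine module isomorphism $\KZ(F(P))\cong\Indh(\KZ'(P))$ on projectives---it diverges significantly in the second half. The paper's Step~2 proves that $F$ is also \emph{right} adjoint to $E=\Res_b$, by transporting the unit and counit of the adjunction $E^{\sss\Hecke}\dashv F^{\sss\Hecke}$ through $\KZ$ and then verifying both triangle identities via a large commutative diagram together with Lemma~\ref{fullyfaithful}, finally invoking uniqueness of right adjoints to conclude $F\cong\Ind_b$. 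You instead compare $\Hom_{\calO}(X,F(P))$ and $\Hom_{\calO}(X,\Ind_b(P))$ for arbitrary $X$ through two chains of natural isomorphisms landing in $\Hom_{\Hecke}(\KZ(X),\Indh(\KZ'(P)))$, and close with Yoneda plus Lemma~\ref{projiso}. This avoids the triangle-identity bookkeeping entirely, which is the bulk of the paper's Step~2, and is noticeably cleaner; it works precisely because $\Ind_b$ is already known to be the right adjoint of $\Res_b$, so its Hom-sets can be computed directly via Theorem~\ref{iso} and Lemma~\ref{heckeind}, whereas the paper chooses to recover the adjunction itself. Two points you skim over that a careful write-up should flag explicitly: in Step~1 the chain of isomorphisms with $Q=P_{\KZ}$ must be observed to be one of \emph{right} $\Hecke$-modules (via the $\End_{\calO}(P_{\KZ})^{\op}\cong\Hecke$-action), since that is what licenses the symmetric-algebra duality---the paper spells this out; and in Step~2 you assert rather than verify that $F(P)\cong\Ind_b(P)$ is natural in $P$ over $\Proj_{c'}(W',\overline{\h})$, which does hold because every link in both chains is natural in $P$, and is exactly what Lemma~\ref{projiso} requires as input.
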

\begin{proof}
\emph{Step 1.} We abbreviate $\calO=\calO_c(W,\h)$, $\calO'=\calO_{c'}(W',\overline{\h})$, $\Hecke=\Hecke_q(W)$, $\Hecke'=\Hecke_{q'}(W')$, and write $\Id_{\calO}$, $\Id_{\calO'}$,
$\Id_{\Hecke}$, $\Id_{\Hecke'}$ for the identity functor on the corresponding categories. We also abbreviate $E^{\sss\Hecke}=\Resh$,
$F^{\sss\Hecke}=\Indh$ and $E=\Res_b$. By Lemma \ref{lem:adjunction} the functor $E$ has a left adjoint. We denote it by
$F:\calO'\ra\calO$. Recall the functors $$\KZ:\calO\ra\Hecke\modu,\quad \KZ':\calO'\ra\Hecke'\modu.$$
The goal of this step is to show that there exists an isomorphism of functors
\begin{equation*}
  \KZ\circ F\cong F^{\sss\Hecke}\circ\KZ'.
\end{equation*}
To this end, let $S$, $S'$ be respectively the right adjoints of $\KZ$, $\KZ'$, see Section \ref{ss:KZ}. We will first give an isomorphism of functors $$F^{\sss\Hecke}\cong\KZ\circ F\circ S'.$$ Let $M\in\Hecke'\modu$ and $N\in\Hecke\modu$. Consider the following equalities given by adjunctions
\begin{eqnarray*}
  \Hom_{\Hecke}(\KZ\circ F\circ S'(M), N)&=&\Hom_{\calO}(F\circ S'(M), S(N))\\
  &=&\Hom_{\calO'}(S'(M), E\circ S(N)).
\end{eqnarray*}
The functor $\KZ'$ yields a map
\begin{equation}\label{eq:mapKZ'}
  a(M,N): \Hom_{\calO'}(S'(M), E\circ S(N))\ra\Hom_{\Hecke'}(\KZ'\circ S'(M), \KZ'\circ E\circ S(N)).
\end{equation}
Since the canonical adjunction maps $\KZ'\circ S'\ra \Id_{\Hecke'}$, $\KZ\circ S\ra \Id_{\Hecke}$ are isomorphisms (see Section \ref{ss:KZ}) and since we have an isomorphism of functors $\KZ'\circ E\cong E^{\sss\Hecke}\circ \KZ$ by Theorem \ref{iso}, we get the following equalities
\begin{eqnarray*}
  \Hom_{\Hecke'}(\KZ'\circ S'(M), \KZ'\circ E\circ S(N))&=&\Hom_{\Hecke'}(M, E^{\sss\Hecke}\circ \KZ\circ S(N))\\
  &=&\Hom_{\Hecke'}(M, E^{\sss\Hecke}(N))\\
  &=&\Hom_{\Hecke}(F^{\sss\Hecke}(M), N).
\end{eqnarray*}
In the last equality we used that $F^{\sss\Hecke}$ is left adjoint to $E^{\sss\Hecke}$. So the map (\ref{eq:mapKZ'}) can be rewritten into the following form
\begin{equation*}
  a(M,N): \Hom_{\Hecke}(\KZ\circ F\circ S'(M), N)\ra\Hom_{\Hecke}(F^{\sss\Hecke}(M), N).
\end{equation*}
Now, take $N=\Hecke$. Recall that $\Hecke$ is isomorphic to $\KZ(P_{\KZ})$ as $\Hecke$-modules. Since $P_{\KZ}$ is projective, by Proposition \ref{KZ}(2) we have a canonical isomorphism in $\calO$ $$P_{\KZ}\cong S(\KZ(P_{\KZ}))=S(\Hecke).$$ Further $E$ maps projectives to projectives by Proposition \ref{Res}(1), so $E\circ S(\Hecke)$ is also projective. Hence Proposition \ref{KZ}(1) implies that in this case (\ref{eq:mapKZ'}) is an isomorphism for any $M$, i.e., we get an isomorphism
\begin{equation*}
 a(M,\Hecke): \Hom_{\Hecke}(\KZ\circ F\circ S'(M), \Hecke)\simra\Hom_{\Hecke}(F^{\sss\Hecke}(M), \Hecke).
\end{equation*}
Further this is an isomorphism of right $\Hecke$-modules with respect to the $\Hecke$-actions induced by the right action of $\Hecke$ on itself. Now, the fact that $\Hecke$ is a symmetric algebra yields that for any finite dimensional $\Hecke$-module $N$ we have isomorphisms of right $\Hecke$-modules
\begin{eqnarray*}
  \Hom_{\Hecke}(N,\Hecke)&\cong&\Hom_{\Hecke}(N,\Hom_{\C}(\Hecke,\C))\\
  &\cong&\Hom_\C(N,\C).
\end{eqnarray*}
Therefore $a(M,\Hecke)$ yields an isomorphism of right $\Hecke$-modules
$$\Hom_{\C}(\KZ\circ F\circ S'(M), \C)\ra\Hom_{\C}(F^{\sss\Hecke}(M), \C).$$
We deduce a natural isomorphism of left $\Hecke$-modules
$$\KZ\circ F\circ S'(M)\cong F^{\sss\Hecke}(M)$$
for any $\Hecke'$-module $M$. This gives an isomorphism of functors $$\psi:\KZ\circ F\circ S'\simra F^{\sss\Hecke}.$$
Finally, consider the canonical adjunction map $\eta:\Id_{\calO'}\ra S'\circ\KZ'$. We have a morphism of functors
$$\phi=(1_{\KZ\circ F}\eta)\circ(\psi 1_{\KZ'}):\KZ\circ F\ra F^{\sss\Hecke}\circ\KZ'.$$
Note that $\psi 1_{\KZ'}$ is an isomorphism of functors. If $Q$ is a projective object in $\calO'$, then by Proposition \ref{KZ}(2) the morphism $\eta(Q): Q\ra S'\circ\KZ'(Q)$ is also an isomorphism, so $\phi(Q)$ is an isomorphism. This implies that $\phi$ is an isomorphism of functors by Lemma \ref{projiso}, because both $\KZ\circ F$ and $F^{\sss\Hecke}\circ\KZ'$ are right exact functors. Here the right exactness of $F$ follows from that it is left adjoint to $E$. So we get the desired isomorphism of functors
$$\KZ\circ F\cong F^{\sss\Hecke}\circ\KZ'.$$

\emph{Step 2.} Let us now prove that $F$ is right adjoint to $E$. By uniqueness of adjoint functors, this will imply that $F$ is isomorphic to $\Ind_b$. First, by Lemma \ref{heckeind} the functor $F^{\sss\Hecke}$ is isomorphic to
$\coIndh$. So $F^{\sss\Hecke}$ is right
adjoint to $E^{\sss\Hecke}$, i.e., we have morphisms of functors
$$\varepsilon^{\sss\Hecke}: E^{\sss\Hecke}\circ F^{\sss\Hecke}\ra\Id_{\Hecke'},\quad
\eta^{\sss\Hecke}: \Id_{\Hecke}\ra F^{\sss\Hecke}\circ E^{\sss\Hecke}$$ such that
$$(\varepsilon^{\sss\Hecke}
1_{E^{\sss\Hecke}})\circ(1_{E^{\sss\Hecke}}\eta^{\sss\Hecke})=1_{E^{\sss\Hecke}},\quad
(1_{F^{\sss\Hecke}}\varepsilon^{\sss\Hecke}
)\circ(\eta^{\sss\Hecke}1_{F^{\sss\Hecke}})=1_{F^{\sss\Hecke}}.$$ Next, both $F$ and $E$ have exact right adjoints, given respectively by $E$ and $\Ind_b$. Therefore $F$ and $E$ map projective objects to projective ones. Applying
Lemma \ref{fullyfaithful} to
$\calO_1=\calO_2=\calO'$, $K=E\circ F$, $L=\Id_{\calO'}$ yields that the following map is bijective
\begin{equation}\label{eq:isoFE}
\Hom(E\circ F,\Id_{\calO'})\ra\Hom(\KZ'\circ E\circ
F,\KZ'\circ\Id_{\calO}),\quad f\mapsto 1_{\KZ'}f. \end{equation} By Theorem \ref{iso} and Step $1$ there
exist isomorphisms of functors $$\phi_E:
E^{\sss\Hecke}\circ\KZ\simra\KZ'\circ E,\quad \phi_F:
F^{\sss\Hecke}\circ\KZ'\simra\KZ\circ F.$$ Let
\begin{eqnarray*}
\phi_{EF}=(\phi_E 1_F)\circ(1_{E^{\sss\Hecke}}\phi_F):
E^{\sss\Hecke}\circ F^{\sss\Hecke}\circ\KZ'\simra\KZ'\circ E\circ F,\\
\phi_{FE}=(\phi_F 1_E)\circ(1_{F^{\sss\Hecke}}\phi_E):F^{\sss\Hecke}\circ
E^{\sss\Hecke}\circ\KZ\simra\KZ\circ F\circ E.
\end{eqnarray*} Identify
$$\KZ\circ\Id_{\calO}=\Id_{\Hecke}\circ\KZ,\quad
\KZ'\circ\Id_{\calO'}=\Id_{\Hecke'}\circ\KZ'.$$ We have a bijective
map
$$\Hom(\KZ'\circ E\circ F,\KZ'\circ\Id_{\calO'})\simra \Hom(E^{\sss\Hecke}\circ
F^{\sss\Hecke}\circ\KZ',\Id_{\Hecke'}\circ\KZ'),\quad g\mapsto
g\circ\phi_{EF}.$$ Together with (\ref{eq:isoFE}), it implies that
there exists a unique morphism $\vep: E\circ F\ra\Id_{\calO'}$ such that
$$(1_{\KZ'}\vep)\circ\phi_{EF}=\vep^{\sss\Hecke}1_{\KZ'}.$$ Similarly,
there exists a unique morphism $\eta: \Id_{\calO}\ra F\circ E$ such that
$$(\phi_{FE})^{-1}\circ(1_{\KZ}\eta)=\eta^{\sss\Hecke}1_{\KZ}.$$
Now, we have the following commutative diagram
$$\xymatrix{E^{\sss\Hecke}\circ\KZ\ar@{=}[r]\ar[d]_{1_{E^{\sss\Hecke}}
\eta^{\sss\Hecke}1_{\KZ}}&E^{\sss\Hecke}\circ\KZ\ar[r]^{\phi_E}
\ar[d]^{1_{E^{\sss\Hecke}}1_{\KZ}\eta}
&\KZ'\circ E\ar[d]^{1_{\KZ'}1_E\eta}\\
E^{\sss\Hecke}\circ F^{\sss\Hecke}\circ
E^{\sss\Hecke}\circ\KZ\quad\ar[r]^{\,1_{E^{\sss\Hecke}}\phi_{FE}}\ar@{=}[d]
&\quad E^{\sss\Hecke}\circ\KZ\circ F\circ E\quad\ar[r]^{\,\phi_E1_F1_E}
&\quad\KZ'\circ
E\circ F\circ E\ar@{=}[d]\\
E^{\sss\Hecke}\circ F^{\sss\Hecke}\circ
E^{\sss\Hecke}\circ\KZ\quad\ar[r]^{\,1_{E^{\sss\Hecke}}1_{F^{\sss\Hecke}}\phi_E}
\ar[d]_{\vep^{\sss\Hecke}1_{E^{\sss\Hecke}}1_{\KZ}}
&\quad E^{\sss\Hecke}\circ F^{\sss\Hecke}\circ\KZ'\circ
E\quad\ar[u]_{1_{E^{\sss\Hecke}}\phi_F1_E}
\ar[r]^{\,\phi_{EF}1_E}\ar[d]^{\vep^{\sss\Hecke}1_{\KZ'}1_E}
&\quad\KZ'\circ E\circ F\circ E\ar[d]^{1_{\KZ'}\vep 1_E}\\
E^{\sss\Hecke}\circ\KZ\ar[r]^{\phi_E} &\KZ'\circ E\ar@{=}[r]&\KZ'\circ
E.}$$ It yields that
$$(1_{\KZ'}\vep 1_E)\circ(1_{\KZ'}1_E\eta)=
\phi_E\circ(\vep^{\sss\Hecke}1_{E^{\sss\Hecke}}1_{\KZ})\circ(1_{E^{\sss\Hecke}}
\eta^{\sss\Hecke}1_{\KZ})\circ(\phi_E)^{-1}.$$
We deduce that
\begin{eqnarray}
1_{\KZ'}((\vep 1_E)\circ(1_E\eta))&=&\phi_E\circ(1_{E^{\sss\Hecke}}1_{\KZ})
\circ(\phi_E)^{-1}\nonumber\\
&=&1_{\KZ'}1_E.\label{eq:unit}
\end{eqnarray}
By applying Lemma \ref{fullyfaithful} to
$\calO_1=\calO$, $\calO_2=\calO'$,
$K=L=E$, we deduce that the following map is bijective
$$\End(E)\ra\End(\KZ'\circ E),\quad f\mapsto1_{\KZ'}f.$$
Hence (\ref{eq:unit}) implies that $$(\vep 1_E)\circ(1_E\eta)=1_E.$$
Similarly, we have $(1_F\varepsilon)\circ (\eta 1_F)=1_F$. So $E$ is left adjoint to $F$. By uniqueness of adjoint functors this implies that $F$ is isomorphic to $\Ind_b$. Therefore $\Ind_b$ is biadjoint to $\Res_b$.
\end{proof}

\section{Reminders on the Cyclotomic case.}\label{s:cyclotomiccase}

From now on we will concentrate on the cyclotomic rational DAHA's.
We fix some notation in this section.

\subsection{}\label{ss:cyclot1}

Let $l,n$ be positive integers. Write $\varepsilon=\exp(\frac{2\pi
\sqrt{-1}}{l})$. Let $\h=\C^n$, write $\{y_1,\ldots,y_n\}$ for its
standard basis. For $1\leqs i,j,k\leqs n$ with $i,j,k$ distinct, let
$\varepsilon_k$, $s_{ij}$ be the following elements of $GL(\h)$:
$$\varepsilon_k(y_k)=\varepsilon y_k,\quad
\varepsilon_k(y_j)=y_j,\quad s_{ij}(y_i)=y_j,\quad
s_{ij}(y_k)=y_k.$$ Let $B_n(l)$ be the subgroup of $GL(\h)$
generated by $\varepsilon_k$ and $s_{ij}$ for $1\leqs k\leqs n$ and
$1\leqs i<j\leqs n$. It is a complex
reflection group with the set of reflections
$$\mathcal{S}_n=\{\varepsilon_i^p:1\leqs i\leqs n, 1\leqs p \leqs l-1\}\bigsqcup
\{s_{ij}^{(p)}=s_{ij}\varepsilon_i^p\varepsilon_j^{-p}:1\leqs
i<j\leqs n, 1\leqs p\leqs l\}.$$
Note that there is an obvious inclusion $\mathcal{S}_{n-1}\hookrightarrow\mathcal{S}_{n}$. It yields an embedding
\begin{equation}\label{eq:inclusiongroup}
 B_{n-1}(l)\hookrightarrow B_n(l).
\end{equation}
This embedding identifies $B_{n-1}(l)$ with the parabolic subgroup of $B_{n}(l)$ given by the stabilizer of the point $b_n=(0,\ldots,0,1)\in\C^n$.

The cyclotomic rational DAHA is the algebra $H_{c}(B_n(l),\h)$. We
will use another presentation in which we replace the parameter $c$
by an $l$-tuple $\bfh=(h,h_1,\ldots,h_{l-1})$ such that
\begin{equation*}
c_{s^{(p)}_{ij}}=-h, \quad
c_{\varepsilon_p}=\frac{-1}{2}\sum_{p'=1}^{l-1}(\varepsilon^{-pp'}-1)h_{p'}.
\end{equation*}

We will denote $H_c(B_n(l),\h)$ by $H_{\bfh,n}$. The corresponding
category $\calO$ will be denoted by $\calO_{\bfh,n}$. In the rest of
the paper, we will fix the positive integer $l$. We will also fix a
positive integer $e\geqs 2$ and an $l$-tuple of integers
$\mathbf{s}=(s_1,\ldots,s_l)$. \emph{We will always assume that the
parameter $\bfh$ is given by the following formulas\,,}
\begin{equation}\label{assumptionh}
h=\frac{-1}{e},\quad h_p=\frac{s_{p+1}-s_p}{e}-\frac{1}{l},\quad
1\leqs p\leqs l-1\,.
\end{equation}

\medskip

The functor $\KZ(B_n(l),\C^n)$ goes from $\calO_{\bfh,n}$ to the
category of finite dimensional modules of a certain Hecke algebra
$\Hecke_{\mathbf{q},n}$ attached to the group $B_n(l)$. Here the
parameter is $\mathbf{q}=(q,q_1,\ldots, q_l)$ with
\begin{equation*}
q=\exp(2\pi\sqrt{-1}/e),\quad q_p=q^{s_p},\quad 1\leqs p\leqs l.
\end{equation*}
The algebra $\Hecke_{\mathbf{q},n}$ has the following presentation:
\begin{itemize}
\item Generators: $T_0, T_1,\ldots, T_{n-1}$,
\item Relations: \begin{gather*}
(T_0-q_1)\cdots(T_0-q_l)=(T_i+1)(T_i-q)=0,\quad 1\leqs i\leqs n-1, \notag \\
T_0T_1T_0T_1=T_1T_0T_1T_0,\notag \\
T_iT_j=T_jT_i,\quad\text{if }|i-j|>1,\label{pres} \\
T_iT_{i+1}T_i=T_{i+1}T_iT_{i+1},\quad 1\leqs i\leqs n-2. \notag
\end{gather*}
\end{itemize}
The algebra $\Hecke_{\mathbf{q},n}$ satisfies the assumption of
Section \ref{s:KZcommute}, i.e., it has the same dimension as $\C
B_n(l)$.

\subsection{}\label{ss:cyclot2}

For each positive integer $n$, the embedding (\ref{eq:inclusiongroup}) of $B_{n}(l)$ into $B_{n+1}(l)$ yields an embedding of Hecke algebras $$\imath_{\mathbf{q}}:
\Hecke_{\mathbf{q},n}\hookrightarrow \Hecke_{\mathbf{q},n+1},$$ see Section \ref{ss:resHecke}. Under
the presentation above this embedding is given by $$\imath_{\mathbf{q}}(T_i)=T_i,\quad\forall\ 0\leqs i\leqs n-1,$$
see \cite[Proposition 2.29]{BMR}.

We will consider the following restriction and induction functors:
\begin{eqnarray*}
E(n)=\Res_{b_n},\quad
E(n)^{\sss\Hecke}=\sideset{^{\sss\Hecke}}{^{B_{n}(l)}_{B_{n-1}(l)}}\Res,\\
F(n)=\Ind_{b_n},\quad
F(n)^{\sss\Hecke}=\sideset{^{\sss\Hecke}}{^{B_{n}(l)}_{B_{n-1}(l)}}\Ind.
\end{eqnarray*}
The algebra $\Hecke_{\mathbf{q},n}$ is symmetric (see Remark
\ref{rmq:symmetric}). Hence by Lemma \ref{heckeind} we have
$$F(n)^{\sss\Hecke}\cong\sideset{^{\sss\Hecke}}{^{B_{n}(l)}_{B_{n-1}(l)}}\coInd.$$
We will abbreviate
$$\calO_{\bfh,\N}=\bigoplus_{n\in\N}\calO_{\bfh,n},\quad \KZ=\bigoplus_{n\in\N}\KZ(B_n(l),\C^n),\quad \Hecke_{\mathbf{q},\N}\modu=\bigoplus_{n\in\N}\Hecke_{\mathbf{q},n}\modu.$$
So $\KZ$ is the Knizhnik-Zamolodchikov functor from
$\calO_{\bfh,\N}$ to $\Hecke_{\mathbf{q},\N}\modu$. Let

\begin{eqnarray*}
E=\bigoplus_{n\geqs 1}E(n),\quad E^{\sss\Hecke}=\bigoplus_{n\geqs
1}E^{\sss\Hecke}(n),\\ F=\bigoplus_{n\geqs 1}F(n),\quad
F^{\sss\Hecke}=\bigoplus_{n\geqs 1}F^{\sss\Hecke}(n).\end{eqnarray*} So
$(E^{\sss\Hecke},F^{\sss\Hecke})$ is a pair of biadjoint endo-functors of $\Hecke_{\bfq,\N}\modu$, and $(E,F)$ is a pair of biadjoint endo-functors of $\calO_{\bfh,\N}$ by Proposition \ref{leftadjunction}.

\subsection{Fock spaces.}\label{ss: fock}

Recall that an $l$-partition is an $l$-tuple $\lam=(\lam^1,\cdots,
\lam^l)$ with each $\lam^j$ a partition, that is a sequence of
integers $(\lam^j)_1\geqs\cdots\geqs(\lam^j)_k>0$. To any
$l$-partition $\lam=(\lam^1,\ldots,\lam^l)$ we attach the set
\begin{equation*}
\Upsilon_\lam=\{(a,b,j)\in \N\times\N\times(\Z/l\Z):
0<b\leqs(\lam^j)_a\}.
\end{equation*}
Write $|\lam|$ for the number of elements in this set, we say that
$\lam$ is an $l$-partition of $|\lam|$. For $n\in\N$ we denote by
$\mathcal{P}_{n,l}$ the set of $l$-partitions of $n$. For any
$l$-partition $\mu$ such that $\Upsilon_\mu$ contains
$\Upsilon_\lam$, we write $\mu/\lam$ for the complement of
$\Upsilon_\lam$ in $\Upsilon_\mu$. Let $|\mu/\lam|$ be the number of
elements in this set. To each element $(a,b,j)$ in $\Upsilon_\lam$
we attach an element $$\res ((a,b,j))=b-a+s_j\in\Z/e\Z,$$ called the
residue of $(a,b,j)$. Here $s_j$ is the $j$-th component of our
fixed $l$-tuple $\bfs$.

The Fock space with multi-charge $\mathbf{s}$ is the $\C$-vector
space $\mathcal{F}_\mathbf{s}$ spanned by the $l$-partitions, i.e.,
$$\mathcal{F}_\mathbf{s}=\bigoplus_{n\in\N}\bigoplus_{\lam\in\mathcal{P}_{n,l}}\C\lam.$$
It admits an integrable $\widehat{\Lie{sl}}_e$-module structure with
the Chevalley generators acting as follows (cf. \cite{JMMO}): for
any $i\in\Z/e\Z$,
\begin{equation}\label{fockei}
e_i(\lam)=\sum_{|\lam/\mu|=1,\res(\lam/\mu)=i}\mu,\quad
f_i(\lam)=\sum_{|\mu/\lam|=1,\res(\mu/\lam)=i}\mu.
\end{equation}
For each $n\in\Z$ set $\Lambda_n=\Lambda_{\underline{n}}$, where
$\underline{n}$ is the image of $n$ in $\Z/e\Z$ and
$\Lambda_{\underline{n}}$ is the corresponding fundamental weight of
$\sle$. Set
$$\Lambda_{\mathbf{s}}=\Lambda_{\underline{s_1}}+\cdots+\Lambda_{\underline{s_l}}.$$
Each $l$-partition $\lam$ is a weight vector of
$\mathcal{F}_\mathbf{s}$ with weight
\begin{equation}\label{wt}
\wt(\lam)=\Lambda_{\mathbf{s}}-\sum_{i\in\Z/e\Z}n_i\al_i,
\end{equation}
where $n_i$ is the number of elements in the set $\{
(a,b,j)\in\Upsilon_\lam: \res((a,b,j))=i\}$. We will call
$\wt(\lam)$ the weight of $\lam$.

In \cite[Section $6.1.1$]{R} an explicit bijection was given between
the sets $\Irr(B_n(l))$ and $\mathcal{P}_{n,l}$. Using this
bijection we identify these two sets and index the standard and
simple modules in $\calO_{\bfh,\N}$ by $l$-partitions. In
particular, we have an isomorphism of $\C$-vector spaces
\begin{equation}\label{Fock}
\theta:
K(\calO_{\bfh,\N})\overset{\sim}\ra\mathcal{F}_{\mathbf{s}},\quad
[\Delta(\lam)]\mapsto \lam.
\end{equation}

\subsection{}\label{kzspecht}

We end this section by the following lemma. Recall that the functor
$\KZ$ gives a map $K(\calO_{\bfh,n})\ra K(\Hecke_{\mathbf{q},n})$.
For any $l$-partition $\lam$ of $n$ let $S_\lam$ be the
corresponding Specht module in $\Hecke_{\mathbf{q},n}\modu$, see
\cite[Definition $13.22$]{A} for its definition.
\begin{lemme}\label{Specht}
In $K(\Hecke_{\mathbf{q},n})$, we have
$\KZ([\Delta(\lam)])=[S_\lam]$.
\end{lemme}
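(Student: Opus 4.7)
\emph{Proof plan.} I would argue by induction on $n$. The base case $n = 0$ is immediate: $\Delta(\emptyset)$ and $S_\emptyset$ are both the one-dimensional trivial module over $H_{\bfh,0}=\C=\Hecke_{\bfq,0}$, and $\KZ$ restricted to $\calO_{\bfh,0}$ is the identity.

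For the inductive step, let $\lambda$ be an $l$-partition of $n\geqs 1$. The classical branching rule for $B_{n-1}(l)\subset B_n(l)$ decomposes the irreducible $\C B_n(l)$-module $\lambda$ as $\Res^{B_n(l)}_{B_{n-1}(l)}(\lambda)\cong\bigoplus_\mu \mu$, where $\mu$ runs over $l$-partitions of $n-1$ obtained from $\lambda$ by removing one removable box. Combining this with Proposition~\ref{standard}(2) yields
\begin{equation*}
[E(n)\Delta(\lambda)]=\sum_\mu[\Delta(\mu)]\quad\text{in}\ K(\calO_{\bfh,n-1}).
\end{equation*}
Applying $\KZ'$, invoking Theorem~\ref{iso}, and using the induction hypothesis, I get
\begin{equation*}
E(n)^{\sss\Hecke}\,[\KZ(\Delta(\lambda))]=\KZ'([E(n)\Delta(\lambda)])=\sum_\mu[S_\mu].
\end{equation*}
On the Hecke side, the corresponding branching rule for Specht modules of the cyclotomic Hecke algebra (Ariki--Koike, via the cellular structure of $\Hecke_{\bfq,n}$) furnishes the identical formula $E(n)^{\sss\Hecke}[S_\lambda]=\sum_\mu[S_\mu]$. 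Hence the difference $d_\lambda:=[\KZ(\Delta(\lambda))]-[S_\lambda]$ lies in $\ker E(n)^{\sss\Hecke}$.

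The main obstacle is to promote this identity-after-restriction to genuine equality of classes, since $E(n)^{\sss\Hecke}$ need not be injective on complexified Grothendieck groups. My preferred route is to refine $E(n)^{\sss\Hecke}$ into its $i$-restriction components $E_i^{\sss\Hecke}$ ($i\in\Z/e\Z$) using the central decomposition afforded by the Jucys--Murphy elements, and correspondingly refine $E(n)$ on the DAHA side via the generalized $\eu$-eigenspace decomposition; these refinements are matched under the central isomorphism $\gamma$ of Lemma~\ref{lem:center} (Theorem~\ref{iso} being compatible with central characters). Since both $\KZ(\Delta(\lambda))$ and $S_\lambda$ have $\C$-dimension $\dim\lambda$ and the iterated $i$-refined restrictions to $K(\Hecke_{\bfq,0})=\C$ record enough invariants to determine a class (the content sequences indexed by standard $\lambda$-tableaux), we conclude $d_\lambda=0$.

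An alternative, perhaps cleaner, route would be to bypass the Grothendieck-group manipulations and establish directly that $\KZ(\Delta(\lambda))\cong S_\lambda$ as $\Hecke_{\bfq,n}$-modules, by identifying the monodromy representation of the KZ connection on $\C[\h_{reg}]\otimes\lambda$ with the explicit cellular/Specht presentation of $S_\lambda$. This is more work in general but makes the statement manifest at the module level.
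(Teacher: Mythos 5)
Your approach is genuinely different from the paper's, but it has a fatal gap at exactly the place you flag as the ``main obstacle.''

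The paper does not argue by induction on $n$ at all. It deforms the parameter $\bfh$ to a generic one over a one-variable ring $R$, where both the category $\calO$ over the fraction field $K$ and the Hecke algebra $\Hecke_{K,\bfu,n}$ become split semisimple. Over $K$ the images $T_K(\lam)=\KZ_K(\Delta_K(\lam))$ and the Specht modules $S_{K,\lam}$ both exhaust $\Irr(\Hecke_{K,\bfu,n})$, so there is a bijection $\varphi$ with $T_K(\lam)\cong S_{K,\varphi(\lam)}$. The content of the proof is to show $\varphi=\mathrm{id}$, which is done by specializing to $\varpi=0$ where $\Hecke$ becomes the group algebra $\C B_n(l)$ and both $T$ and $S$ reduce to $\lam$ itself, invoking Tits' deformation theorem. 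One then descends from $K$ to the original parameters through the decomposition map. This bypasses any appeal to injectivity of restriction functors.

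Your plan, by contrast, shows $E(n)^{\sss\Hecke}(d_\lam)=0$ and then tries to promote this to $d_\lam=0$ by refining to $i$-restrictions and iterating down to $n=0$. The claim that ``iterated $i$-refined restrictions to $K(\Hecke_{\bfq,0})=\C$ record enough invariants to determine a class'' is false. Concretely, take $l=1$, $e=2$, $n=2$: the partitions $(2)$ and $(1,1)$ have residue sets $\{0,1\}$, and each has a unique standard tableau, with the same residue sequence $(0,1)$; hence $E_{i_1}^{\sss\Hecke}E_{i_2}^{\sss\Hecke}[S_{(2)}]=E_{i_1}^{\sss\Hecke}E_{i_2}^{\sss\Hecke}[S_{(1,1)}]$ for all $(i_1,i_2)$, yet $[S_{(2)}]\neq[S_{(1,1)}]$ in $K(\Hecke_{\bfq,2})$. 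So the map to iterated $i$-restriction data is not injective, and you cannot conclude $d_\lam=0$ from it.

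There is a second, subtler problem. To compare $E_i(n)$ with $E_i(n)^{\sss\Hecke}$ applied specifically to $\Delta(\lam)$, you need to know which eigenvalue of $D_n(z)=\gamma^{-1}(C_n(z))$ occurs on $\Delta(\lam)$. In the paper this is exactly Proposition~\ref{dv}(2), whose proof \emph{cites Lemma~\ref{Specht}}. So any argument for Lemma~\ref{Specht} that already assumes the $i$-refined decompositions on the DAHA side match the $l$-partition combinatorics is circular: Lemma~\ref{Specht} is the statement that pins down those central characters in the first place. Your ``alternative route'' (a direct module isomorphism $\KZ(\Delta(\lam))\cong S_\lam$ via monodromy) would avoid both issues, but as you note it requires a nontrivial analytic-combinatorial identification that your sketch does not attempt.
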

\begin{proof}
Let $R$ be any commutative ring over $\C$. For any $l$-tuplet
$\bfz=(z,z_1,\ldots,z_{l-1})$ of elements in $R$ one defines the
rational DAHA over $R$ attached to $B_n(l)$ with parameter $\bfz$ in
the same way as before. Denote it by $H_{R,\bfz,n}$. The standard
modules $\Delta_R(\lam)$ are also defined as before. For any
$(l+1)$-tuplet $\bfu=(u,u_1,\ldots, u_l)$ of invertible elements in
$R$ the Hecke algebra $\Hecke_{R,\bfu,n}$ over $R$ attached to
$B_n(l)$ with parameter $\bfu$ is defined by the same presentation
as in Section \ref{ss:cyclot1}. The Specht modules $S_{R,\lam}$ are
also well-defined (see \cite{A}). If $R$ is a field, we will write
$\Irr(\Hecke_{R,\bfu,n})$ for the set of isomorphism classes of
simple $\Hecke_{R,\bfu,n}$-modules.

Now, fix $R$ to be the ring of holomorphic functions of one variable
$\varpi$. We choose $\bfz=(z,z_1,\ldots,z_{l-1})$ to be given by
\begin{equation*}
  z=l\varpi,\quad z_p=(s_{p+1}-s_p)l\varpi+e\varpi,\quad 1\leqs p\leqs
  l-1.
\end{equation*}
Write $x=\exp(-2\pi\sqrt{-1}\varpi)$. Let $\bfu=(u,u_1,\ldots, u_l)$
be given by
\begin{equation*}
  u=x^{l},\quad u_p=\vep^{p-1}x^{s_pl-(p-1)e},\quad 1\leqs p\leqs
  l.
\end{equation*}
By \cite[Theorem 4.12]{BMR} the same definition as in Section
\ref{ss:KZ} yields a well defined $\Hecke_{R,\bfu,n}$-module
$$T_{R}(\lam)=\KZ_{R}(\Delta_{R}(\lam)).$$
It is a free $R$-module of finite rank
and it commutes with the base
change functor by the existence and unicity theorem for linear
differential equations, i.e., for any ring homomorphism $R\ra R'$
over $\C$, we have a canonical isomorphism of
$\Hecke_{R',\bfu,n}$-modules
\begin{equation}\label{eq:basechange}
  T_{R'}(\lam)=\KZ_{R'}(\Delta_{R'}(\lam))\cong
T_{R}(\lam)\otimes_RR'.
\end{equation} In particular, for any ring
homomorphism $a: R\ra\C$. Write $\C_a$ for the vector space $\C$
equipped with the $R$-module structure given by $a$. Let $a(\bfz)$,
$a(\bfu)$ denote the images of $\bfz$, $\bfu$ by $a$. Note that we
have $H_{a(\bfz),n}=H_{R,\bfz,n}\otimes_R\C_a$ and
$\Hecke_{a(\bfu),n}=\Hecke_{R,\bfu,n}\otimes_R\C_a$. Denote the
Knizhnik-Zamolodchikov functor of $H_{a(\bfz),n}$ by $\KZ_{a(\bfz)}$
and the standard module corresponding to $\lam$ by
$\Delta_{a(\bfz)}(\lam)$. Then we have an isomorphism of $\Hecke_{a(\bfu),n}$-modules
\begin{equation*}
  T_{R}(\lam)\otimes_{R}\C_{a}\cong\KZ_{a(\bfz)}(\Delta_{a(\bfz)}(\lam)).
\end{equation*}

Let $K$ be the fraction field of $R$. By \cite[Theorem 2.19]{GGOR} the category
$\calO_{K,\bfz,n}$ is split semisimple. In particular, the standard
modules are simple. We have
$$\{T_K(\lam),\lam\in\calP_{n,l}\}=\Irr(\Hecke_{K,\bfu,n}).$$
The Hecke algebra
$\Hecke_{K,\bfu,n}$ is also split semisimple and we have
$$\{S_{K,\lam},\lam\in\calP_{n,l}\}=\Irr(\Hecke_{K,\bfu,n}),$$
see for example \cite[Corollary 13.9]{A}. Thus there is a bijection
$\varphi: \calP_{n,l}\ra\calP_{n,l}$ such that $T_K(\lam)$ is
isomorphic to $S_{K,\varphi(\lam)}$ for all $\lam$. We claim that
$\varphi$ is identity. To see this, consider the algebra
homomorphism $a_0:R\ra\C$ given by $\varpi\mapsto 0$. Then
$\Hecke_{a_0(\bfu),n}$ is canonically isomorphic to the group
algebra $\C B_n(l)$, thus it is semi-simple. Let $\overline{K}$ be
the algebraic closure of $K$. Let $\overline{R}$ be the integral
closure of $R$ in $\overline{K}$ and fix an extension
$\overline{a}_0$ of $a_0$ to $\overline{R}$. By Tit's deformation
theorem (see for example \cite[Section 68A]{CuR}), there is a
bijection
$$\psi:\Irr(\Hecke_{\overline{K},\bfu,n})\simra\Irr(\Hecke_{a_0(\bfu),n})$$
such that
\begin{equation*}
  \psi(T_{\overline{K}}(\lam))=T_{\overline{R}}(\lam)\otimes_{\overline{R}}\C_{\overline{a}_0},\quad
  \psi(S_{\overline{K},\lam})=S_{\overline{R},\lam}\otimes_{\overline{R}}\C_{\overline{a}_0}.
\end{equation*}
By the definition of Specht modules we have
$S_{\overline{R},\lam}\otimes_{\overline{R}}\C_{\overline{a}_0}\cong\lam$
as $\C B_n(l)$-modules. On the other hand, since $a_0(\bfz)=0$, by
(\ref{eq:basechange}) we have the following isomorphisms
\begin{eqnarray*}
  T_{\overline{R}}(\lam)\otimes_{\overline{R}}\C_{\overline{a}_0}&\cong&T_R(\lam)\otimes_R\C_{a_0}\\
  &\cong&\KZ_0(\Delta_0(\lam))\\
  &=&\lam.
\end{eqnarray*}
So $\psi(T_{\overline{K}}(\lam))=\psi(S_{\overline{K},\lam})$.
Hence we have $T_{\overline{K}}(\lam)\cong S_{\overline{K},\lam}$.
Since $T_{\overline{K}}(\lam)=T_K(\lam)\otimes_K\overline{K}$ is
isomorphic to
$S_{\overline{K},\varphi(\lam)}=S_{K,\varphi(\lam)}\otimes_K\overline{K}$,
we deduce that $\varphi(\lam)=\lam$. The claim is proved.

Finally, let $\mathfrak{m}$ be the maximal ideal of $R$ consisting
of the functions vanishing at $\varpi=-1/el$. Let $\widehat R$ be
the completion of $R$ at $\mathfrak{m}$. It is a discrete valuation
ring with residue field $\C$. Let $a_1:\widehat R\ra
\widehat{R}/\mathfrak{m}\widehat{R}=\C$ be the quotient map. We have
$a_1(\bfz)=\bfh$ and $a_1(\bfu)=\bfq$. Let $\widehat{K}$ be the
fraction field of $\widehat R$. Recall that the decomposition map is
given by
$$d: K(\Hecke_{\widehat{K},\bfu,n})\ra K(\Hecke_{\bfq,n}),\quad [M]\mapsto [L\otimes_{\widehat{R}}\C_{a_1}].$$
Here $L$ is any free $\widehat{R}$-submodule of $M$ such that
$L\otimes_{\widehat{R}}\widehat{K}=M$. The choice of $L$ does not
affect the class $[L\otimes_{\widehat{R}}\C_{a_1}]$ in
$K(\Hecke_{\bfq,n})$. See \cite[Section 13.3]{A} for details on this
map. Now, observe that we have
\begin{eqnarray*}
  &d([S_{\widehat{K},\lam}])= [S_{\widehat{R},\lam}\otimes_{\widehat{R}}\C_{a_1}]=[S_\lam],\\
  &d([T_{\widehat{K}}(\lam)])= [T_{\widehat{R}}(\lam)\otimes_{\widehat{R}}\C_{a_1}]=[\KZ(\Delta(\lam))].
\end{eqnarray*}
Since $\widehat{K}$ is an extension of $K$, by the last paragraph we
have $[S_{\widehat{K},\lam}]=[T_{\widehat{K}}(\lam)]$. We deduce
that $[\KZ(\Delta(\lam))]=[S_\lam]$.
\end{proof}

\section{$i$-Restriction and $i$-Induction}\label{iresiind}

We define in this section the $i$-restriction and $i$-induction
functors for the cyclotomic rational DAHA's. This is done in
parallel with the Hecke algebra case.

\subsection{}\label{ss:ireshecke}

Let us recall the definition of the $i$-restriction and
$i$-induction functors for $\Hecke_{\mathbf{q},n}$. First define the
Jucy-Murphy elements $J_0,\ldots, J_{n-1}$ in
$\Hecke_{\mathbf{q},n}$ by
\begin{equation*}
J_0=T_0,\quad J_i=q^{-1}T_iJ_{i-1}T_i\quad\text{ for }1\leqs i\leqs
n-1.
\end{equation*}
Write $Z(\Hecke_{\mathbf{q},n})$ for the center of
$\Hecke_{\mathbf{q},n}$. For any symmetric polynomial $\sigma$ of
$n$ variables the element $\sigma(J_0,\ldots,J_{n-1})$ belongs to
$Z(\Hecke_{\mathbf{q},n})$ (cf. \cite[Section $13.1$]{A}). In
particular, if $z$ is a formal variable the polynomial
$C_n(z)=\prod_{i=0}^{n-1}(z-J_i)$ in $\Hecke_{\mathbf{q},n}[z]$ has
coefficients in $Z(\Hecke_{\mathbf{q},n})$.

Now, for any $a(z)\in\C(z)$ let $P_{n,a(z)}$ be the exact
endo-functor of the category $\Hecke_{\mathbf{q},n}\modu$ that maps an object $M$ to the generalized eigenspace of $C_n(z)$ in $M$
with the eigenvalue $a(z)$.

For any $i\in\Z/e\Z$ the $i$-restriction functor and $i$-induction
functor
$$E_i(n)^{\sss\Hecke}: \Hecke_{\bfq,n}\modu\ra\Hecke_{\bfq,n-1}\modu,
\quad F_i(n)^{\sss\Hecke}:
\Hecke_{\bfq,n-1}\modu\ra\Hecke_{\bfq,n}\modu$$ are defined as
follows (cf. \cite[Definition 13.33]{A}):
\begin{eqnarray*}
E_i(n)^{\sss\Hecke}=\bigoplus_{a(z)\in\C(z)}P_{n-1,a(z)/(z-q^i)}\circ E(n)^{\sss\Hecke}\circ P_{n,a(z)},\label{e}\\
F_i(n)^{\sss\Hecke}=\bigoplus_{a(z)\in\C(z)}P_{n,a(z)(z-q^i)}\circ
F(n)^{\sss\Hecke}\circ P_{n-1,a(z)}.\label{f}
\end{eqnarray*}
We will write
\begin{equation*}
E^{\sss\Hecke}_i=\bigoplus_{n\geqs 1}E_i(n)^{\sss\Hecke},\quad
F^{\sss\Hecke}_i=\bigoplus_{n\geqs 1}F_i(n)^{\sss\Hecke}.
\end{equation*}
They are endo-functors of $\Hecke_{\bfq,\N}$. For each $\lam\in\mathcal{P}_{n,l}$ set $$a_\lam(z)=\prod_{v
\in\Upsilon_\lam}(z-q^{\res(v)}).$$ We recall some properties of
these functors in the following proposition.
\begin{prop}\label{hv}
(1) The functors $E_i(n)^{\sss\Hecke}$, $F_i(n)^{\sss\Hecke}$ are exact. The
functor $E_i(n)^{\sss\Hecke}$ is biadjoint to
$F_i(n)^{\sss\Hecke}$.

(2) For any $\lam\in\mathcal{P}_{n,l}$ the element $C_n(z)$ has a
unique eigenvalue on the Specht module $S_\lam$. It is equal to
$a_\lam(z)$.

(3) We have
\begin{equation*}
E_i(n)^{\sss\Hecke}([S_\lam])=\sum_{\res(\lam/\mu)=i}[S_\mu],\qquad
F_i(n)^{\sss\Hecke}([S_\lam])=\sum_{\res(\mu/\lam)=i}[S_\mu].
\end{equation*}

(4) We have
\begin{equation*}
E(n)^{\sss\Hecke}=\bigoplus_{i\in\Z/e\Z}E_i(n)^{\sss\Hecke},\quad
F(n)^{\sss\Hecke}=\bigoplus_{i\in\Z/e\Z}F_i(n)^{\sss\Hecke}.
\end{equation*}
\end{prop}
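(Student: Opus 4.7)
All four parts of the proposition are classical for the Ariki--Koike algebra $\Hecke_{\bfq,n}$ (see \cite[Chapter 13]{A}); I outline how I would carry out each. For (1), the projection $P_{n,a(z)}$ onto the generalized $a(z)$-eigenspace of the central operator $C_n(z)$ is multiplication by a central idempotent $e_{n,a(z)}\in Z(\Hecke_{\bfq,n})$, hence is exact and satisfies the adjunction $\Hom(e_{n,a(z)}M,N)\cong\Hom(M,e_{n,a(z)}N)$. Combined with the biadjointness of $(E(n)^{\sss\Hecke},F(n)^{\sss\Hecke})$ supplied by Lemma \ref{heckeind}, taking the right (resp. left) adjoint of the composition defining $E_i(n)^{\sss\Hecke}$ exchanges the two $P$-factors and substitutes $a(z)=b(z)(z-q^i)$; this yields precisely $F_i(n)^{\sss\Hecke}$, and exactness of each factor gives exactness of the composition.

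For (2), I would invoke the standard seminormal-type description: $S_\lam$ admits a basis $\{v_T\}$ indexed by standard $\lam$-tableaux on which each $J_k$ acts upper triangularly, and the multiset of diagonal entries of $(J_0,J_1,\ldots,J_{n-1})$ on $v_T$ equals $\{q^{\res v}:v\in\Upsilon_\lam\}$ independently of $T$. Consequently $C_n(z)=\prod_{k=0}^{n-1}(z-J_k)$ acts on each $v_T$ with constant diagonal value $a_\lam(z)$, giving the claimed unique generalized eigenvalue. Part (3) then follows by combining (2) with the classical branching rule $[E(n)^{\sss\Hecke}(S_\lam)]=\sum_{\mu:|\lam/\mu|=1}[S_\mu]$ for Specht modules: since $a_\mu(z)=a_\lam(z)/(z-q^{\res(\lam/\mu)})$, the projector $P_{n-1,a_\lam(z)/(z-q^i)}$ extracts exactly those summands with $\res(\lam/\mu)=i$; the formula for $F_i(n)^{\sss\Hecke}$ is obtained analogously, using biadjointness.

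For (4), decomposing by the central spectral projectors gives $E(n)^{\sss\Hecke}=\bigoplus_{a,b}P_{n-1,b(z)}\circ E(n)^{\sss\Hecke}\circ P_{n,a(z)}$, and what must be shown is that only pairs $(a,b)$ with $b(z)=a(z)/(z-q^i)$ for some $i\in\Z/e\Z$ contribute. For this I would use the identity $C_n(z)=C_{n-1}(z)(z-J_{n-1})$ in $\Hecke_{\bfq,n}[z]$: since $C_{n-1}(z)$ and $J_{n-1}$ commute, on the image of $P_{n,a(z)}$ one can pass to joint generalized eigenspaces of this commuting pair, and any joint eigenvalue $(b(z),\beta)$ must satisfy $b(z)(z-\beta)=a(z)$. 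The main obstacle is justifying that $\beta$ is always a power of $q$; this I would deduce from (2) by reducing to composition factors, since every simple $\Hecke_{\bfq,n}$-module is a quotient of some Specht module $S_\lam$, on which $C_n(z)$ acts by $a_\lam(z)=\prod_{v\in\Upsilon_\lam}(z-q^{\res v})$, forcing $\beta\in\{q^i:i\in\Z/e\Z\}$. The decomposition of $F(n)^{\sss\Hecke}$ follows by biadjointness.
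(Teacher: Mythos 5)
Your proof is correct and follows essentially the same route as the paper, which simply cites Ariki's book \cite[Theorem 13.21(2), Lemmas 13.32, 13.37]{A} for parts (2)--(4) and calls (1) obvious; your proposal fleshes out the content behind those citations (central idempotents for (1), seminormal/Jucys--Murphy spectra for (2), the Specht branching rule for (3), and the factorization $C_n(z)=C_{n-1}(z)(z-J_{n-1})$ together with the residue description of eigenvalues for (4)).
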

\begin{proof}
 Part $(1)$ is obvious. See \cite[Theorem $13.21$(2)]{A} for $(2)$ and \cite[Lemma $13.37$]{A} for $(3)$. Part $(4)$ follows from (3) and \cite[Lemma $13.32$]{A}.
\end{proof}

\subsection{}\label{iresdaha}

By Lemma \ref{lem:center}(1) we have an algebra isomorphism
$$\gamma: Z(\calO_{\bfh,n})\overset{\sim}\ra
Z(\Hecke_{\mathbf{q},n}).$$ So there are unique elements
$K_1,\ldots,K_n\in Z(\calO_{\bfh,n})$ such that the polynomial
$$D_n(z)=z^n+K_1z^{n-1}+\cdots+K_n$$ maps to $C_n(z)$ by $\gamma$. Since the
elements $K_1,\ldots,K_n$ act on simple modules by scalars and the
category $\calO_{\bfh,n}$ is artinian, every module $M$ in
$\calO_{\bfh,n}$ is a direct sum of generalized eigenspaces of
$D_n(z)$. For $a(z)\in\C(z)$ let $Q_{n,a(z)}$ be the exact
endo-functor of $\calO_{\bfh,n}$ which maps an object $M$ to the generalized eigenspace of $D_n(z)$ in $M$ with the eigenvalue
$a(z)$.
\begin{df}\label{def}
The \emph{$i$-restriction} functor and the \emph{$i$-induction}
functor
$$E_i(n): \calO_{\bfh,n}\ra\calO_{\bfh,n-1},\quad F_i(n): \calO_{\bfh,n-1}\ra\calO_{\bfh,n}$$
are given by
\begin{eqnarray*}
E_i(n)=\bigoplus_{a(z)\in\C(z)}Q_{n-1,a(z)/(z-q^i)}\circ E(n)\circ Q_{n,a(z)},\\
F_i(n)=\bigoplus_{a(z)\in\C(z)}Q_{n,a(z)(z-q^i)}\circ F(n)\circ
Q_{n-1,a(z)}.
\end{eqnarray*}
\end{df}
We will write
\begin{equation}\label{ireso}
E_i=\bigoplus_{n\geqs 1}E_i(n),\quad F_i=\bigoplus_{n\geqs 1}F_i(n).
\end{equation}

We have the following proposition.
\begin{prop}\label{isoi}
For any $i\in\Z/e\Z$ there are isomorphisms of functors
\begin{eqnarray*}
\KZ\circ E_i(n)\cong E^{\sss\Hecke}_i(n)\circ\KZ, \quad \KZ\circ
F_i(n)\cong F^{\sss\Hecke}_i(n)\circ\KZ.
\end{eqnarray*}
\end{prop}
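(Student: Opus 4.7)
The plan is to reduce the statement to three ingredients already available: Theorem \ref{iso}, Corollary \ref{indiso} (together with Lemma \ref{heckeind} to identify $\coIndh$ with $F^{\sss\Hecke}(n)$), and Lemma \ref{lem:center}(2), which asserts that $\KZ$ intertwines the action of the center of $\calO_{\bfh,n}$ with the action of the center of $\Hecke_{\bfq,n}$ via the isomorphism $\gamma$. By construction $\gamma(D_n(z))=C_n(z)$, so the ``central'' pieces of the definitions of $E_i(n)$ and $E_i(n)^{\sss\Hecke}$ are tautologically compatible under $\KZ$.

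The key step is to establish, for every $a(z)\in\C(z)$, an isomorphism of functors
\begin{equation*}
\KZ\circ Q_{n,a(z)}\cong P_{n,a(z)}\circ\KZ.
\end{equation*}
For this, fix $M\in\calO_{\bfh,n}$. Since $D_n(z)$ has central coefficients, $M$ decomposes as $M=\bigoplus_{a(z)}Q_{n,a(z)}(M)$, where $(D_n(z)-a(z))^N$ annihilates $Q_{n,a(z)}(M)$ for $N$ large enough. Applying the exact functor $\KZ$ and invoking Lemma \ref{lem:center}(2) together with the equality $\gamma(D_n(z))=C_n(z)$, the operator $(C_n(z)-a(z))^N$ annihilates $\KZ(Q_{n,a(z)}(M))$, so $\KZ(Q_{n,a(z)}(M))\subseteq P_{n,a(z)}(\KZ(M))$. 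Because $\KZ$ preserves direct sums and both sides aggregate over $a(z)$ to $\KZ(M)=\bigoplus_{a(z)} P_{n,a(z)}(\KZ(M))$, the inclusions must be equalities. This gives the desired intertwining, and the isomorphism is clearly functorial in $M$.

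Granted this, the $E_i$-statement is a direct computation:
\begin{align*}
\KZ\circ E_i(n) &=\bigoplus_{a(z)}\KZ\circ Q_{n-1,a(z)/(z-q^i)}\circ E(n)\circ Q_{n,a(z)}\\
&\cong\bigoplus_{a(z)}P_{n-1,a(z)/(z-q^i)}\circ\KZ\circ E(n)\circ Q_{n,a(z)}\\
&\cong\bigoplus_{a(z)}P_{n-1,a(z)/(z-q^i)}\circ E^{\sss\Hecke}(n)\circ\KZ\circ Q_{n,a(z)}\\
&\cong\bigoplus_{a(z)}P_{n-1,a(z)/(z-q^i)}\circ E^{\sss\Hecke}(n)\circ P_{n,a(z)}\circ\KZ\\
&=E^{\sss\Hecke}_i(n)\circ\KZ,
\end{align*}
where the first and third isomorphisms come from the key step above, and the second from Theorem \ref{iso}. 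The argument for $F_i$ is entirely parallel, using Corollary \ref{indiso} and Lemma \ref{heckeind} (together with Remark \ref{rmq:symmetric}, which applies to $B_n(l)$) to identify $\KZ\circ F(n)\cong F^{\sss\Hecke}(n)\circ\KZ$.

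The main potential obstacle is the compatibility isomorphism $\KZ\circ Q_{n,a(z)}\cong P_{n,a(z)}\circ\KZ$: one must be careful that the polynomial relation $(D_n(z)-a(z))^N=0$ on $Q_{n,a(z)}(M)$ is preserved termwise by $\KZ$, which is precisely what Lemma \ref{lem:center}(2) guarantees once one notes that each coefficient of $D_n(z)$ in $z$ lies in $Z(\calO_{\bfh,n})$ and is mapped by $\gamma$ to the corresponding coefficient of $C_n(z)$. Once this compatibility is in hand, the rest of the proof is a bookkeeping exercise combining the known intertwiners for $E(n)$ and $F(n)$.
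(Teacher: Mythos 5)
Your proof is correct and follows essentially the same route as the paper's. The paper's argument is terse: it notes that $\gamma(D_n(z))=C_n(z)$ and Lemma \ref{lem:center}(2) give $\KZ\circ Q_{n,a(z)}\cong P_{n,a(z)}\circ\KZ$, and then simply says the proposition follows from Theorem \ref{iso} and Corollary \ref{indiso}. You have spelled out the same chain in full, including the generalized-eigenspace argument for the key compatibility isomorphism and the reduction of $\coIndh$ to $F^{\sss\Hecke}(n)$ via Lemma \ref{heckeind} and Remark \ref{rmq:symmetric}, which the paper already records in Section \ref{ss:cyclot2}.
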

\begin{proof}
Since $\gamma(D_n(z))=C_n(z)$, by Lemma \ref{lem:center}(2) for any
$a(z)\in\C(z)$ we have $$\KZ\circ Q_{n,a(z)}\cong P_{n,
a(z)}\circ\KZ.$$ So the proposition follows from Theorem \ref{iso}
and Corollary \ref{indiso}.
\end{proof}

The next proposition is the DAHA version of Proposition \ref{hv}.

\begin{prop}\label{dv}
(1) The functors $E_i(n)$, $F_i(n)$ are exact. The functor $E_i(n)$
is biadjoint to $F_i(n)$.

(2) For any $\lam\in\mathcal{P}_{n,l}$ the unique eigenvalue of
$D_n(z)$ on the standard module $\Delta(\lam)$ is $a_\lam(z)$.

(3) We have the following equalities
\begin{equation}\label{Pierii}
E_i(n)([\Delta(\lam)])=\sum_{\res(\lam/\mu)=i}[\Delta(\mu)],\qquad
F_i(n)([\Delta(\lam)])=\sum_{\res(\mu/\lam)=i}[\Delta(\mu)].
\end{equation}

(4) We have
\begin{equation*}
E(n)=\bigoplus_{i\in\Z/e\Z}E_i(n),\quad
F(n)=\bigoplus_{i\in\Z/e\Z}F_i(n).
\end{equation*}
\end{prop}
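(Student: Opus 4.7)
I would prove the four parts in the order $(2), (3), (4), (1)$, as each uses the previous. The common thread is transport of the analogous Hecke statements in Proposition \ref{hv} through the functor $\KZ$ and the central-element isomorphism $\gamma$ of Lemma \ref{lem:center}, together with the identity $\KZ([\Delta(\lam)]) = [S_\lam]$ from Lemma \ref{Specht}. For $(2)$, since $\Delta(\lam)$ is indecomposable the central element $D_n(z)$ acts on it with a single generalized eigenvalue $b_\lam(z)$. By Lemma \ref{lem:center}(2) and the defining relation $\gamma(D_n(z))=C_n(z)$, the element $C_n(z)$ then acts on $\KZ(\Delta(\lam))$ with the same generalized eigenvalue $b_\lam(z)$. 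On the other hand, Lemma \ref{Specht} gives $[\KZ(\Delta(\lam))]=[S_\lam]$, so the two modules share composition factors; each composition factor is simple and thus $C_n(z)$ acts on it as a scalar, which by Proposition \ref{hv}(2) is forced to be $a_\lam(z)$. Since $[S_\lam]\neq 0$ implies $\KZ(\Delta(\lam))\neq 0$, we conclude $b_\lam(z)=a_\lam(z)$.

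For $(3)$ the group-theoretic content comes from the commutative diagram in Proposition \ref{Res}(2). Under the identification of $\Irr(B_n(l))$ with $\mathcal{P}_{n,l}$, the classical branching rule for $G(l,1,n)=B_n(l)$ gives
$$\Res^{B_n(l)}_{B_{n-1}(l)}([\lam])=\sum_{|\lam/\mu|=1}[\mu],\qquad \Ind^{B_n(l)}_{B_{n-1}(l)}([\lam])=\sum_{|\mu/\lam|=1}[\mu],$$
hence $E(n)([\Delta(\lam)])=\sum_{|\lam/\mu|=1}[\Delta(\mu)]$ and $F(n)([\Delta(\lam)])=\sum_{|\mu/\lam|=1}[\Delta(\mu)]$. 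By $(2)$, $\Delta(\mu)$ lies in the $Q$-block with eigenvalue $a_\mu(z)=a_\lam(z)/(z-q^{\res(\lam/\mu)})$ in the restriction direction, and $a_\mu(z)=a_\lam(z)(z-q^{\res(\mu/\lam)})$ in the induction direction. Applying the projection functor built into Definition \ref{def} selects exactly those $\mu$ whose added/removed box has residue $i$, giving both Pieri-type formulas.

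For $(4)$, I first treat standard modules. By Proposition \ref{standard}(1) the module $E(n)(\Delta(\lam))$ has a standard filtration whose subquotients are $\Delta(\mu)$ with $|\lam/\mu|=1$, and on each of these the central element $D_{n-1}(z)$ acts with generalized eigenvalue $a_\lam(z)/(z-q^{\res(\lam/\mu)})$. Decomposing $E(n)(\Delta(\lam))$ into its generalized $D_{n-1}(z)$-eigenspaces thus yields $E(n)(\Delta(\lam))=\bigoplus_i E_i(n)(\Delta(\lam))$. For an arbitrary $M\in\calO_{\bfh,n}$ I claim that every generalized eigenvalue of $D_{n-1}(z)$ occurring on $E(n)(M)$ has the form $a(z)/(z-q^i)$ for some generalized eigenvalue $a(z)$ of $D_n(z)$ on $M$ and some $i\in\Z/e\Z$. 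Indeed, using the block decomposition of $K(\calO_{\bfh,n})$ by generalized central character, any simple composition factor $L(\nu)$ of $M$ expands as $[L(\nu)]=\sum c_\mu[\Delta(\mu)]$ with $a_\mu(z)=a_\nu(z)$; the Pieri formula of $(3)$ then shows the eigenvalues arising in $E(n)(M)$ are of the claimed form. This forces $E(n)(Q_{n,a(z)}(M))=\bigoplus_i Q_{n-1,a(z)/(z-q^i)}(E(n)(Q_{n,a(z)}(M)))$, and summing over $a(z)$ gives $E(n)=\bigoplus_i E_i(n)$. The argument for $F(n)$ is symmetric, using the induction half of Proposition \ref{Res}(2) and the dual standard-filtration statement.

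Finally, $(1)$: exactness of $E_i(n)$ and $F_i(n)$ is immediate as each is a direct sum of compositions of the exact projection functors $Q$ with the exact $E(n)$ or $F(n)$. For biadjointness, the decomposition of $(4)$ combined with the biadjointness of $(E(n),F(n))$ from Proposition \ref{leftadjunction} yields an adjunction restricted block by block: $E_i(n)$ sends the block $Q_{n,a(z)(z-q^i)}$ to $Q_{n-1,a(z)}$ while $F_i(n)$ sends $Q_{n-1,a(z)}$ to $Q_{n,a(z)(z-q^i)}$, so the unit and counit of the biadjunction $(E(n),F(n))$ restrict to give biadjointness of $(E_i(n),F_i(n))$. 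The main obstacle I anticipate is the ``correct form of eigenvalues'' claim in $(4)$, which is what links the purely $\calO$-categorical decomposition to the combinatorics of residues modulo $e$; once that is in hand, the rest is essentially formal.
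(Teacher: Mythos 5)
Your proof is correct and follows essentially the same route as the paper: part (2) via indecomposability of $\Delta(\lam)$ plus Lemma \ref{Specht} and Proposition \ref{hv}(2); part (3) via the Pieri rule from Proposition \ref{Res}(2) and the recursion $a_\lam(z)=a_\mu(z)(z-q^{\res(\lam/\mu)})$; part (4) from (3); and part (1) from the construction and Proposition \ref{leftadjunction}. The only noticeable difference is that the paper dispatches part (4) with the one-line remark that it ``follows from part (3) and the Pieri rule,'' whereas you spell out the missing step --- namely that the Grothendieck-group computation forces the projections $Q_{n-1,b(z)}$ with $b(z)\notin\{a(z)/(z-q^i)\}$ to vanish on the actual modules, because a module whose class vanishes in a block is genuinely zero there --- which is a worthwhile clarification of the argument rather than a departure from it.
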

\begin{proof}
(1) This is by construction and by Proposition \ref{leftadjunction}.

(2) Since a standard module is indecomposable, the element $D_n(z)$
has a unique eigenvalue on $\Delta(\lam)$. By Lemma \ref{Specht}
this eigenvalue is the same as the eigenvalue of $C_n(z)$ on
$S_\lam$.

(3) Let us prove the equality for $E_i(n)$. The Pieri rule for the
group $B_n(l)$ together with Proposition \ref{Res}(2) yields
\begin{equation}\label{Pieri}
E(n)([\Delta(\lam)])=\sum_{|\lam/\mu|=1}[\Delta(\mu)],\quad
F(n)([\Delta(\lam)])=\sum_{|\mu/\lam|=1}[\Delta(\mu)].
\end{equation}
So we have
\begin{eqnarray*}
E_i(n)([\Delta(\lam)])&=&\bigoplus_{a(z)\in\C[z]}Q_{n-1,a(z)/(z-q^i)}(E(n)(Q_{n,a(z)}([\Delta(\lam)])))\\
&=&Q_{n-1,a_\lam(z)/(z-q^i)}(E(n)(Q_{n,a_\lam(z)}([\Delta(\lam)])))\\
&=&Q_{n-1,a_\lam(z)/(z-q^i)}(E(n)([\Delta(\lam)]))\\
&=&Q_{n-1,a_\lam(z)/(z-q^i)}(\sum_{|\lam/\mu|=1}[\Delta(\mu)])\\
&=&\sum_{\res(\lam/\mu)=i}[\Delta(\mu)].
\end{eqnarray*}
The last equality follows from the fact that for any $l$-partition
$\mu$ such that $|\lam/\mu|=1$ we have
$a_\lam(z)=a_\mu(z)(z-q^{\res(\lam/\mu)})$. The proof for $F_i(n)$
is similar.

(4) It follows from part (3) and (\ref{Pieri}).
\end{proof}

\begin{cor}\label{rep}
Under the isomorphism $\theta$ in (\ref{Fock}) the operators $E_i$
and $F_i$ on $K(\calO_{\bfh,\N})$ go respectively to the operators
$e_i$ and $f_i$ on $\mathcal{F}_\mathbf{s}$. When $i$ runs over
$\Z/e\Z$ they yield an action of $\widehat{\Lie{sl}}_e$ on
$K(\calO_{\bfh,\N})$ such that $\theta$ is an isomorphism of
$\widehat{\Lie{sl}}_e$-modules.
\end{cor}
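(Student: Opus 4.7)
The plan is to verify the statement by directly comparing the action of $E_i$, $F_i$ on the basis $\{[\Delta(\lam)]\}$ of $K(\calO_{\bfh,\N})$ with the action of $e_i$, $f_i$ on the basis $\{\lam\}$ of $\calF_\bfs$, and then transporting the $\widehat{\Lie{sl}}_e$-module structure across the isomorphism $\theta$.

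First I would recall that $\theta$ is defined in (\ref{Fock}) precisely by $[\Delta(\lam)]\mapsto\lam$, so $\{[\Delta(\lam)]:\lam\in\calP_{n,l},\,n\in\N\}$ is mapped bijectively to the basis of $\calF_\bfs$ given by all $l$-partitions. Next, Proposition \ref{dv}(3) computes
\[
E_i([\Delta(\lam)])=\sum_{\res(\lam/\mu)=i}[\Delta(\mu)],\qquad F_i([\Delta(\lam)])=\sum_{\res(\mu/\lam)=i}[\Delta(\mu)],
\]
where the sums run over $l$-partitions $\mu$ with $|\lam/\mu|=1$, respectively $|\mu/\lam|=1$ (the condition $|\lam/\mu|=1$ is forced by Proposition \ref{dv}(4) combined with (\ref{Pieri})). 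On the other hand the formulas (\ref{fockei}) give exactly
\[
e_i(\lam)=\sum_{|\lam/\mu|=1,\res(\lam/\mu)=i}\mu,\qquad f_i(\lam)=\sum_{|\mu/\lam|=1,\res(\mu/\lam)=i}\mu.
\]
Therefore $\theta\circ E_i=e_i\circ\theta$ and $\theta\circ F_i=f_i\circ\theta$ as linear endomorphisms.

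For the second assertion, since $\calF_\bfs$ carries an integrable $\widehat{\Lie{sl}}_e$-action with Chevalley generators $e_i,f_i$, we can transport this structure to $K(\calO_{\bfh,\N})$ via $\theta^{-1}$. By the preceding paragraph the transported $e_i$ and $f_i$ coincide with $E_i$ and $F_i$, so the claim that $\{E_i,F_i\}_{i\in\Z/e\Z}$ generate an $\widehat{\Lie{sl}}_e$-action on $K(\calO_{\bfh,\N})$ satisfying the defining relations is automatic from the corresponding fact in $\calF_\bfs$, and $\theta$ is an isomorphism of $\widehat{\Lie{sl}}_e$-modules by construction.

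The argument is essentially routine once Proposition \ref{dv} is available; there is no real obstacle. The only mild subtlety is to make sure that the residue convention in Section \ref{ss: fock} matches the residue appearing in the eigenvalues $a_\lam(z)=\prod_{v\in\Upsilon_\lam}(z-q^{\res(v)})$ used to define $E_i$ and $F_i$, which is immediate from the definitions since in both places $\res((a,b,j))=b-a+s_j\in\Z/e\Z$ and $q=\exp(2\pi\sqrt{-1}/e)$.
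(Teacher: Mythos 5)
Your proof is correct and is exactly the argument the paper intends: Corollary~\ref{rep} is derived by matching Proposition~\ref{dv}(3) against the defining formulas~(\ref{fockei}) for the Fock space action, then transporting the integrable $\widehat{\Lie{sl}}_e$-module structure across $\theta$. You have simply spelled out the details that the paper's one-line proof leaves implicit.
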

\begin{proof}
This is clear from Proposition \ref{dv}$(3)$ and from
(\ref{fockei}).
\end{proof}

\section{$\widehat{\Lie{sl}}_e$-categorification}\label{categorification}

In this section, we construct an
$\widehat{\Lie{sl}}_e$-categorification on the category
$\calO_{\bfh,\N}$ under some mild assumption on the parameter $\bfh$
(Theorem \ref{thm:categorification}).

\subsection{}\label{defcategorification}

Recall that we put $q=\exp(\frac{2\pi\sqrt{-1}}{e})$ and $P$ denotes
the weight lattice. Let $\mathcal{C}$ be a $\C$-linear artinian
abelian category. For any functor $F:\mathcal{C}\ra\mathcal{C}$ and
any $X\in\End(F)$, the generalized eigenspace of $X$ acting on $F$
with eigenvalue $a\in\C$ will be called the $a$-eigenspace of $X$ in
$F$. By \cite[Definition 5.29]{R2} an
\emph{$\widehat{\Lie{sl}}_e$-categorification} on $\mathcal{C}$ is
the data of
\begin{itemize}
\item[(a)] an adjoint pair $(U,V)$ of exact functors
$\mathcal{C}\ra\mathcal{C}$,
\item[(b)] $X\in\End(U)$ and $T\in\End(U^2)$,
\item[(c)] a decomposition $\mathcal{C}=\bigoplus_{\tau\in
P}\mathcal{C}_\tau$.
\end{itemize}
such that, set $U_i$ (resp. $V_i$) to be the $q^i$-eigenspace of $X$
in $U$ (resp. in $V$)\footnote{Here $X$ acts on $V$ via the
isomorphism $\End(U)\cong\End(V)^{op}$ given by adjunction, see
\cite[Section 4.1.2]{CR} for the precise definition.} for $i\in\Z/e\Z$, we have
\begin{itemize}
\item[(1)] $U=\bigoplus_{i\in\Z/e\Z}U_i$,
\item[(2)] the endomorphisms $X$ and $T$ satisfy
\begin{eqnarray}
&(1_{U}T)\circ(T1_{U})\circ(1_{U}T)=(T1_{U})\circ(1_{U}T)\circ(T1_{U}),\nonumber\\
&(T+1_{U^2})\circ(T-q1_{U^2})=0,\label{affineHeckerelation}\\
&T\circ(1_{U}X)\circ T=qX1_{U},\nonumber
\end{eqnarray}
\item[(3)] the action of $e_i=U_i$, $f_i=V_i$ on $K(\mathcal{C})$
with $i$ running over $\Z/e\Z$ gives an integrable representation of
$\widehat{\Lie{sl}}_e$.
\item[(4)] $U_i(\mathcal{C}_\tau)\subset \mathcal{C}_{\tau+\al_i}$ and $V_i(\mathcal{C}_\tau)\subset
\mathcal{C}_{\tau-\al_i}$,
\item[(5)] $V$ is isomorphic to a left adjoint of $U$.
\end{itemize}

\subsection{}\label{ss:XT}

We construct a $\sle$-categorification on $\calO_{\bfh,\N}$ in the
following way. The adjoint pair will be given by $(E,F)$. To
construct the part (b) of the data we need to go back to Hecke
algebras. Following \cite[Section $7.2.2$]{CR} let $X^{\sss\Hecke}$ be the
endomorphism of $E^{\sss\Hecke}$ given on $E^{\sss\Hecke}(n)$ as the
multiplication by the Jucy-Murphy element $J_{n-1}$. Let $T^{\sss\Hecke}$
be the endomorphism of $(E^{\sss\Hecke})^2$ given on $E^{\sss\Hecke}(n)\circ
E^{\sss\Hecke}(n-1)$ as the multiplication by the element $T_{n-1}$ in
$\Hecke_{\mathbf{q},n}$. The endomorphisms $X^{\sss\Hecke}$ and $T^{\sss\Hecke}$
satisfy the relations (\ref{affineHeckerelation}). Moreover the
$q^i$-eigenspace of $X^{\sss\Hecke}$ in $E^{\sss\Hecke}$ and $F^{\sss\Hecke}$ gives
respectively the $i$-restriction functor $E^{\sss\Hecke}_i$ and the
$i$-induction functor $F^{\sss\Hecke}_i$ for any $i\in\Z/e\Z$.

By Theorem \ref{iso} we have an isomorphism $\KZ\circ E\cong
E^{\sss\Hecke}\circ\KZ$. This yields an isomorphism
$$\End(\KZ\circ E)\cong \End(E^{\sss\Hecke}\circ\KZ).$$ By Proposition \ref{standard}(1) the functor $E$ maps projective objects to projective ones, so Lemma
\ref{fullyfaithful} applied to
$\calO_1=\calO_2=\calO_{\bfh,\N}$ and $K=L=E$ yields an isomorphism
$$\End(E)\cong\End(\KZ\circ E).$$ Composing it with the isomorphism
above gives a ring isomorphism
\begin{equation}\label{sigmae}
\sigma_{E}:\End(E)\overset{\sim}\ra\End(E^{\sss\Hecke}\circ\KZ).
\end{equation}
Replacing $E$ by $E^2$ we get another isomorphism
$$\sigma_{E^2}:\End(E^2)\overset{\sim}\ra\End((E^{\sss\Hecke})^2\circ\KZ).$$
The data of $X\in\End(E)$ and $T\in\End(E^2)$ in our
$\widehat{\Lie{sl}}_e$-categorification on $\calO_{\bfh,\N}$ will be
provided by
\begin{equation*}
X=\sigma^{-1}_{E}(X^{\sss\Hecke} 1_{\KZ}),\quad
T=\sigma^{-1}_{E^2}(T^{\sss\Hecke} 1_{\KZ}).
\end{equation*}

Finally, the part (c) of the data will be given by the block
decomposition of the category $\calO_{\bfh,\N}$. Recall from
\cite[Theorem 2.11]{LM} that the block decomposition of the category
$\Hecke_{\mathbf{q},\N}\modu$ yields
$$\Hecke_{\mathbf{q},\N}\modu=\bigoplus_{\tau\in P}(\Hecke_{\mathbf{q},\N}\modu)_\tau,$$
where $(\Hecke_{\mathbf{q},\N}\modu)_\tau$ is the subcategory
generated by the composition factors of the Specht modules $S_\lam$
with $\lam$ running over $l$-partitions of weight $\tau$. By
convention $(\Hecke_{\mathbf{q},\N}\modu)_\tau$ is zero if such
$\lam$ does not exist. By Lemma \ref{lem:center} the functor $\KZ$
induces a bijection between the blocks of the category
$\calO_{\bfh,\N}$ and the blocks of $\Hecke_{\mathbf{q},\N}\modu$.
So the block decomposition of $\calO_{\bfh,\N}$ is
\begin{equation*}
\calO_{\bfh,\N}=\bigoplus_{\tau\in P}(\calO_{\bfh,\N})_\tau,
\end{equation*}
where $(\calO_{\bfh,\N})_\tau$ is the block corresponding to
$(\Hecke_{\mathbf{q},\N}\modu)_\tau$ via $\KZ$.

\subsection{}\label{ss:categorification}
Now we prove the following theorem.
\begin{thm}\label{thm:categorification}
The data of
\begin{itemize}
\item[(a)] the adjoint pair $(E,F)$,
\item[(b)] the endomorphisms $X\in\End(E)$, $T\in\End(E^2)$,
\item[(c)] the decomposition $\calO_{\bfh,\N}=\bigoplus_{\tau\in P}(\calO_{\bfh,\N})_\tau$
\end{itemize}
is a $\sle$-categorification on $\calO_{\bfh,\N}$.
\end{thm}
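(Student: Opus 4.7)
The plan is to verify conditions (1)--(5) of Section~\ref{defcategorification} one by one, relying as much as possible on the Hecke-side analogue, which is already known (see \cite[Section~7.2.2]{CR}), together with the ring isomorphism $\sigma$ of (\ref{sigmae}).

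First I would dispatch the easy items. Condition (5) is immediate from Proposition~\ref{leftadjunction}, which gives the biadjointness of $(E,F)$. Condition (3) follows from Corollary~\ref{rep}: the induced operators of $E_i$ and $F_i$ on $K(\calO_{\bfh,\N})$ are identified with $e_i$ and $f_i$ acting on the Fock space $\mathcal{F}_{\bfs}$, which is an integrable $\sle$-module. For condition (4), the block decomposition in Section~\ref{ss:XT} is transported from $\Hecke_{\bfq,\N}\modu$ via $\KZ$, and the Hecke-side blocks are indexed by weights of Specht modules. Since $\KZ([\Delta(\lam)]) = [S_\lam]$ by Lemma~\ref{Specht}, the weight of the block containing $\Delta(\lam)$ agrees with $\wt(\lam)$ from (\ref{wt}); the shifts $E_i(\calO_{\bfh,\N})_\tau \subset (\calO_{\bfh,\N})_{\tau+\al_i}$ and $F_i(\calO_{\bfh,\N})_\tau \subset (\calO_{\bfh,\N})_{\tau-\al_i}$ then follow from Proposition~\ref{dv}(3) and the explicit formula $\wt(\mu) - \wt(\lam) = -\al_i$ whenever $\res(\mu/\lam) = i$.

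Condition (2), the affine Hecke relations for $X$ and $T$, is a transport-of-structure argument. The relations (\ref{affineHeckerelation}) hold for $X^{\sss\Hecke}$ and $T^{\sss\Hecke}$ acting on $E^{\sss\Hecke}$ by \cite[Section~7.2.2]{CR}. Since $X$ and $T$ are defined by $\sigma_E^{-1}(X^{\sss\Hecke} 1_{\KZ})$ and $\sigma_{E^2}^{-1}(T^{\sss\Hecke} 1_{\KZ})$, it suffices to observe that the isomorphisms $\sigma_{E^k}$ for $k = 1, 2, 3$ are ring homomorphisms compatible with horizontal composition. This compatibility is a direct consequence of Lemma~\ref{fullyfaithful}, applied iteratively to the functors $E$, $E^2$ and $E^3$, all of which map projectives to projectives by Proposition~\ref{Res}(1).

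The main obstacle is condition (1), namely that the $q^i$-generalized eigenspace $E_i'$ of $X$ in $E$ coincides with the functor $E_i$ defined via the central element $D_n(z)$ in Section~\ref{iresdaha}. The idea is to compare both decompositions after applying $\KZ$. On the Hecke side, the $q^i$-eigenspace of $X^{\sss\Hecke} = J_{n-1}$ on $E^{\sss\Hecke}(n)M$ is by definition $E_i^{\sss\Hecke}(n) M$, and because $C_n(z) = C_{n-1}(z)\cdot(z - J_{n-1})$, this eigenspace agrees with the direct sum over $a(z) \in \C(z)$ of $P_{n-1, a(z)/(z - q^i)} \circ E^{\sss\Hecke}(n) \circ P_{n, a(z)}$. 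On the DAHA side, the image of $D_n(z)$ under $\gamma$ is $C_n(z)$, so Lemma~\ref{lem:center}(2) ensures that $Q_{n,a(z)}$ and $P_{n,a(z)}$ intertwine through $\KZ$. Combined with Theorem~\ref{iso}, this gives $\KZ \circ E_i \cong E_i^{\sss\Hecke} \circ \KZ$ and also $\KZ \circ E_i' \cong E_i^{\sss\Hecke} \circ \KZ$ (the latter by functoriality of $\sigma_E$ with respect to taking eigenspaces of the endomorphism $X$). The identification $E_i \cong E_i'$ then follows by applying Lemma~\ref{fullyfaithful} (with $K = E_i$, $L = E_i'$): both functors are exact and map projectives to projectives, so the canonical morphism between them that induces the identity after composing with $\KZ$ must itself be an isomorphism. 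Once this is in hand, the decomposition $E = \bigoplus_i E_i$ follows from Proposition~\ref{dv}(4), completing the proof.
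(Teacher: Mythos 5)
Your proposal is correct and follows essentially the same strategy as the paper: conditions (3), (4), (5) are dispatched exactly as in the paper's proof, condition (2) is a transport-of-structure via $\sigma_E$ and $\sigma_{E^2}$ (the paper is even terser than you here, so your explicit mention of compatibility with horizontal composition is a welcome elaboration), and the real content is the identification of the $q^i$-eigenspace of $X$ with the functor $E_i$ of Definition~\ref{def}.

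On that last point your route differs in presentation from the paper's, and is slightly less tight. The paper decomposes $X\in\End(E)$ as a \emph{matrix} $\sum_{i,j}X_{ij}$ along the a~priori known decomposition $E=\bigoplus_i E_i$, transfers it via the ring isomorphism $\sigma_E$ to the corresponding decomposition of $X^{\sss\Hecke}1_{\KZ}$, reads off that the off-diagonal blocks vanish and that $X_{ii}-q^i$ is nilpotent, and concludes immediately that $E_i$ \emph{equals} the $q^i$-eigenspace of $X$. Your argument instead introduces $E_i'$ (the $q^i$-eigenspace of $X$) and shows both $\KZ\circ E_i$ and $\KZ\circ E_i'$ are isomorphic to $E_i^{\sss\Hecke}\circ\KZ$, then invokes Lemma~\ref{fullyfaithful} to deduce $E_i\cong E_i'$. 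Two points should be made precise. First, the phrase \emph{``functoriality of $\sigma_E$ with respect to taking eigenspaces''} needs to be unpacked: the correct statement is that $\sigma_E$ is a ring isomorphism sending $X$ to $X^{\sss\Hecke}1_{\KZ}$, so it carries the spectral idempotents of $X$ (which are polynomials in $X$) to those of $X^{\sss\Hecke}1_{\KZ}$, and since $\sigma_E$ is built from the functor isomorphism $\phi_E$, the images of these idempotents cut out the corresponding eigensubfunctors object-by-object. Second, an abstract isomorphism $E_i\cong E_i'$ does not by itself give condition~(1), which requires $E=\bigoplus_{i\in\Z/e\Z}E_i'$, i.e.\ that $X$ has \emph{no} generalized eigenvalues outside $\{q^i\}$. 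You can close this either by a dimension count (since $E=\bigoplus_i E_i=\bigoplus_a E_a'$ and $E_i\cong E_i'$ force $E_a'=0$ for $a\notin\{q^i\}$), or by noting that for such $a$ one has $\KZ\circ E_a'=0$ and then applying Lemma~\ref{fullyfaithful} to conclude $\End(E_a')=0$, hence $E_a'=0$. The paper's block-matrix formulation sidesteps this entirely, which is why it is slightly cleaner. Finally, you omit the remark that the $q^i$-eigenspace of $X$ in $F$ is $F_i$; the paper handles this in one line ``by adjunction,'' and you should do the same.
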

\begin{proof}
First, we prove that for any $i\in\Z/e\Z$ the $q^i$-generalized
eigenspaces of $X$ in $E$ and $F$ are respectively the
$i$-restriction functor $E_i$ and the $i$-induction functor $F_i$ as
defined in (\ref{ireso}).

Recall from Proposition \ref{hv}(4) and Proposition \ref{dv}(4) that
we have $$E=\bigoplus_{i\in\Z/e\Z}E_i\quad\text{ and }\quad
E^{\sss\Hecke}=\bigoplus_{i\in\Z/e\Z}E^{\sss\Hecke}_i.$$ By the proof of
Proposition \ref{isoi} we see that any isomorphism $$\KZ\circ
E\cong E^{\sss\Hecke}\circ\KZ$$ restricts to an isomorphism $\KZ\circ
E_i\cong E^{\sss\Hecke}_i\circ\KZ$ for each $i\in\Z/e\Z$. So the
isomorphism $\sigma_E$ in (\ref{sigmae}) maps $\Hom(E_i,E_j)$ to
$\Hom(E_i^{\sss\Hecke}\circ\KZ,E_j^{\sss\Hecke}\circ\KZ)$. Write
$$X=\sum_{i,j\in\Z/e\Z}X_{ij},\quad X^{\sss\Hecke}
1_{\KZ}=\sum_{i,j\in\Z/e\Z}(X^{\sss\Hecke} 1_{\KZ})_{ij}$$ with
$X_{ij}\in\Hom(E_i, E_j)$ and $(X^{\sss\Hecke}
1_{\KZ})_{ij}\in\Hom(E_i^{\sss\Hecke}\circ\KZ,E_j^{\sss\Hecke}\circ\KZ)$. We
have $$\sigma_E(X_{ij})=(X^{\sss\Hecke} 1_{\KZ})_{ij}.$$ Since
$E^{\sss\Hecke}_i$ is the $q^i$-eigenspace of $X^{\sss\Hecke}$ in $E^{\sss\Hecke}$, we
have $(X^{\sss\Hecke} 1_{\KZ})_{ij}=0$ for $i\neq j$ and $(X^{\sss\Hecke}
1_{\KZ})_{ii}-q^i$ is nilpotent for any $i\in\Z/e\Z$. Since
$\sigma_{E}$ is an isomorphism of rings, this implies that
$X_{ij}=0$ and $X_{ii}-q^i$ is nilpotent in $\End(E)$. So $E_i$ is
the $q^i$-eigenspace of $X$ in $E$. The fact that $F_i$ is the
$q^i$-eigenspace of $X$ in $F$ follows from adjunction.

Now, let us check the conditions (1)--(5):

(1) It is given by Proposition \ref{dv}(4).

(2) Since $X^{\sss\Hecke}$ and $T^{\sss\Hecke}$ satisfy relations in
(\ref{affineHeckerelation}), the endomorphisms $X$ and $T$ also
satisfy them. Because these relations are preserved by ring
homomorphisms.

(3) It follows from Corollary \ref{rep}.

(4) By the definition of $(\calO_{\bfh,\N})_\tau$ and Lemma
\ref{Specht}, the standard modules in $(\calO_{\bfh,\N})_\tau$ are
all the $\Delta(\lam)$ such that $\wt(\lam)=\tau$. By $(\ref{wt})$
if $\mu$ is an $l$-partition such that $\res(\lam/\mu)=i$ then
$\wt(\mu)=\wt(\lam)+\al_i.$ Now, the result follows from
(\ref{Pierii}).

(5) This is Proposition \ref{leftadjunction}.
\end{proof}

\section{Crystals}\label{s:crystal}
Using the $\sle$-categorification in Theorem
\ref{thm:categorification} we construct a crystal on
$\calO_{\bfh,\N}$ and prove that it coincides with the crystal of
the Fock space $\mathcal{F}_\mathbf{s}$ (Theorem \ref{thm:main}).

\subsection{}\label{defcrystal}

A \emph{crystal} (or more precisely, an
$\widehat{\Lie{sl}}_e$-crystal) is a set $B$ together with maps
$$\wt: B\ra P,\quad \tilde{e}_i, \tilde{f}_i: B\ra B\sqcup
\{0\},\quad \epsilon_i,\varphi_i: B\ra
\mathbb{Z}\sqcup\{-\infty\},$$ such that
\begin{itemize}
\item $\varphi_i(b)=\epsilon_{i}(b)+\pair{\al\spcheck_i,\wt(b)}$,
\item if $\tilde{e}_ib\in B$, then
$\wt(\tilde{e}_ib)=\wt(b)+\al_i,\quad
\epsilon_i(\tilde{e}_ib)=\epsilon_i(b)-1,\quad
\varphi_i(\tilde{e}_ib)=\varphi_i(b)+1$,
\item if $\tilde{f}_ib\in B$, then
$\wt(\tilde{f}_ib)=\wt(b)-\al_i,\quad
\epsilon_i(\tilde{f}_ib)=\epsilon_i(b)+1,\quad
\varphi_i(\tilde{f}_ib)=\varphi_i(b)-1$,
\item let $b, b'\in B$, then $\tilde{f}_ib=b'$ if and only if
$\tilde{e}_ib'=b$,
\item if $\varphi_i(b)=-\infty$, then $\tilde{e}_ib=0$ and
$\tilde{f}_ib=0$.
\end{itemize}

Let $V$ be an integrable $\widehat{\Lie{sl}}_e$-module. For any
nonzero $v\in V$ and any $i\in\Z/e\Z$ we set
$$l_i(v)=\max\{l\in\N:\,e_i^{l}v\neq 0\}.$$ Write $l_i(0)=-\infty$.
For $l\geqs 0$ let $$V_i^{<l}=\{v\in V:\,l_i(v) < l\}.$$ A weight
basis of $V$ is a basis $B$ of $V$ such that each element of $B$ is
a weight vector. Following A. Berenstein and D. Kazhdan (cf.
\cite[Definition 5.30]{BK}), a \emph{perfect basis} of $V$ is a
weight basis $B$ together with maps $\tilde{e}_i,\tilde{f}_i: B\ra
B\sqcup\{0\}$ for $i\in \Z/e\Z$ such that
\begin{itemize}
\item for $b, b'\in B$ we have
$\tilde{f}_ib=b'$ if and only if $\tilde{e}_ib'=b,$
\item we have $\tilde{e}_i(b)\neq 0$ if and only if $e_i(b)\neq 0$,
\item if $e_i(b)\neq 0$ then we have
\begin{equation}\label{perf}
e_i(b)\in\C^\ast\tilde{e}_i(b)+V_i^{<l_i(b)-1}.
\end{equation}
\end{itemize}
We denote it by $(B,\tilde{e}_i, \tilde{f}_i)$. For such a basis let
$\mathrm{wt}(b)$ be the weight of $b$, let $\epsilon_i(b)=l_i(b)$
and let
$$\varphi_i(b)=\epsilon_i(b)+\pair{\al_i\spcheck, \mathrm{wt}(b)}$$
for all $b\in B$. The data
\begin{equation}\label{crystaldata}
(B,\wt,\tilde{e}_i,\tilde{f}_i,\epsilon_i,\varphi_i)
\end{equation} is a crystal. We will
always attach this crystal structure to
$(B,\tilde{e}_i,\tilde{f}_i)$. We call $b\in B$ a primitive element
if $e_i(b)=0$ for all $i\in\Z/e\Z$. Let $B^+$ be the set of
primitive elements in $B$. Let $V^+$ be the vector space spanned by
all the primitive vectors in $V$. The following lemma is \cite[Claim
$5.32$]{BK}.
\begin{lemme}\label{basis}
For any perfect basis $(B,\tilde{e}_i,\tilde{f}_i)$ the set $B^+$ is
a basis of $V^+$.
\end{lemme}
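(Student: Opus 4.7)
My plan is to check three things: $B^+ \subseteq V^+$, linear independence of $B^+$, and the spanning statement. Linear independence is automatic from $B^+ \subseteq B$, and the inclusion $B^+ \subseteq V^+$ is immediate from the second defining condition of a perfect basis ($\tilde{e}_i(b) \neq 0$ iff $e_i(b) \neq 0$): elements of $B^+$ are annihilated by every $e_i$. So the real content is the spanning statement.

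The technical heart is the following iterated version of (\ref{perf}), which I would establish first: for any $b \in B$ and any $i \in \Z/e\Z$ with $l_i(b) = M$, one has
\[
e_i^M(b) = \mu_b \, \tilde{e}_i^M(b), \quad \mu_b \in \C^\ast.
\]
I would prove this by induction on $M$. The case $M = 0$ is trivial. For the induction step, (\ref{perf}) gives $e_i(b) = \lambda_b\, \tilde{e}_i(b) + r_b$ with $r_b \in V_i^{<M-1}$, so $e_i^{M-1}(r_b) = 0$ and $e_i^M(b) = \lambda_b\, e_i^{M-1}(\tilde{e}_i(b))$. The crystal axioms attached to the perfect basis force $l_i(\tilde{e}_i(b)) = M-1$ (one can also read this off directly from the identity above, since otherwise the right-hand side would sit in $V_i^{<M-1}$), so the induction hypothesis applies to $\tilde{e}_i(b)$ and yields the claim.

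With this identity in hand, I would argue the spanning statement directly. Given a nonzero $v \in V^+$, write $v = \sum_{b \in B} c_b\, b$ uniquely with finite support $S$, and suppose for contradiction that some $b^\ast \in S$ is not primitive. Then $l_i(b^\ast) \geq 1$ for some $i$; fix such an $i$, set $M = \max_{b \in S} l_i(b) \geq 1$, and let $S_M = \{b \in S : l_i(b) = M\}$. Since $v \in V^+$ we have $e_i(v) = 0$ and hence $e_i^M(v) = 0$. Basis elements $b$ with $l_i(b) < M$ satisfy $e_i^M(b) = 0$, so the identity above yields
\[
0 = e_i^M(v) = \sum_{b \in S_M} c_b\, \mu_b\, \tilde{e}_i^M(b).
\]
The map $\tilde{e}_i$ is injective on its domain (from $\tilde{f}_i \tilde{e}_i b = b$), hence so is $\tilde{e}_i^M$, so the vectors $\tilde{e}_i^M(b)$ for $b \in S_M$ are distinct elements of $B$. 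Their linear independence forces $c_b\, \mu_b = 0$, hence $c_b = 0$ for all $b \in S_M$, contradicting $S_M \subseteq S$.

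The main obstacle I anticipate is the iterated identity $e_i^M(b) \in \C^\ast\, \tilde{e}_i^M(b)$: without upgrading (\ref{perf}) to control multiple applications of $e_i$, one cannot rule out cancellations among the $e_i^M(b)$ for distinct $b \in S_M$, because in a general weight basis distinct elements can have overlapping images under higher powers of $e_i$. Once this identity is in place, the rest of the argument reduces to distinctness of basis vectors.
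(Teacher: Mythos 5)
Your proof is correct, and it follows the same overall strategy as the paper's: expand $v\in V^+$ in the basis $B$, fix $i\in\Z/e\Z$, take the maximal value $M$ of $l_i$ on the support, and use property~(\ref{perf}) together with injectivity of $\tilde{e}_i$ to force the top coefficients to vanish. The genuine difference is the iterated identity. The paper applies $e_i$ only once, obtaining $0=\sum_{j\in J}\zeta_j\eta_j\tilde{e}_i(b_j)+w$ with $w\in V_i^{<l_i-1}$, and then asserts that since $l_i(\tilde{e}_i(b_j))=l_i-1$ this forces $\tilde{e}_i(b_j)=0$ for all $j$. That last step is not immediate from the definitions: it tacitly uses that a nontrivial linear combination of distinct basis elements each of $l_i$-value $l_i-1$ cannot lie in $V_i^{<l_i-1}$, which is not a property of an arbitrary weight basis. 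As you correctly observe, this is precisely what the iterated statement $e_i^{M}(b)\in\C^\ast\tilde{e}_i^{M}(b)$ supplies, and your inductive proof of it is valid (your parenthetical justification that $l_i(\tilde{e}_i(b))=M-1$ is correct, though the cleanest phrasing is that $e_i^{M-1}(\tilde{e}_i(b))=\lambda_b^{-1}e_i^M(b)\neq 0$ while $e_i^M(\tilde{e}_i(b))=\lambda_b^{-1}(e_i^{M+1}(b)-e_i^M(r_b))=0$). By applying $e_i^M$ in one shot, you bypass the subspaces $V_i^{<l}$ entirely at the final step and reduce to linear independence of distinct basis elements, making the argument completely rigorous. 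In short, your route is the same idea but with the implicit gap in the paper's concluding step identified and filled; this is a worthwhile refinement.
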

\begin{proof}
By definition we have $B^+\subset V^+$. Given a vector $v\in V^+$,
there exist $\zeta_1,\ldots, \zeta_r\in\C^\ast$ and distinct
elements $b_1,\ldots,b_r\in B$ such that $v=\sum_{j=1}^r\zeta_jb_j$.
For any $i\in\Z/e\Z$ let $l_i=\max\{l_i(b_j):\,1\leqs j\leqs r\}$
and $J=\{j:\,l_i(b_j)=l_i,\,1\leqs j\leqs r\}$. Then by the third
property of perfect basis there exist $\eta_j\in\C^\ast$ for $j\in
J$ and a vector $w\in V^{<l_i-1}$ such that $0=e_i(v)=\sum_{j\in
J}\zeta_j\eta_j\tilde{e}_i(b_j)+w$. For distinct $j, j'\in J$, we
have $b_j\neq b_{j'}$, so $\tilde{e}_i(b_j)$ and
$\tilde{e}_i(b_{j'})$ are different unless they are zero. Moreover,
since $l_i(\tilde{e}_i(b_j))=l_i-1$, the equality yields that
$\tilde{e}_i(b_j)=0$ for all $j\in J$. So $l_i=0$. Hence $b_j\in
B^+$ for $j=1,\ldots,r$.
\end{proof}

\subsection{}\label{ss:perfectbasis}

Given an $\sle$-categorification on a $\C$-linear artinian abelian
category $\calC$ with the adjoint pair of endo-functors $(U,V)$,
$X\in\End(U)$ and $T\in\End(U^2)$, one can construct a perfect basis
of $K(\calC)$ as follows. For $i\in\Z/e\Z$ let $U_i$, $V_i$ be the
$q^i$-eigenspaces of $X$ in $U$ and $V$. By definition, the action
of $X$ restricts to each $U_i$. One can prove that $T$ also
restricts to endomorphism of $(U_i)^2$, see for example the
beginning of Section 7 in \cite{CR}. It follows that the data $(U_i,
V_i, X, T)$ gives an $\Lie{sl}_2$-categorification on $\calC$ in the
sense of \cite[Section 5.21]{CR}. By \cite[Proposition 5.20]{CR}
this implies that for any simple object $L$ in $\calC$, the object
$\mathrm{head}(U_i(L))$ (resp. $\mathrm{soc}(V_iL)$) is simple
unless it is zero.

Let $B_{\calC}$ be the set of isomorphism classes of simple objects
in $\calC$. As part of the data of the $\sle$-categorification, we
have a decomposition $\calC=\oplus_{\tau\in P}\calC_\tau$. For a
simple module $L\in\calC_\tau$, the weight of $[L]$ in $K(\calC)$ is
$\tau$. Hence $B_{\calC}$ is a weight basis of $K(\calC)$. Now for
$i\in\Z/e\Z$ define the maps
\begin{eqnarray*}
\tilde{e}_i:& B_{\calC}\ra B_{\calC}\sqcup\{0\},\quad &[L]\mapsto [\head (U_iL)],\\
\tilde{f}_i:& B_{\calC}\ra B_{\calC}\sqcup\{0\},\quad &[L]\mapsto
[\soc (V_iL)].
\end{eqnarray*}
\begin{prop}\label{perfectbasis}
The data $(B_\calC,\tilde{e}_i,\tilde{f}_i)$ is a perfect basis of
$K(\calC)$.
\end{prop}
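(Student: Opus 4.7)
The plan is to verify the three defining properties of a perfect basis from Section~\ref{defcrystal} for the data $(B_\calC, \tilde e_i, \tilde f_i)$. Throughout I shall exploit, for each $i \in \Z/e\Z$, the restricted $\Lie{sl}_2$-categorification $(U_i, V_i, X, T)$ on $\calC$ recorded in the text just before the proposition. The two consequences I shall use are: (i) $(U_i, V_i)$ is an adjoint pair, inherited from $(U,V)$ by passage to generalized eigenspaces of $X$; (ii) for any simple $L \in \calC$ the objects $\head(U_i L)$ and $\soc(V_i L)$ are simple whenever nonzero, which is the already-cited \cite[Proposition~5.20]{CR}. That $B_\calC$ is a weight basis is immediate from the decomposition $\calC = \bigoplus_\tau \calC_\tau$ belonging to the categorification datum: a simple object $L \in \calC_\tau$ contributes a basis vector $[L]$ of weight $\tau$ in $K(\calC)$.

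To establish the mutual inversion $\tilde f_i \leftrightarrow \tilde e_i$, I would argue for simple $L, L'$ that
$$\tilde f_i[L] = [L'] \iff \Hom_\calC(L', V_i L) \neq 0 \iff \Hom_\calC(U_i L', L) \neq 0 \iff \tilde e_i[L'] = [L],$$
where (ii) together with the simplicity of $L, L'$ handles the outer equivalences and (i) handles the middle one. The support condition $\tilde e_i[L] \neq 0 \iff e_i[L] \neq 0$ is then immediate, since both sides are equivalent to $U_i L \neq 0$ (using exactness of $U_i$ for the Grothendieck-class side).

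The main obstacle will be the leading-term identity $e_i[L] \in \C^* \tilde e_i[L] + V_i^{<l_i([L])-1}$. I would deduce this from the finer Chuang--Rouquier structural analysis of $U_i L$ in an $\Lie{sl}_2$-categorification: if $L$ is simple with $l_i([L]) = l \geq 1$, then $L' := \tilde e_i L = \head(U_i L) = \soc(U_i L)$ satisfies $l_i([L']) = l-1$, and every other composition factor $M$ of $U_i L$ satisfies $l_i([M]) \leq l - 2$. Passing to Grothendieck classes yields
$$e_i[L] = [U_i L] = c\,[\tilde e_i L] + \sum_{M \not\cong \tilde e_i L} n_M\,[M],$$
with $c \geq 1$ (since $\tilde e_i L$ occurs as the head of $U_i L$) and the remainder sum lying in $V_i^{<l-1}$, which is exactly the required inclusion. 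The delicate point in this last step is locating and citing the correct refinement of \cite[Proposition~5.20]{CR} that controls \emph{all} composition factors of $U_i L$, not merely the head and socle; once that input is in hand, the verification is purely formal.
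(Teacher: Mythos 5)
Your proposal is correct and follows essentially the same route as the paper. The first two conditions are handled identically (adjunction $\Hom(U_iL,L') = \Hom(L,V_iL')$ for mutual inversion; nontriviality of the head of a nonzero object for the support condition). For the leading-term inequality, the paper simply cites \cite[Proposition~5.20(d)]{CR}, which is precisely the refinement you were looking for: it states that for a simple $L$ in an $\Lie{sl}_2$-categorification with $l_i([L])=l$, the class $[U_iL]$ equals a positive multiple of $[\head(U_iL)]$ modulo classes of objects with $l_i$-value at most $l-2$. So the structural facts you sketch (head $=$ socle of $U_iL$, every other composition factor dropping $l_i$ by at least two) are exactly the content of that cited result; no further derivation is needed once it is invoked.
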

\begin{proof}
Fix $i\in \Z/e\Z$. Let us check the conditions in the definition in
order:
\begin{itemize}
\item for two simple modules $L$, $L'\in\calC$, we have
$\tilde{e}_i([L])=[L']$ if and only if
$0\neq\Hom(U_iL,L')=\Hom(L,V_iL'),$ if and only if
$\tilde{f}_i([L'])=[L]$.
\item it follows from the fact that any non trivial module has a non
trivial head.
\item this is \cite[Proposition 5.20(d)]{CR}.
\end{itemize}
\end{proof}

\subsection{}\label{ss:mainresult}

Let $B_{\mathcal{F}_\mathbf{s}}$ be the set of $l$-partitions. In
\cite{JMMO} this set is given a crystal structure. We will call it
the crystal of the Fock space $\mathcal{F}_\mathbf{s}$.

\begin{thm}\label{thm:main}
(1) The set
$$B_{\calO_{\bfh,\N}}=\{[L(\lam)]\in K(\calO_{\bfh,\N}): \lam\in\mathcal{P}_{n,l}, n\in\N\}$$
and the maps
\begin{eqnarray*}
\tilde{e}_i:& B_{\calO_{\bfh,\N}}\ra B_{\calO_{\bfh,\N}}\sqcup\{0\},\quad &[L]\mapsto [\head (E_iL)],\\
\tilde{f}_i:& B_{\calO_{\bfh,\N}}\ra
B_{\calO_{\bfh,\N}}\sqcup\{0\},\quad &[L]\mapsto [\soc (F_iL)].
\end{eqnarray*}
define a crystal structure on $B_{\calO_{\bfh,\N}}$.

(2) The crystal $B_{\calO_{\bfh,\N}}$ given by (1) is isomorphic to
the crystal $B_{\mathcal{F}_\mathbf{s}}$.
\end{thm}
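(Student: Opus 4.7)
The plan is as follows. For part (1), I would directly apply Proposition \ref{perfectbasis} to the $\sle$-categorification on $\calO_{\bfh,\N}$ constructed in Theorem \ref{thm:categorification}. This immediately yields that $B_{\calO_{\bfh,\N}}$ equipped with the head/socle operators $\tilde{e}_i$, $\tilde{f}_i$ is a perfect basis of $K(\calO_{\bfh,\N})$, hence carries the crystal structure described by the data (\ref{crystaldata}) in Section \ref{defcrystal}. This is the claimed crystal.

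For part (2), my plan is to recognize both the categorical crystal $B_{\calO_{\bfh,\N}}$ and the combinatorial JMMO crystal $B_{\mathcal{F}_\mathbf{s}}$ as crystals of perfect bases of the same integrable $\sle$-module $\mathcal{F}_\mathbf{s}$, and then invoke a uniqueness principle. The first step is to verify that the Fock basis $\{\lam\}$ equipped with the Jimbo--Misra--Miwa--Okado operators is itself a perfect basis. This amounts to checking the triangularity condition (\ref{perf}): using the explicit formula (\ref{fockei}) for the action of $e_i$ as the sum over all removable $i$-boxes, one shows combinatorially that the JMMO good removal contributes the summand with the strictly largest value of $l_i$, while the other summands lie in $V_i^{<l_i(\lam)-1}$.

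The second step is to note that, via the isomorphism $\theta$ of $\sle$-modules from Corollary \ref{rep}, the categorical basis $B_{\calO_{\bfh,\N}}$ transports to a second perfect basis of $\mathcal{F}_\mathbf{s}$. I would then invoke the Berenstein--Kazhdan uniqueness theorem \cite{BK}: any two perfect bases of an integrable $\sle$-module produce canonically isomorphic crystals, once a weight-preserving bijection between their sets of primitive elements is chosen. By Lemma \ref{basis}, both bases share the same primitive subspace $V^+ \subset \mathcal{F}_\mathbf{s}$, so such bijections exist and the primitive elements have matching cardinalities in each weight.

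The main obstacle will be showing that this abstract isomorphism is precisely the one sending $[L(\lam)] \mapsto \lam$. I would address this by matching primitive elements with their $l$-partition labels: on the $\calO$ side, $[L(\lam)]$ is primitive iff $E_i L(\lam) = 0$ for all $i$, while on the Fock side, $\lam$ is primitive iff it has no removable $i$-box. Using the triangularity of the expression of $[L(\lam)]$ in terms of the $[\Delta(\mu)]$ with respect to the highest-weight partial order, combined with the Pieri-type formula (\ref{Pierii}) for $E_i$ on standard modules, one argues that primitive simples $[L(\lam)]$ correspond to primitive $l$-partitions with the same label $\lam$. Once the primitive elements are identified by label, connectedness of each highest weight component of the crystal graph (obtained by successively applying the $\tilde{f}_i$-operators to primitive elements) extends the bijection to give the desired identification $[L(\lam)] \leftrightarrow \lam$ throughout $B_{\calO_{\bfh,\N}}$.
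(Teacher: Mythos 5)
Your argument for part (1) and the first two steps of part (2) coincide with the paper's proof: apply Proposition \ref{perfectbasis} to the categorification from Theorem \ref{thm:categorification} to get the perfect basis $B_{\calO_{\bfh,\N}}$, recall that the JMMO basis $B_{\mathcal{F}_\mathbf{s}}$ is perfect (the paper cites this as known; you propose to reverify it, which is fine but unnecessary), use Lemma \ref{basis} to see that $B_{\mathcal{F}_\mathbf{s}}^+$ and $B_{\calO_{\bfh,\N}}^+$ are two weight bases of the same primitive subspace, and invoke the Berenstein--Kazhdan theorem. That already completes the proof of the theorem as stated.

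The last paragraph of your proposal, however, is not needed and is on shaky ground. The theorem only asserts that the two crystals are \emph{isomorphic}; it does not claim that the isomorphism is the label-preserving map $[L(\lam)]\mapsto\lam$. The paper accordingly stops once it has a weight-preserving bijection $\psi:B_{\mathcal{F}_\mathbf{s}}^+\to B_{\calO_{\bfh,\N}}^+$ (which exists because both are bases of $V^+$ and thus have equal dimensions in each weight space) and then extends $\psi$ to a module automorphism before applying \cite[Main Theorem 5.37]{BK}. Your attempted identification of primitive simples with primitive $l$-partitions by matching labels would require knowing that $E_i L(\lam)=0$ for all $i$ exactly when $\lam$ is JMMO-primitive, and that both are indexed consistently; the triangular expansion $[L(\lam)]=[\Delta(\lam)]+\sum_{\mu<\lam}d_{\lam\mu}[\Delta(\mu)]$ combined with (\ref{Pierii}) does not immediately yield this, since cancellations in the Grothendieck group can occur and the highest-weight order depends on the parameter $c$. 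So this extra step both goes beyond what the theorem requires and would need a substantially more careful argument if you actually wanted it.
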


\begin{proof}
(1) Applying Proposition \ref{perfectbasis} to the
$\sle$-categorification in Theorem \ref{thm:categorification} yields
that $(B_{\calO_{\bfh,\N}},\tilde{e}_i,\tilde{f}_i)$ is a perfect
basis. So it defines a crystal structure on $B_{\calO_{\bfh,\N}}$ by
(\ref{crystaldata}).

(2) It is known that $B_{\mathcal{F}_\mathbf{s}}$ is a perfect basis
of $\mathcal{F}_\mathbf{s}$. Identify the $\sle$-modules
$\calF_\mathbf{s}$ and $K(\calO_{\bfh,\N})$. By Lemma \ref{basis}
the set $B_{\mathcal{F}_\mathbf{s}}^+$ and $B_{\calO_{\bfh,\N}}^+$
are two weight bases of $\mathcal{F}_\mathbf{s}^+$. So there is a
bijection $\psi: B_{\mathcal{F}_\mathbf{s}}^+\ra
B_{\calO_{\bfh,\N}}^+$ such that $\wt(b)=\wt(\psi(b))$. Since
$\mathcal{F}_\mathbf{s}$ is a direct sum of highest weight simple
$\widehat{\Lie{sl}}_e$-modules, this bijection extends to an
automorphism $\psi$ of the $\widehat{\Lie{sl}}_e$-module
$\mathcal{F}_\mathbf{s}$. By \cite[Main Theorem $5.37$]{BK} any
automorphism of $\mathcal{F}_\mathbf{s}$ which maps
$B_{\mathcal{F}_\mathbf{s}}^+$ to $B_{\calO_{\bfh,\N}}^+$ induces an
isomorphism of crystals $B_{\mathcal{F}_\mathbf{s}}\cong
B_{\calO_{\bfh,\N}}$.
\end{proof}

\begin{rmq}
One can prove that if $n < e$ then a simple module
$L\in\calO_{\bfh,n}$ has finite dimension over $\C$ if and only if
the class $[L]$ is a primitive element in $B_{\calO_{\bfh,\N}}$. In
the case $n=1$, we have $B_n(l)=\mu_l$, the cyclic group, and the
primitive elements in the crystal $B_{\mathcal{F}_\mathbf{s}}$ have
explicit combinatorial descriptions. This yields another proof of
the classification of finite dimensional simple modules of
$H_\bfh(\mu_l)$, which was first given by W. Crawley-Boevey and M. P. Holland. See type $A$ case of \cite[Theorem $7.4$]{CB}.
\end{rmq}

\section*{Acknowledgements}

I would like to thank my advisor E. Vasserot for suggesting me this problem, for his guidance and his patience. I would also like to thank I. Gordon and M. Martino for enlightening discussions. Independently, they also obtained the result on crystals in their recent joint work. I am grateful to D. Yang and I. Losev for pointing out some errors in the preliminary version of this paper.

\end{document}